\newtheorem{thm}{Theorem}[section]
\newtheorem{lemma}[thm]{Lemma}
\newtheorem{cor}[thm]{Corollary}
\newtheorem{prop}[thm]{Proposition}
\theoremstyle{definition}
\newtheorem{definition}[thm]{Definition}
\newtheorem{example}[thm]{Example}
\newtheorem{conjecture}[thm]{Conjecture}
\title{Paper}
\date{September 27, 2012}
\begin{document}

\title[Fractional Dehn Twists]
{Fractional Dehn Twists in Knot Theory and Contact Topology}

\author[Kazez]{William H. Kazez}
\address{Department of Mathematics, University of Georgia, Athens, GA 30602}
\email{will@math.uga.edu}

\author[Roberts]{Rachel Roberts}
\address{Department of Mathematics, Washington University, St. Louis, MO 63130}
\email{roberts@math.wustl.edu}

\keywords{fractional Dehn twist, overtwisted, contact structure, open book decomposition, fibered
link, surface automorphism}

\subjclass[2000]{Primary 57M50; Secondary 53D10.}

\thanks{WHK supported by NSF grant DMS-0073029, RR supported by NSF grant DMS-0312442.}

\begin{abstract} Fractional Dehn twists give a measure of the difference between the relative isotopy class of a homeomorphism of a bordered surface and the Thurston representative of its free isotopy class.  We show how to estimate and compute these invariants.  We discuss the the relationship of our work to stabilization problems in classical knot theory, general open book decompositions, and contact topology.  We include an elementary characterization of overtwistedness for contact structures described by open book decompositions.
\end{abstract}


\maketitle

\section{Introduction}

Given an automorphism $h$ of a bordered surface $S$ that is the identity map on $\partial S$, the fractional Dehn twist coefficient of $h$ with respect to a component $C$ of $\partial S$ is a rational number.  It measures the amount of rotation that takes place about $C$ when isotoping $h$ to its Thurston representative.  See Section~\ref{basics} for precise definitions of $c(h)$.  

This concept was first studied by Gabai and Oertel in \cite{GO}, where it is called the degeneracy slope and was applied to problems related to  Dehn fillings of manifolds containing essential laminations.  It is also used, with different coordinates, by Roberts \cite{Ro1,Ro2} to describe surgeries on fibred knots in which she can construct taut foliations.  Most recently it was used by Honda, Kazez, and Mati\'c \cite{HKM1,HKM2} to quantify the concept of right-veering that arises in contact topology.

Section~\ref{basics} describes techniques for estimating and computing $c(h)$ and gives some examples.  We prove a general result, Theorem~\ref{stabilizedc(h)}, that $c(h)\in[0,1/2]$ if $h$ is a stabilization, and begin to develop properties of the important case when $c(h)=1/2$.

In Section~\ref{pA} we construct a class of examples, motivated by an example of Gabai's \cite{Ga3}, for which $c(h)=1/2$.  The idea is to replace a fibred knot with its $(2,1)$-cable.  This creates a fibred knot with reducible monodromy and fractional Dehn twist coefficient $1/2$.  Following this with surgery on an unknotted curve in the fibre of the $(2,1)$-cable can produce examples of fibred knots with pseudo-Anosov monodromy that also have fractional Dehn twist coefficient 1/2.

 Our interest in producing examples of fibred knots in $S^3$ with fractional Dehn twist coefficient 1/2 arises from  Conjecture~\ref{main1}.  This conjecture states that a knot in $S^3$ with fractional Dehn twist coefficient 1/2 can not be destabilized.  Section 4 includes a brief history of this conjecture.

The relevance of  Conjecture~\ref{main1} in contact topology is discussed in Section~\ref{applications}.  Denote by $\xi$ the contact structure compatible with a pair $(S,h)$.  If true, this conjecture would provide the simplest known counterexamples to a conjecture of Honda, Kazez, and Mati\'c \cite{HKM1} by producing fibred knots in $S^3$ that are not stabilizations, are right-veering, yet are still overtwisted.  The section includes some context for the conjecture of \cite{HKM1}, references to earlier counterexamples for links bounding planar surfaces by Lekili \cite{Lekili}, Lisca \cite{Lisca}, and Ito and Kawamuro \cite{IK}, and some remarks on work of Colin and Honda \cite{CoHo}.  The technique we use for recognizing that a contact structure is overtwisted is completely elementary: we exhibit an overtwisted disk by finding an unknotted, untwisted curve on the Seifert surface of the knot.

We would like to thank  David Gabai and Jennifer Schultens for helpful conversations.

\section{Computing fractional Dehn twist coefficients}\label{basics}

\bigskip

We recall Thurston's classification of surface automorphisms.

\begin{thm}\label{Thurston} \cite{Th, CB} Let $S$ be an oriented hyperbolic surface with geodesic boundary, and let $h\in Aut(S,\partial S)$. Then $h$ is freely isotopic to either 

(1) a pseudo-Anosov homeomorphism $\phi$ that preserves a pair of geodesic laminations $\lambda^s$ and $\lambda^u$,

(2) a periodic homeomorphism $\phi$, in which case there is a hyperbolic metric for which $S$ has geodesic boundary and such that $\phi$ is an isometry of $S$, or

(3) a reducible homeomorphism $h'$ that fixes, setwise, a maximal collection of simple closed geodesic curves $\{C_j\}$ in $S$.
\end{thm}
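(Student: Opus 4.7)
The plan is to follow Thurston's original strategy via the action of $h$ on Teichm\"uller space and its Thurston compactification, which is essentially what is carried out in the cited Casson--Bleiler reference \cite{CB}. The overall architecture is a trichotomy: first decide whether $h$ preserves, up to free isotopy, some isotopy class of essential simple closed curves; if so, we land in case (3); if not, pass to a dynamical analysis on Teichm\"uller space to distinguish cases (1) and (2).

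More concretely, I would first define the \emph{canonical reduction system} of $h$ as the (possibly empty) maximal collection $\{C_j\}$ of disjoint essential simple closed geodesics whose isotopy class is setwise preserved by $h$. Existence and uniqueness of such a system, when nonempty, would need to be established (using, e.g., that the set of reducing classes is closed under taking pairwise disjoint representatives). If this collection is nonempty, we straighten the $C_j$ to geodesics for a chosen hyperbolic metric and free-isotope $h$ to a homeomorphism $h'$ fixing their union setwise, giving case (3). From here one could cut along the $C_j$ and apply the theorem inductively on the pieces, though that refinement is not needed for the statement as given.

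Assume now that no such reduction system exists; equivalently, no nontrivial power of $h$ fixes the isotopy class of any essential simple closed curve. The plan is to consider the induced action of $h$ on Teichm\"uller space $\mathcal{T}(S)$ and on its Thurston compactification $\overline{\mathcal{T}(S)} = \mathcal{T}(S) \cup \mathcal{PML}(S)$, which is homeomorphic to a closed ball. By the Brouwer fixed point theorem $h$ has a fixed point $x \in \overline{\mathcal{T}(S)}$. If $x \in \mathcal{T}(S)$, then $h$ is freely isotopic to an isometry for the hyperbolic structure represented by $x$; since the isometry group of such a surface is finite, this isometry is periodic, giving case (2). If $x \in \mathcal{PML}(S)$, then $x$ is represented by a measured lamination $\lambda^u$ with $h(\lambda^u) = k \lambda^u$ for some $k > 0$; applying the same argument to $h^{-1}$ (and using our no-reduction hypothesis to prevent degeneracies) produces a second invariant measured lamination $\lambda^s$ with stretch factor $1/k$, and one verifies $\lambda^s$ and $\lambda^u$ fill $S$ and are transverse, yielding the pseudo-Anosov representative of case (1).

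The main obstacle, and the technical heart of the argument, is the last step: showing that in the non-reducible case the two fixed points on $\mathcal{PML}(S)$ genuinely produce a transverse pair of filling laminations with reciprocal dilatations, and then upgrading this to an actual pseudo-Anosov homeomorphism $\phi$ (a singular flat structure together with an affine map in the stable/unstable coordinates). One must rule out the possibility that $\lambda^s$ and $\lambda^u$ share leaves or fail to fill, which is exactly where the no-reducing-curve hypothesis is used in an essential and somewhat subtle way; and one must verify $k \neq 1$, ruling out a parabolic-type intermediate case. I would handle these by the standard intersection-number estimates on $\mathcal{ML}(S)$ together with the continuity of geometric intersection number on $\mathcal{PML}(S)$, which together force transversality and $k>1$.
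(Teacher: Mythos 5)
This is a classical result that the paper cites (to Thurston \cite{Th} and Casson--Bleiler \cite{CB}) but does not prove, so there is no ``paper's own proof'' to compare against. Your outline is a reasonable high-level summary of Thurston's original argument via the compactification $\overline{\mathcal{T}(S)}=\mathcal{T}(S)\cup\mathcal{PML}(S)$ and the Brouwer fixed point theorem. A few remarks are in order, though.

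First, your attribution is off: Casson--Bleiler deliberately \emph{avoid} the Teichm\"uller compactification machinery. Their proof develops geodesic laminations, train tracks, and the geometric intersection form directly, and produces the trichotomy by a hands-on analysis of limits of iterated curves rather than by Brouwer applied to a ball compactification. The Teichm\"uller-space route you sketch is closer to Thurston's original announcement and to the Fathi--Laudenbach--Po\'enaru treatment. Both work; but you should not present the compactification argument as what Casson--Bleiler do.

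Second, two gaps in your sketch deserve attention. (a) You work with a bordered surface and the relative mapping class group $Aut(S,\partial S)$, but the compactification argument is most cleanly set up for closed or punctured surfaces. Since the theorem asserts a \emph{free} isotopy (dropping the boundary condition), one should first pass to the free mapping class, and then handle the boundary either by doubling $S$ or by collapsing each boundary component to a puncture and keeping track of how the classification transfers back. This is routine but should be said. (b) Brouwer gives a single fixed point in $\overline{\mathcal{T}(S)}$. If that point lies in $\mathcal{PML}(S)$ you obtain one projectively invariant measured lamination $\lambda^u$ with $h(\lambda^u)=k\lambda^u$; you do not yet have two, and applying Brouwer to $h^{-1}$ is not guaranteed to produce a \emph{different} fixed point. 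The standard way to close this is to show (using the non-reducibility hypothesis) that $\lambda^u$ is filling and uniquely ergodic and $k>1$, and then use north--south dynamics of $h$ on $\mathcal{PML}(S)$ (or the Teichm\"uller geodesic axis) to produce the distinct repelling fixed point $\lambda^s$. You flag this as the ``technical heart,'' which is correct, but the specific issue of distinctness of the two fixed points should be named explicitly, since it is exactly where the argument can look circular if stated carelessly.
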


To avoid overlap in the cases, we refer to a map as reducible only if it is not periodic.  Given a reducible map, splitting $S$ along $\cup_jC_j$ gives a collection of surfaces $S_1,\dots, S_n \subset S$ with geodesic boundary that are permuted by $h'$.  Choose an integer $n_i$ so that $(h')^{n_i}$ maps $S_i$ to itself.  Maximality of $\{C_j\}$ implies that applying Thurston's classification theorem to $(h')^{n_i} \in Aut(S_i, \partial S_i)$ produces either a pseudo-Anosov or periodic representative.

Our primary focus will be in describing homeomorphisms from the point of view of a single boundary component $C\subset \partial S$ that is fixed pointwise by $h$.  If such a map is reducible, let $S_0$ be the subsurface $S_i$ (as above) of $S$ which contains $C$.  Then $h'(S_0)=S_0$, and we let $\phi_0$ be the pseudo-Anosov or periodic representative of $h'|S_0$.  
With this notation, we make the following definition.  

\begin{definition}\label{Thurstonrep}  A map $\phi \in Aut(S, \partial S)$, freely isotopic to $h$, is called a {\it Thurston representative} of $h$, if it is pseudo-Anosov, periodic, or reducible with $\phi|S_0 =\phi_0$.
\end{definition}

Consider the open book determined by the data $(S,h)$ and the suspension flow of $\phi$.  This flow, when restricted to the component corresponding to $C$ in the complement of the binding, necessarily has periodic orbits.  Let $\gamma$ be one such, and write $$\gamma = p\lambda+q\nu$$ where $\lambda = \partial S$, $\nu$ is the meridian, and $p, q$ are relatively prime integers. The {\sl fractional Dehn twist coefficient} of $h$ with respect to a component $C$ of $\partial S$ \cite{HKM1} is given by $$c(h)=p/q\, .$$

In the context of surgery on knots in $S^3$, the reciprocal quantity, $q/p$, is called the {\sl degeneracy slope} by Gabai~\cite{Ga3}. See also \cite{GO} for motivating work describing the location of cusps from the point of view of a toroidal boundary component in a 3-manifold carrying an essential lamination.  Fractional Dehn twists, though with different coordinates, also play an important role in Roberts' constructions of taut foliations \cite{Ro1,Ro2}.  See \cite{HKM2} for a description of the change in coordinates.

An alternate description of the fractional Dehn twist coefficient, which is meant to emphasize that $h$ looks like a fraction of a Dehn twist has been applied to the boundary of $S$ is given as follows. First extend $h$ by the identity map to a homeomorphism of $F=S \cup (\partial S \times I)$ where the annuli added to the boundary of $S$ are glued along $\partial S \times \{1\}$.  Next, freely isotop $h$ to its Thurston representative, $\phi$, on $S$, and extend this representative across $\partial S \times I$ so that the resulting homeomorphism of $F$ is isotopic to $id \cup h$ relative to $\partial F$. The extension can be chosen to be a shearing map, that is, to be a straight-line homotopy when lifted to the universal cover of $\partial S \times I$ thought of as a subset of the plane.

The periodic orbit $\gamma$ determines a collection of points $\{x_0, \dots, x_{q-1}\} \subset C$ labelled cyclically and ordered compatibly with the induced orientation on $C$.  The shearing homeomorphism that extends $\phi$ across $C \times I$ maps $x_0 \times I$ to an arc that may spiral more than once around $C$ in either direction.  Lifting to the the universal cover and indexing with integers rather than working mod $q$, it must end at some $x_p \times \{1\}\in \partial \tilde S$.  It then follows that $c(h)=p/q$.

We now describe how to compute $c(h)$ with respect to $C$ by computing the action of $h$ on some arcs. Let $\alpha$ be an oriented, properly embedded arc in $S$ starting on $C$.  Define $i_h(\alpha)$ to measure the number of intersections of $\alpha$ and $h(\alpha)$ which occur after their common initial point but in an annular neighborhood of $C$.  

\begin{figure}[ht]
  	\centering
   	\includegraphics[width=1.8truein]{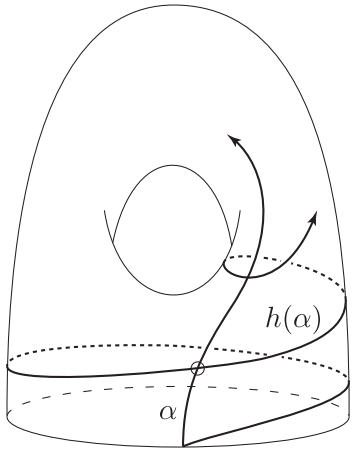} 
	\caption{In this example $i_h(\alpha)=1$.}
	\label{initialintersection}
\end{figure}

Topologically, this means first isotoping the arcs $\alpha$ and $h(\alpha)$, relative to their boundaries, so they intersect minimally. Then  $i_h(\alpha)$ is defined to be a signed count of the number of points, $x$, in the intersection of the interiors of $h(\alpha)$ and $\alpha$ with the property that the union of the initial segments of these arcs, up to $x$, is contained in an annular neighborhood of $C$.

It is also useful to define $i_h(\alpha)$ geometrically by putting the arcs in a canonical position.  We can think of arcs as living in $F=S \cup (\partial S \times I)$ a hyperbolic surface with geodesic boundary together with annuli extending the boundary components.  Denote by $\overline\alpha$ the unique geodesic arc in the free isotopy class of $\alpha$ that is orthogonal to the initial and terminal boundary components of $S$.  Now let $\alpha_0$ and $\alpha_1$ be monotonically spiraling arcs in $\partial S \times I$ such that $\alpha_0 * \overline\alpha*\alpha_1$ is an arc isotopic, relative to its endpoints, to $\alpha$.  Write $h(\alpha)_0 * \overline {h(\alpha)}*h(\alpha)_1$ for the canonical form of $h(\alpha)$.

The geometric description of $i_h(\alpha)$ depends on properties of its Thurs\-ton representative, $\phi$.  In all cases $\overline{\phi(\alpha)} = \overline {h(\alpha)}$.  

If $\phi$ is pseudo-Anosov, it fixes no compact geodesic, and hence $\overline {h(\alpha)}$ and $\overline{\alpha}$ intersect minimally.  It follows that $i_h(\alpha)$ is the number of intersections of the interiors of $h(\alpha)_0$ and $\alpha_0$ counted with sign; namely, 
$$i_h(\alpha) = \langle int \, h(\alpha)_0, int\,\alpha_0\rangle\, .$$

The same formula for $i_h(\alpha)$ applies when $\phi$ is periodic, provided that $\overline {h(\alpha)}\ne\overline{\alpha}$.  In the case  that $\overline {h(\alpha)}=\overline{\alpha}$,  $\phi$ is an orientation preserving isometry that agrees with the identity map at a point of $\partial S$, and hence  is the identity map. It follows that $h$ is the composition of Dehn twists along the boundary components of $S$. 

In the case that $\phi$ is reducible let $h', \phi_0$ and $S_0$ be as in Definition~\ref{Thurstonrep}.  If $\alpha\subset S_0$, then the formulas given above can be applied to compute $i_{h'}(\alpha)=i_h(\alpha)$.  Otherwise, the geometric description breaks into cases. Although an analysis of these cases is straightforward, it seems to yield little insight, and so we do not include this here.

\begin{definition} \cite{HKM1} Let $\alpha$ and $\beta$ be properly embedded oriented arcs in $S$ with a common initial point.  Suppose, without changing notation, that they have been isotoped, while fixing endpoints, to intersect transversely in the minimum possible number of points.  We say $\alpha$ is {\sl to the right of} $\beta$ if the frame consisting of the tangent vector to $\alpha$ followed by the tangent vector to $\beta$ agrees with the orientation on $S$. Similarly, we say $\alpha$ is {\sl to the left of} $\beta$ if the frame consisting of the tangent vector to $\beta$ followed by the tangent vector to $\alpha$ gives the orientation on $S$. This definition implies that every arc is both to the left and right of itself.
\end{definition}
See \cite{HKM1} for several equivalent definitions.

\begin{definition} \cite{HKM1} A homeomorphism $h$ that restricts to the identity map on $\partial S$ is called {\sl right-veering at $C$} if for every embedded oriented arc $\alpha$ with initial endpoint on $C$, $h(\alpha)$ is to the right of $\alpha$. Similarly, a homeomorphism $h$ that restricts to the identity map on $\partial S$ is called {\sl left-veering at C} if for every embedded oriented arc $\alpha$ with initial endpoint on $C$, $h(\alpha)$ is to the left of $\alpha$.
\end{definition}

The first step in using $i_h$ to compute $c(h)$ is the following proposition.

\begin{prop}\cite{HKM1}\label{RV}  Let $h\in Aut(S,\partial S)$. Then $h$ is right-veering if and only if $c(h)>0$ for every component of $\partial S$, and similarly $h$ is left-veering if and only if $c(h)<0$ for every component of $\partial S$.
\end{prop}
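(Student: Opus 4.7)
The plan is to work one boundary component at a time and leverage the shearing-map description of $c(h)$ together with the frame-orientation test for ``to the right of''. Fix a component $C\subset\partial S$; since reversing the orientation of $C$ negates $c(h)$ and swaps right-veering with left-veering, it suffices to prove: $h$ is right-veering at $C$ if and only if $c(h)>0$ at $C$.

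First, I would use the decomposition $h=\sigma\circ\phi$ on $F=S\cup(\partial S\times I)$ described in the text, where $\phi$ is the Thurston representative and $\sigma$ is the shearing extension supported on the added collar of $\partial S$. By the universal-cover description that sends $x_0\times\{1\}$ to $x_p\times\{1\}$, the amount of shearing at the collar of $C$ is exactly $c(h)=p/q$ revolutions. For an embedded oriented arc $\alpha$ based at $x\in C$, putting $\alpha$ and $h(\alpha)$ into the canonical forms $\alpha_0*\overline\alpha*\alpha_1$ and $h(\alpha)_0*\overline{h(\alpha)}*h(\alpha)_1$ shows that the initial spiral of $h(\alpha)$ wraps around $C$ by $c(h)$ revolutions relative to $\alpha_0$.

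For the forward direction, if $c(h)>0$ at $C$ then $h(\alpha)_0$ spirals rightward by $p/q$ revolutions relative to $\alpha_0$. After isotoping $\alpha$ and $h(\alpha)$ to minimize intersections, the initial tangent of $h(\alpha)$ at $x$ is rotated clockwise from that of $\alpha$ (with $S$ on the left of $C$'s positive orientation), so $h(\alpha)$ lies to the right of $\alpha$ by the frame-orientation definition. Thus $h$ is right-veering at $C$. Conversely, assume $h$ is right-veering at $C$. If $c(h)<0$, the same spiraling analysis applied to any arc $\alpha$ based at $C$ yields an initial tangent for $h(\alpha)$ rotated counterclockwise, placing $h(\alpha)$ to the left of $\alpha$ and contradicting right-veering. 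If $c(h)=0$, the shearing is trivial and the veering of $h(\alpha)$ relative to $\alpha$ is controlled purely by $\phi$; I would then produce, for each type of Thurston representative on the piece $S_0\supset C$, an arc $\alpha$ starting at $C$ whose image geodesic $\overline{\phi(\alpha)}$ is strictly leftward of $\overline\alpha$ --- an arc transverse to the stable lamination at a leftward-dynamical puncture for pseudo-Anosov $\phi_0$, and a generic arc displaced by the nontrivial isometry for periodic $\phi_0\ne id$.

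The main obstacle is the $c(h)=0$ case of the converse, where the shearing contributes no tangential rotation and one must extract a leftward-displaced arc from the geometric dynamics of the Thurston representative on $S_0$; the reducible subcase requires restricting attention to arcs lying inside $S_0$ so that the pseudo-Anosov/periodic analysis applies. The remainder of the proof is a careful bookkeeping of orientation conventions aligning the shearing direction of $\sigma$ with the frame-orientation test defining ``to the right of''.
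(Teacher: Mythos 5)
The paper supplies no proof of Proposition~\ref{RV}: it is quoted from \cite{HKM1}. So there is no in-paper argument to compare against, and I will evaluate the proposal on its own terms.

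Your broad strategy --- decompose $h$ as the Thurston representative $\phi$ followed by a shear supported in a collar, then read off the veering from the shear direction and the dynamics of $\phi$ --- is the natural one and matches the framework the paper uses to analyze $i_h(\alpha)$. The forward direction and the $c(h)<0$ half of the converse are fine in spirit.

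The genuine gap is the $c(h)=0$ step of the converse. You propose, for periodic $\phi_0$, to find ``a generic arc displaced by the nontrivial isometry for periodic $\phi_0\ne id$.'' But $c(h)=0$ forces the rotation number of $\phi_0$ on $C$ to be zero, so the orientation-preserving isometry $\phi_0$ fixes the geodesic $C$ pointwise; an orientation-preserving isometry fixing a geodesic boundary pointwise has identity differential along it and is therefore the identity on all of $S_0$. Thus the case you treat is vacuous, and the only case that actually occurs, $\phi_0=id$, yields no strictly leftward-displaced arc in $S_0$: every arc of $S_0$ based at $C$ is fixed by $\phi_0$. This is exactly where care is required; for instance a positive Dehn twist $T_\gamma$ about an interior, non-boundary-parallel curve $\gamma$ is right-veering yet has $c(T_\gamma)=0$ at $\partial S$. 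The paper itself flags the issue in the remark following Corollary~\ref{c(h)<>0}: when $\phi_0$ is periodic and $c(h)=0$, one can only guarantee an arc \emph{fixed} by $h$, not one moved strictly left. So the periodic and reducible-with-periodic-$\phi_0$ subcases need a separate argument that your outline does not supply, and restricting to arcs inside $S_0$ (as you suggest) is precisely what cannot work there; the pseudo-Anosov subcase, which your prong argument handles correctly, does not cover them.
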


We now return our focus to a component $C$ of $\partial S$. In what follows, $c(h)$ denotes the fractional Dehn twist coefficient with respect to $C$ and properly embedded oriented arcs have initial point on $C$ as necessary.

\begin{cor}\label{c(h)<>0}If $\alpha$ is a properly embedded oriented arc in $S$, then $h(\alpha)$ is to the right of $\alpha$ implies $c(h) \ge 0$, and $h(\alpha)$ is to the left of $\alpha$ implies $c(h) \le 0$. \qed
\end{cor}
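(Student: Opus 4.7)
Proof plan: We establish the first implication; the second is symmetric. The approach is a contradiction argument using Proposition~\ref{RV}, where the principal task is to promote the single-arc, single-component hypothesis to the global hypothesis needed to apply that proposition.

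Suppose $h(\alpha)$ is to the right of $\alpha$ and, for a contradiction, assume $c(h) < 0$ at $C$. Denote the remaining boundary components by $C_2, \dots, C_m$. For each $j \ge 2$, choose a positive Dehn twist $T_j$ along a curve parallel to $C_j$ in $S$, supported in an annular neighborhood of $C_j$ chosen to be disjoint from a neighborhood of $C$. Composition with $T_j^{n_j}$ shifts the fractional Dehn twist coefficient at $C_j$ by $n_j$ while leaving the coefficients at the other boundary components unchanged, so by taking each $n_j$ sufficiently negative, the map $g := h \circ T_2^{n_2} \circ \cdots \circ T_m^{n_m}$ has $c(g) < 0$ at every component of $\partial S$. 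Proposition~\ref{RV} then yields that $g$ is left-veering, and in particular $g(\alpha)$ is to the left of $\alpha$.

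Because each $T_j$ is the identity in a neighborhood of $C$, so is $T := T_2^{n_2} \cdots T_m^{n_m}$; hence $T(\alpha)$ agrees with $\alpha$ in a neighborhood of $C$. Applying $h$, which fixes $C$ pointwise, shows that $g(\alpha) = h(T(\alpha))$ agrees with $h(\alpha)$ in a neighborhood of $C$. Choosing minimum-intersection representatives against $\alpha$ for which the isotopies are supported away from a smaller neighborhood of the initial endpoint $x \in C$, the two arcs still coincide near $x$; the ``to the left of $\alpha$'' property of $g(\alpha)$ at $x$ therefore transfers to $h(\alpha)$, so $h(\alpha)$ is both to the left and to the right of $\alpha$. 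By the convention that every arc is both to the left and to the right of itself, this forces the initial tangents of $h(\alpha)$ and $\alpha$ at $x$ to coincide.

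The main obstacle is then to extract a contradiction from this coincidence of tangents when $c(h) < 0$ strictly. The required input is the shearing description of $c(h)$ recalled in the preceding discussion: the shearing that extends the Thurston representative $\phi$ across $\partial S \times I$ rotates the collar of $C$ by an amount directly reflecting $c(h)$, and this rotation forces any minimum-intersection representative of $h(\alpha)$ at $x$ to have its initial tangent strictly rotated relative to that of $\alpha$ in the direction determined by the sign of $c(h)$. Strict negativity of $c(h)$ therefore precludes the tangent coincidence derived above, giving the desired contradiction and hence $c(h) \ge 0$.
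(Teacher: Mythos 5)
The argument breaks down at the transfer step. You assert that, because $g(\alpha)=h(T(\alpha))$ and $h(\alpha)$ coincide pointwise in a collar of $C$, the ``to the left of $\alpha$'' property of $g(\alpha)$ passes to $h(\alpha)$ by choosing minimal-intersection isotopies supported away from a neighborhood of $x$. But whether an arc is to the left or to the right of $\alpha$ at $x$ is an invariant of its isotopy class rel endpoints, and pointwise coincidence of two particular representatives near $x$ does not determine it; the required isotopies supported away from $x$ may simply fail to exist. For instance, if $\alpha$ runs from $C$ to a second component $C_2$, the arcs $T_{C_2}(\alpha)$ and $T_{C_2}^{-1}(\alpha)$ coincide with $\alpha$, and hence with each other, in a collar of $C$, yet in minimal position one lies strictly to the right of $\alpha$ at $x$ and the other strictly to the left: the initial tangent direction is dictated by the global class, and reaching minimal position can require pushing the arc past $\alpha$ near $x$. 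When $\alpha$ terminates on some $C_j$ with $j\ge 2$, the arcs $g(\alpha)$ and $h(\alpha)$ are in exactly this situation, so ``$h(\alpha)$ is to the left of $\alpha$'' does not follow from ``$g(\alpha)$ is to the left of $\alpha$.''

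What the paper is actually invoking is the component-by-component content of Proposition~\ref{RV} as established in \cite{HKM1}: $c(h)<0$ at $C$ by itself forces $h(\alpha)$ to lie strictly to the left of $\alpha$ for every arc $\alpha$ based at $C$, and the corollary is the contrapositive applied to a single arc, with no modification of $h$ needed. Your device of precomposing with boundary twists $T_j^{n_j}$ to manufacture a globally left-veering $g$ is a reasonable instinct given the global phrasing of Proposition~\ref{RV}, and the bookkeeping -- that such $T_j$ are freely isotopic to the identity, hence shift $c$ by $n_j$ only at $C_j$ and fix $c$ at $C$ -- is correct. But it changes the isotopy class of $g(\alpha)$ near the far endpoint of $\alpha$, and you have no control over how that change propagates back to the left/right comparison at $x$.
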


This gives a quick way to show that $c(h)=0$ by producing two arcs, one moved to the right by $h$ and the other moved to the left by $h$.  See Example~\ref{Example_8_20}. 
If $c(h)=0$ it is always possible to find such a pair of arcs. Note that if $\phi$ is periodic or $\phi$ is reducible with periodic $\phi_0$, then we can choose a single arc
$\alpha$ fixed by $h$ (and hence moved both to the left and to the right by $h$). 

For $h$ with $c(h)=\frac{p}{q}$ with $p \neq 0$, the next proposition shows how to produce a homeomorphism $g$ for which computing $c(g)=0$ is equivalent to showing $c(h)=\frac{p}{q}$.

\begin{prop}\label{formula} If $g=\tau_C^{-p}(h^q)$, then $c(g)=qc(h)-p$. \qed
\end{prop}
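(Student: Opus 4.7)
The plan is to use the shearing description of $c$ given just above and reduce the claim to two identities: (a) $c(h^q) = q \cdot c(h)$ for every positive integer $q$, and (b) $c(\tau_C^k \psi) = c(\psi) + k$ for every $\psi \in Aut(S, \partial S)$ and every $k \in \mathbb{Z}$. Granting these,
$$c(g) = c(\tau_C^{-p}(h^q)) = c(h^q) - p = q\, c(h) - p,$$
which is the claim.

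For (a), let $\phi$ be a Thurston representative of $h$. The power $\phi^q$ is freely isotopic to $h^q$ and, on the subsurface $S_0$ containing $C$, restricts to $\phi_0^q$, which is again pseudo-Anosov (or periodic) since $\phi_0$ is. Its rotation along $C$ is exactly $q$ times that of $\phi|C$. Normalizing $C$ to have length one and working in the universal cover of the collar $C \times I$, the shearing extending $\phi$ carries $(x_0,1)$ to $(x_0 + c(h), 0)$, while the shearing extending $\phi^q$ carries $(x_0,1)$ to $(x_0 + q\, c(h), 0)$. Thus $c(h^q) = q\, c(h)$.

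For (b), note that $\tau_C^k$ is supported in a collar of $C$ in $S$ and is freely isotopic to the identity (by sliding the twist off the boundary), so $\tau_C^k \psi$ and $\psi$ share a common Thurston representative $\phi$. In $F = S \cup (\partial S \times I)$, the extension $id \cup \tau_C^k$ is isotopic rel $\partial F$ to $\sigma_k \cup id$, where $\sigma_k$ denotes $k$ full Dehn twists supported in the collar $\partial S \times I$. Writing $s_\psi$ for the shearing extending $\phi$ so that $\phi \cup s_\psi$ is isotopic rel $\partial F$ to $id \cup \psi$, the map $id \cup (\tau_C^k \psi)$ is isotopic rel $\partial F$ to $\phi \cup (\sigma_k \circ s_\psi)$, and $\sigma_k \circ s_\psi$ is itself a shearing whose translation amount on the universal cover is $c(\psi) + k$. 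Identity (b) follows.

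The chief subtlety is verifying identity (a) in the reducible case: $\phi^q$ need not literally be the Thurston representative of $h^q$, since $\phi^q$ could admit a strictly larger maximal collection of invariant geodesics than $\phi$. Nevertheless, the rotation of $\phi^q$ along $C$ — the only datum used by the shearing description — depends only on the free isotopy class and hence coincides with that of the true Thurston representative of $h^q$. Once this is observed, the rest of the argument is a direct comparison of translation amounts of shearings on the universal cover of the collar $C \times I$.
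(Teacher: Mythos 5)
Your proof is correct, and since the paper marks this proposition with an immediate \qed (considering it a direct consequence of the shearing description of $c$), your argument is essentially the one implicitly intended: reduce to the additivity under composition with $\tau_C^k$ and the power law $c(h^q)=qc(h)$, both read off from the translation amount of the shear on the lifted collar. The identification of the reducible-case subtlety in (a) and the observation that only the rotation data on $C$ matters is the right way to make the power law rigorous; your description of the shear as ``carrying $(x_0,1)$ to $(x_0+c(h),0)$'' is a slight abuse (the shear fixes $C\times\{1\}$ pointwise and it is the image of the arc $\{x_0\}\times I$ that spirals), but this does not affect the substance of the argument.
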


Propositions~\ref{formula}, \ref{rightveeringi}, and \ref{leftveeringi} make it possible to estimate $c(h)$ experimentally.

\begin{example}\label{Example_8_20} Figure~\ref{8_20} shows the knot $8_{20}$, its Seifert surface, $S$, and two curves living on $S$. This is a fibred knot with pseudo-Anosov monodromy $h$.  The curves shown are the cores of left and right Hopf bands.  It follows that the indicated arcs $a$ and $b$ in $S$ are moved to the right and to the left, respectively.  By Corollary~\ref{c(h)<>0}, $c(h)=0$.
\end{example}

\begin{figure}[ht]
  	\centering
   	\includegraphics[width=2truein]{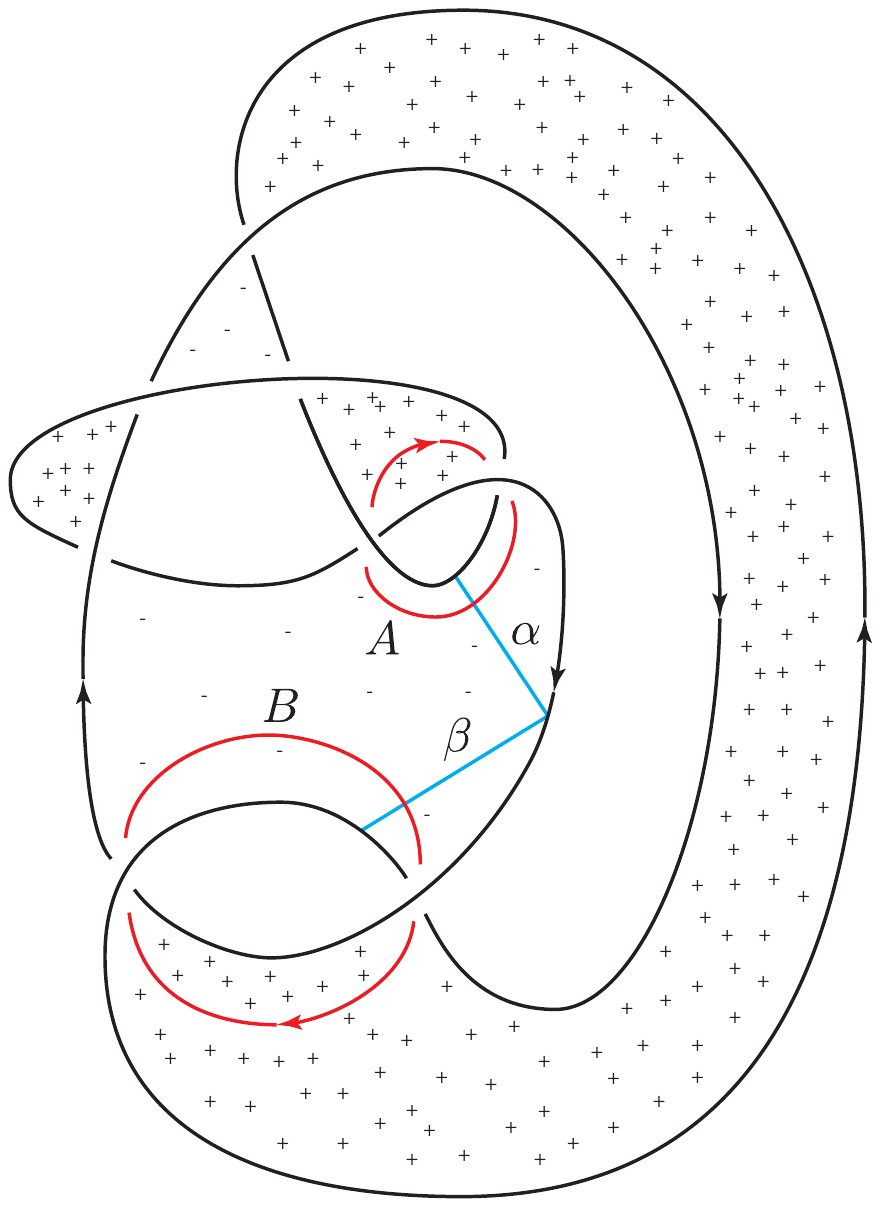} 
	\caption{} 
	\label{8_20}
\end{figure}

\begin{prop}\label{rightveeringi}  Suppose $h$ is right-veering at $C$. Then either 
\begin{itemize}
\item $c(h)\notin {\mathbb Z}$ and $i_h(\alpha)=\lfloor{c(h)}\rfloor$ or
\item $c(h)\in {\mathbb Z}$ and $i_h(\alpha)\in\{ c(h)-1,c(h)\}$.
\end{itemize}
\end{prop}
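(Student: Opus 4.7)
I would compute $i_h(\alpha)$ by lifting the canonical spiraling arcs $\alpha_0$ and $h(\alpha)_0$ to the universal cover of the collar $C\times I \subset F$ and counting signed intersections of deck translates. Identify that cover with $\mathbb{R}\times[0,1]$ (with $\mathbb{R}\times\{0\}$ covering $\partial F$, $\mathbb{R}\times\{1\}$ covering $C$, and deck generator $T\colon (x,t)\mapsto(x+1,t)$), and lift so that $\widetilde{\alpha}_0$ runs monotonically from a basepoint $(\tilde{x},0)$ to $(\tilde{y},1)$ with $\tilde{y}$ in a single fundamental domain about $\tilde{x}$. Invoking the shearing description of the Thurston extension from the paragraph following Definition~\ref{Thurstonrep} — the identity on $\widetilde{\partial F}$ and the canonical rotation-number-$c(h)$ lift $\widetilde{\phi|C}$ on $\widetilde{C}$ — the shearing carries $\widetilde{\alpha}_0$ to $\widetilde{h(\alpha)_0}$, running monotonically from $(\tilde{x},0)$ to $(\tilde{y}',1)$ with $\tilde{y}':=\widetilde{\phi|C}(\tilde{y})$.

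Next, put both lifts in canonical straight-line position in the strip. For each nonzero $k\in\mathbb{Z}$, the translate $T^k\widetilde{h(\alpha)_0}$ meets $\widetilde{\alpha}_0$ transversely in at most one interior point, and does so exactly when $0<-k<\tilde{y}'-\tilde{y}$; right-veering forces $\tilde{y}'>\tilde{y}$ and makes every such intersection positively signed. Summing gives
$$
i_h(\alpha) \;=\; \#\{k\in\mathbb{Z}_{>0}:k<\tilde{y}'-\tilde{y}\},
$$
which equals $\lfloor\tilde{y}'-\tilde{y}\rfloor$ when $\tilde{y}'-\tilde{y}\notin\mathbb{Z}$ and $\tilde{y}'-\tilde{y}-1$ when $\tilde{y}'-\tilde{y}\in\mathbb{Z}$.

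Finally, I would locate $\tilde{y}'-\tilde{y}=\widetilde{\phi|C}(\tilde{y})-\tilde{y}$ in terms of $c(h)$ using standard facts about lifts of orientation-preserving circle homeomorphisms. The displacement function $\tilde{y}\mapsto\widetilde{\phi|C}(\tilde{y})-\tilde{y}$ is continuous, $1$-periodic, has oscillation strictly less than $1$, and realizes the mean value $c(h)$. When $c(h)\notin\mathbb{Z}$ the displacement cannot take an integer value (otherwise a fixed point of $\phi|C$ would force $c(h)\in\mathbb{Z}$), so it is trapped in $(\lfloor c(h)\rfloor,\lceil c(h)\rceil)$ and the formula above yields $i_h(\alpha)=\lfloor c(h)\rfloor$. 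When $c(h)\in\mathbb{Z}$ the displacement only need lie in $(c(h)-1,c(h)+1)$, producing $i_h(\alpha)\in\{c(h)-1,c(h)\}$ according to whether $\tilde{y}'-\tilde{y}$ falls below, equals, or exceeds the integer $c(h)$. The main obstacle is justifying the at-most-one interior intersection claim in the straight-line model, which comes down to the monotonicity built into the spiraling convention combined with the monotonicity of the shearing; once this is in place, the dichotomy in the two bullet points is precisely the standard circle-dynamics fact that a rotation-number-$\rho$ lift can realize an integer displacement only if $\rho$ itself is an integer. The reducible case of Definition~\ref{Thurstonrep} with $\alpha\not\subset S_0$ is handled by applying the same argument to $\phi_0$ on $S_0$.
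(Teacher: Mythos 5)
Your proposal takes a genuinely different route from the paper's. The paper works with a concrete periodic orbit $\{x_0,\dots,x_{q-1}\}$ of $\phi_C$ (chosen from the lamination cusps when $\phi$ or $\phi_0$ is pseudo-Anosov, or built from the $\phi$-orbit of a geodesic arc's endpoint when $\phi$ or $\phi_0$ is periodic), indexes its lifts with integers, and tracks the image of one fundamental interval $[x_0,x_1)$ under the lifted shearing $\tilde g$. You instead parametrize directly by the rotation number and invoke soft facts about lifts of orientation-preserving circle homeomorphisms: the displacement $d$ is $1$-periodic with oscillation $<1$, the rotation number $c(h)$ lies between $\inf d$ and $\sup d$, and $d$ can take an integer value only if $c(h)\in\mathbb Z$. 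This abstraction buys a cleaner derivation of the final dichotomy, with no need to split on the residue $r$.

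A few points need tightening. The phrase ``realizes the mean value $c(h)$'' is not correct for general circle homeomorphisms (the rotation number is generally not the mean of the displacement); the fact you actually use, $\inf d\le c(h)\le\sup d$, is standard and is the one to cite. More substantively, the step you flag as ``the main obstacle'' --- at most one interior intersection per deck translate --- is the easy half of the collar reduction. The harder half, which your sketch elides, is that $i_h(\alpha)$ is in fact computed entirely by the spiraling pieces, i.e.\ $i_h(\alpha)=\langle \operatorname{int} h(\alpha)_0,\operatorname{int}\alpha_0\rangle$: you must rule out contributions from intersections of the geodesic pieces $\overline\alpha$ and $\overline{h(\alpha)}$ in $S$ whose initial segments land in an annular neighborhood of $C$. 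The paper's careful placement of the $x_i$ and the appeal to minimal intersection of geodesics with the invariant laminations (pA case) or with the $\phi$-orbit of $\rho$ (periodic case) exist precisely to justify this; your argument needs the same geometric input stated explicitly, and the reducible case with $\alpha\not\subset S_0$ requires its own check here rather than a blanket deferral to $\phi_0$. Finally, the annulus case must be handled separately, as the paper does: an annulus admits no hyperbolic metric with geodesic boundary, so the ``geodesic plus spiraling collar arcs'' canonical position you lift is undefined there.
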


\begin{proof}
Consider first the case that $S$ is an annulus. In this case, necessarily $h=T^n$, with $n=c(h)$ and  $T$  a positive Dehn twist about the core of the annulus. Also, $i_h(\alpha)=c(h)-1$ and so the result holds. So we may assume $S$ is not an annulus. 

Let $\phi$ be the Thurston representative of $h$.  Let $\phi_C$ denote the restriction of $\phi$ to $C$. 

If $\phi$ is pseudo-Anosov or $\phi$ is reducible with $\phi_0$ pseudo-Anosov, then let $\{ x_0,x_1,...x_{q-1}\}$ be any perodic orbit of $\phi_C$.
We choose $\{ x_0,x_1,...x_{q-1} \}$ to be ordered cyclically about $C$, with order compatible with the induced orientation on $C$.

If $\phi$ is periodic, then choose a properly embedded geodesic arc $\rho$ in $S$ with both endpoints on $C$.
If $\phi$ is reducible with $\phi_0$ periodic, then since  $S_0$ is not an annulus, we may choose a properly embedded geodesic arc $\rho$ in $S_0$ with both endpoints on $C$.    In each case, 
set $x_0=\overline{\rho}(0)$ and let 
$\{ x_0,x_1,...x_{q-1}\}$ denote the corresponding periodic orbit of  $\phi_C$. Recall when constructing this orbit that $\phi$ and  $\phi_0$ respectively are isometries.  Again, we assume that $\{ x_0,x_1,...x_{q-1} \}$ is ordered cyclically about $C$, with order compatible with the induced orientation on $C$.

Notice that $$\phi_C([x_0,x_1) )= [x_r, x_{(r + 1)\,  mod\,  q})$$
for some $r, 0\le r\le q-1$.  Also, $$r=0 \Longleftrightarrow c(h)\in {\mathbb Z}\, .$$
Moreover, if $\beta$ is a properly embedded geodesic arc  orthogonal to the boundary with $\beta(0)=\overline{\beta}(0)\in [x_0,x_1)$, then $\phi_C(\beta)(0)\in [x_r, x_{(r + 1)\,  mod\,  q})$.
When $\phi$ is pseudo-Anosov or reducible with  $\phi_0$ pseudo-Anosov, this is immediate since any geodesic $\beta$ intersects the stable and unstable laminations transversally and minimally.  When $\phi$ or  $\phi_0$  is periodic, this follows from the choice of $x_0=\overline{\rho}(0)$.  Since $\overline{\rho}$ and all of its images under $\phi$ are geodesics, necessarily $\overline{\rho}$ will intersect $\beta$ transversally and minimally.

Now consider the shearing homeomorphism, $f$, that extends $\phi$ across $C\times I$, and, in particular,  maps the arc $x_0\times I$ to an arc connecting $x_0\times \{0\}$ to 
$x_{p\, mod \, q}\times \{1\}$. Define $g:C\to C$ by $g(z)=z^{\prime}$, where $f$ maps the arc $z\times I$ to an arc connecting $z\times\{0\}$ to $z^{\prime}\times \{1\}$.
Lift $g$ to a map $\tilde{g}:{\mathbb R}\to {\mathbb R}$, and index the lifts of the points $x_i$ with integers in the natural way. Notice that since 
$\phi_0([x_0,x_1)) = [x_r, x_{(r + 1)\,  mod\,  q})$, $$\tilde{g}([x_0,x_1)) = [x_{r+nq}, x_{r+1+nq})$$ where $n = \lfloor{c(h)}\rfloor$. 

Finally, we compute $i_h(\alpha)$. Let $z=\overline{\alpha}(0)\in C$. We may assume that $z\in [x_0,x_1)$, since we may relabel the indices of the $x_i$ as necessary while preserving their cyclic order.
If $r\ne 0$,  then no lift of $z$ lies in $[x_{r+nq}, x_{r+1+nq})$ and it follows that $i_h(\alpha)=\lfloor{c(h)}\rfloor$. If $r=0$, then both $\phi_C(z)$ and $z$ have unique lifts in $[x_{nq},x_{nq+1})$. Label these lifts $\widetilde{\phi_C(z)}$ and $\widetilde{z}$ respectively. If
$\widetilde{\phi_C(z)}$ lies to the right of $\widetilde{z}$ in $[x_{nq},x_{nq+1})$, then $i_h(\alpha) = c(h)$. Otherwise, $i_h(\alpha) = c(h)-1$.
\end{proof}

\begin{cor}If $h$ is right-veering at $C$, then $i_h(\alpha) \le c(h) \le i_h(\alpha) + 1$.
\end{cor}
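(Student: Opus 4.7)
The plan is to obtain this corollary as an immediate case analysis from Proposition~\ref{rightveeringi}. Since the proposition splits according to whether $c(h)$ is an integer, I would simply handle the two cases and verify the double inequality in each.

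First, suppose $c(h)\notin\mathbb{Z}$. Then Proposition~\ref{rightveeringi} gives $i_h(\alpha) = \lfloor c(h)\rfloor$. By the defining property of the floor function, $\lfloor c(h)\rfloor \le c(h) < \lfloor c(h)\rfloor + 1$, so $i_h(\alpha) \le c(h) \le i_h(\alpha)+1$ follows at once (the second inequality is even strict).

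Next, suppose $c(h)\in\mathbb{Z}$. Then by the proposition $i_h(\alpha)\in\{c(h)-1,c(h)\}$. If $i_h(\alpha)=c(h)$, then $i_h(\alpha) = c(h) \le c(h) = i_h(\alpha) \le i_h(\alpha)+1$; if $i_h(\alpha) = c(h)-1$, then $i_h(\alpha) \le c(h) = i_h(\alpha)+1$. Either way the required inequality holds.

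There is no real obstacle here; the content is entirely supplied by Proposition~\ref{rightveeringi}, and the corollary is essentially a reformulation phrased without the floor function and without separating the integer and non-integer cases. The only thing one has to notice is that the two cases together pin $c(h)$ to an interval of length one whose left endpoint equals $i_h(\alpha)$ (non-integer case or integer case with $i_h(\alpha) = c(h)$) or equals $i_h(\alpha)$ with right endpoint attained (integer case with $i_h(\alpha) = c(h)-1$).
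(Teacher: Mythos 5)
Your proof is correct and is exactly the routine case analysis the paper has in mind; the corollary is stated without proof precisely because it follows immediately from Proposition~\ref{rightveeringi} in the two cases you treat.
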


\begin{cor}\label{compute} If $h$ is right-veering at $C$ and there exist arcs $\alpha$ and $\beta$ with $i_h(\alpha)<i_h(\beta)$, then $c(h)=i_h(\beta)=i_h(\alpha)+1$.
\end{cor}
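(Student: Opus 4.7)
The plan is to derive the corollary directly from Proposition~\ref{rightveeringi}, which gives a complete dichotomy for the possible values of $i_h$ when $h$ is right-veering at $C$. The key observation is that the first case of the dichotomy forces $i_h$ to be constant, so the hypothesis $i_h(\alpha) < i_h(\beta)$ rules it out.

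Concretely, I would argue as follows. Since $h$ is right-veering at $C$, Proposition~\ref{rightveeringi} applies to every properly embedded oriented arc with initial point on $C$. If $c(h) \notin \mathbb{Z}$, then the proposition asserts $i_h(\gamma) = \lfloor c(h) \rfloor$ for every such arc $\gamma$, and in particular $i_h(\alpha) = i_h(\beta)$, contradicting the hypothesis. Hence $c(h) \in \mathbb{Z}$.

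In this integer case, Proposition~\ref{rightveeringi} tells us that each of $i_h(\alpha)$ and $i_h(\beta)$ lies in $\{c(h)-1, c(h)\}$. Since $i_h(\alpha) < i_h(\beta)$, the only possibility consistent with both values lying in a two-element consecutive integer set is $i_h(\alpha) = c(h) - 1$ and $i_h(\beta) = c(h)$. This immediately yields $c(h) = i_h(\beta) = i_h(\alpha) + 1$, which is the desired conclusion.

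There is no real obstacle here; the content of the corollary is essentially a bookkeeping consequence of the trichotomy in Proposition~\ref{rightveeringi}, exploiting the fact that the non-integer case forces $i_h$ to be constant across all arcs. The only small thing to be careful about is confirming that Proposition~\ref{rightveeringi} applies to both $\alpha$ and $\beta$ (it does, since the right-veering hypothesis is a property of $h$ at $C$, not of the particular arc chosen).
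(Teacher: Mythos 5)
Your proof is correct, and since the paper states Corollary~\ref{compute} without proof, your derivation directly from the dichotomy in Proposition~\ref{rightveeringi} is exactly the intended argument. (A marginally slicker route is to apply the preceding unnamed corollary, $i_h(\gamma)\le c(h)\le i_h(\gamma)+1$, to both $\alpha$ and $\beta$ and squeeze, but this is the same content.)
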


For completeness, we record the left-veering version of these results.
 
\begin{prop} \label{leftveeringi}  Suppose $h$ is left-veering at $C$. Then either 
\begin{itemize}
\item $c(h)\notin {\mathbb Z}$ and $i_h(\alpha)=\lceil{c(h)}\rceil$ or
\item $c(h)\in {\mathbb Z}$ and $i_h(\alpha)\in\{ c(h),c(h)+1\}$.
\end{itemize}
\end{prop}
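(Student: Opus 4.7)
The plan is to deduce Proposition~\ref{leftveeringi} from Proposition~\ref{rightveeringi} by reversing the orientation of $S$. Let $\bar{S}$ denote the same underlying surface equipped with the opposite orientation, so that $h$ continues to define an element of $\mathrm{Aut}(\bar{S},\partial\bar{S})$. Since the notions ``to the right of'' and ``to the left of'' depend on the orientation of $S$, they interchange on $\bar{S}$; in particular, $h$ is left-veering at $C$ on $S$ if and only if $h$ is right-veering at $C$ on $\bar{S}$.

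Two sign changes drive the argument. First, reversing orientation negates the fractional Dehn twist coefficient, $c_{\bar{S}}(h) = -c_S(h)$; this is visible in the definition $\gamma = p\lambda+q\nu$ by tracking the induced orientations of $\lambda=\partial S$ and the meridian $\nu$, and is consistent with Proposition~\ref{RV} once left- and right-veering are swapped. Second, $i_h(\alpha)$ is a \emph{signed} intersection number in $S$ (given by $\langle \mathrm{int}\, h(\alpha)_0,\mathrm{int}\,\alpha_0\rangle$ in the generic case), so its sign flips when $S$ is re-oriented; hence $i_{h,\bar{S}}(\alpha) = -i_{h,S}(\alpha)$. I would verify these two identities carefully at the start of the proof.

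Applying Proposition~\ref{rightveeringi} to $h$ acting on $\bar{S}$ then completes the argument. In the non-integer case $c_{\bar{S}}(h) = -c(h) \notin \mathbb{Z}$, one obtains
\[
-i_h(\alpha) \;=\; i_{h,\bar{S}}(\alpha) \;=\; \lfloor -c(h) \rfloor \;=\; -\lceil c(h) \rceil,
\]
so $i_h(\alpha) = \lceil c(h) \rceil$. In the integer case $c_{\bar{S}}(h) = -c(h) \in \mathbb{Z}$,
\[
-i_h(\alpha) \;=\; i_{h,\bar{S}}(\alpha) \;\in\; \{-c(h)-1,\,-c(h)\},
\]
which gives $i_h(\alpha) \in \{c(h),\,c(h)+1\}$. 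These match the two alternatives in the statement exactly.

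The only genuine obstacle is establishing the identity $c_{\bar{S}}(h) = -c_S(h)$; the flip of $i_h$ under re-orientation is immediate from its definition as a signed intersection number. If one prefers to avoid the orientation-reversal trick altogether, an equivalent route is to mirror the proof of Proposition~\ref{rightveeringi} step by step, replacing ``spiraling to the right'' by ``spiraling to the left,'' $\lfloor \cdot \rfloor$ by $\lceil \cdot \rceil$, and the half-open interval $[x_0, x_1)$ by $(x_{-1}, x_0]$. This is more explicit but is essentially the same argument run in the opposite direction.
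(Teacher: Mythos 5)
Your proof is correct, and it matches the paper's intent: the paper gives no explicit proof of Proposition~\ref{leftveeringi}, introducing it with ``For completeness, we record the left-veering version of these results,'' i.e.\ it is meant to follow by symmetry from Proposition~\ref{rightveeringi}. Your orientation-reversal argument is the cleanest way to make that symmetry precise. The two facts you flag as needing verification, $c_{\bar S}(h)=-c_S(h)$ and $i_{h,\bar S}(\alpha)=-i_{h,S}(\alpha)$, are indeed the crux and both hold: the first because reversing the orientation of $S$ reverses the induced orientation on $C$ and hence reverses the cyclic labelling of the periodic points (so a shear ending at $x_p$ ends at $x_{-p}$ after reversal), and the second because the intersection pairing $\langle\,\cdot\,,\,\cdot\,\rangle$ changes sign under orientation reversal. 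The floor/ceiling arithmetic and the shift of $\{c(h)-1,c(h)\}$ to $\{c(h),c(h)+1\}$ both check out. Your fallback suggestion, rerunning the right-veering proof with $\lfloor\cdot\rfloor$ replaced by $\lceil\cdot\rceil$ and half-open intervals flipped, is equally valid and is what the paper most likely has in mind; the orientation-reversal route is a bit slicker since it avoids re-auditing the case analysis in the proof of Proposition~\ref{rightveeringi}.
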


\begin{cor}If $h$ is left-veering at $C$, $i_h(\alpha) - 1 \le c(h) \le i_h(\alpha)$. \qed
\end{cor}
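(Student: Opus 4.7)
The plan is to deduce this from Proposition \ref{rightveeringi} applied to $h^{-1}$. Three identities are needed. First, if $h$ is left-veering at $C$, then $h^{-1}$ is right-veering at $C$: applying the orientation-preserving $h^{-1}$ (which fixes the common initial point on $C$, since $h$ is the identity on $\partial S$) to the frame witnessing that $h(\alpha)$ lies to the left of $\alpha$ yields a frame witnessing that $h^{-1}(\alpha)$ lies to the right of $\alpha$. Second, $c(h^{-1}) = -c(h)$: the Thurston representative of $h^{-1}$ is $\phi^{-1}$, whose suspension flow traverses the same periodic orbit as that of $\phi$ but in the opposite direction, so re-orienting to keep the meridian coefficient $q$ positive turns $p\lambda + q\nu$ into $-p\lambda + q\nu$. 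Third, $i_h(\alpha) = -i_{h^{-1}}(\alpha)$: since $h$ is an orientation-preserving homeomorphism that fixes $C$ pointwise and takes the pair $(h^{-1}(\alpha), \alpha)$ to $(\alpha, h(\alpha))$, it preserves signed intersection numbers, giving $i_{h^{-1}}(\alpha) = \langle h^{-1}(\alpha)_0, \alpha_0\rangle = \langle \alpha_0, h(\alpha)_0\rangle = -\langle h(\alpha)_0, \alpha_0\rangle = -i_h(\alpha)$ by antisymmetry of the signed intersection.

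Combining these with Proposition \ref{rightveeringi} applied to $h^{-1}$: if $c(h) \notin \mathbb{Z}$, then $i_{h^{-1}}(\alpha) = \lfloor c(h^{-1})\rfloor = \lfloor -c(h)\rfloor = -\lceil c(h)\rceil$, so $i_h(\alpha) = \lceil c(h)\rceil$; if $c(h) \in \mathbb{Z}$, then $i_{h^{-1}}(\alpha) \in \{-c(h)-1, -c(h)\}$, so $i_h(\alpha) \in \{c(h), c(h)+1\}$, as claimed.

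The main technical subtlety is the third identity: the annular neighborhood of $C$ in which $i_h$ is measured is not preserved pointwise by the shearing extension of $h^{-1}$, so one must check that intersection points remain in the prescribed neighborhood after being moved by $h$. This can be handled by working with a sufficiently small initial neighborhood, or alternatively by mirroring the proof of Proposition \ref{rightveeringi} directly: in the annulus base case, $h = T^n$ with $n = c(h) \le -1$ and a direct computation gives $i_h(\alpha) = c(h)+1$; in the general case, the same shearing argument goes through, but the lift $\tilde g$ now translates in the negative direction, and the signed count of lifts of $\alpha$ crossed between $z$ and $\tilde g(z)$ equals $\lceil c(h)\rceil$ when $r\ne 0$, while the two possibilities $\{c(h), c(h)+1\}$ arise when $r = 0$ depending on whether the unique lift of $\widetilde{\phi_C(z)}$ in $[x_{nq}, x_{nq+1})$ lies to the left or right of $\widetilde z$.
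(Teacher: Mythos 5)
Your proof is correct, but it does considerably more work than the paper intends for this particular corollary. The paper treats this corollary as an immediate consequence of Proposition~\ref{leftveeringi} (stated just above it): if $c(h)\notin\mathbb{Z}$ then $i_h(\alpha)=\lceil c(h)\rceil$, so $i_h(\alpha)-1<c(h)<i_h(\alpha)$; if $c(h)\in\mathbb{Z}$ then $i_h(\alpha)\in\{c(h),c(h)+1\}$, and both values satisfy $i_h(\alpha)-1\le c(h)\le i_h(\alpha)$. That one-line derivation is all the $\qed$ signals. What you have actually done is supply a proof of Proposition~\ref{leftveeringi} itself — which the paper omits, saying only that it is included ``for completeness'' — and your final paragraph lands exactly on its statement rather than on the corollary's inequality, so you should append the trivial last step above.

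Your route to Proposition~\ref{leftveeringi} via $h^{-1}$ is a genuinely different, and arguably cleaner, argument than the implicit one the paper has in mind (a symmetric re-run of the proof of Proposition~\ref{rightveeringi} with left/right and floor/ceiling swapped, as you sketch in your last paragraph as an alternative). Your three identities are all correct: left-veering of $h$ gives right-veering of $h^{-1}$ by relabeling in the definition; $c(h^{-1})=-c(h)$ is the $p=0$, $q=-1$ case of Proposition~\ref{formula}; and $i_h(\alpha)=-i_{h^{-1}}(\alpha)$ follows by applying the orientation-preserving homeomorphism $h$ to the configuration $(h^{-1}(\alpha),\alpha)$ and using antisymmetry of the signed count near $C$. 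You correctly flag the one subtlety in the third identity — that $h$ need not preserve the chosen annular neighborhood of $C$ — and either of your proposed fixes (shrinking the neighborhood, or mirroring the shearing argument directly) resolves it. The $h^{-1}$ reduction buys you the left-veering results for free from the right-veering ones and avoids redoing the shearing/lifting analysis; the direct mirror argument the paper implies is self-contained but twice the work.
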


\begin{prop} If $h$ is neither right nor left-veering at $C$, that is $c(h)=0$, then for any $\alpha$, $i_h(\alpha)=0$.
\end{prop}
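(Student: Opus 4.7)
The plan is to adapt the proof of Proposition~\ref{rightveeringi} to the case $c(h)=0$, where everything simplifies because the shearing map has no net rotation and therefore preserves fundamental domains in the universal cover of $C\times I$.

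First, I would verify that $c(h)=0$ forces $\phi_C$ to admit a fixed point $x_0\in C$. If $\phi$, or $\phi_0$ in the reducible case, is periodic, then $\phi_C$ is an orientation-preserving isometric rotation of $C$ with rotation number $c(h)=0$, hence the identity; if $\phi$ or $\phi_0$ is pseudo-Anosov, then $\phi_C$ still has rotation number $0$ and so admits a fixed point. The annulus case is immediate: $h=\tau_C^{c(h)}=id$, and $i_h(\alpha)=0$ for every $\alpha$; I assume henceforth that $S$ is not an annulus.

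Next, I would transplant the setup from the proof of Proposition~\ref{rightveeringi}. With $p=0$, $q=1$, $r=0$, and $n=\lfloor c(h)\rfloor =0$, the formula there specializes to $\tilde g([\tilde x_0,\tilde x_0+1))=[\tilde x_0,\tilde x_0+1)$, where $\tilde g\colon\mathbb{R}\to\mathbb{R}$ is the lift of $g$ fixing a chosen lift $\tilde x_0$ of $x_0$. Consequently the shearing homeomorphism $f$ extending $\phi$ across $C\times I$ lifts to $\tilde f$ preserving the fundamental strip $[\tilde x_0,\tilde x_0+1]\times I$ of the universal cover of $C\times I$.

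Now I would take any properly embedded oriented arc $\alpha$ with initial endpoint on $C$, set $z=\overline\alpha(0)$, and work with the canonical forms $\alpha=\alpha_0 * \overline\alpha * \alpha_1$ and $h(\alpha)=h(\alpha)_0 * \overline{h(\alpha)} * h(\alpha)_1$. Because $\tilde f$ preserves the fundamental strip, the monotone spirals $\alpha_0$ and $h(\alpha)_0=f(\alpha_0)$ have equal winding about $C$. Two such monotone spirals in $C\times I$, sharing the endpoint $\alpha(0)\in\partial F$ and having equal winding but possibly distinct endpoints $z,\phi_C(z)\in C$, can be isotoped (rel $\alpha(0)$, with the endpoints on $C$ free to slide) to agree on a common initial segment and then diverge to their respective endpoints on $C$, yielding disjoint interiors. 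The geodesic pieces $\overline\alpha$ and $\overline{h(\alpha)}$ contribute intersections only in the interior of $S$, away from any annular neighborhood of $C$. Hence the signed count of initial intersections of $h(\alpha)$ and $\alpha$ is $0$, so $i_h(\alpha)=0$.

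The main obstacle is the minimal-intersection bookkeeping in the annulus: one must check carefully that equal winding for the two spirals with a shared endpoint on $\partial F$ really does force the signed intersection count to vanish after passing to minimum intersection, and that no contributions to $i_h(\alpha)$ arise from the geodesic portions of the canonical form.
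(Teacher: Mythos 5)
Your argument takes a genuinely different route from the paper's. The paper's proof is a short, almost purely formal reduction: for $h(\alpha)\ne\alpha$ (the case $h(\alpha)=\alpha$ being trivial), set $g=T_C h$ and $g'=T_C^{-1}h$; Proposition~\ref{formula} gives $c(g)=1$ and $c(g')=-1$, so $g$ is right-veering and $g'$ is left-veering, and Propositions~\ref{rightveeringi} and~\ref{leftveeringi} yield $i_g(\alpha)\le 1$ and $i_{g'}(\alpha)\ge -1$; combined with $i_h(\alpha)=i_g(\alpha)-1=i_{g'}(\alpha)+1$ this squeezes $i_h(\alpha)$ to $0$. You instead re-run the geometric/dynamical argument from the proof of Proposition~\ref{rightveeringi} in the special case $p=0$, $q=1$, $r=0$, $n=0$: the lift of the shear preserves a fundamental strip, so the spirals $\alpha_0$ and $h(\alpha)_0$ have no net relative winding and can be pushed off one another. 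This is conceptually sound, and it gives a more direct geometric picture of why the count vanishes, but it carries the burden of re-establishing several delicate points that the paper's reduction lets Propositions~\ref{rightveeringi} and~\ref{leftveeringi} absorb: the footpoint $\overline{h(\alpha)}(0)$ is not literally $\phi_C(z)$ but only lies in the same fundamental interval (this is the ``equal winding'' claim you flag at the end), the degenerate periodic subcase $\overline{h(\alpha)}=\overline\alpha$, and the reducible case with $\alpha\not\subset S_0$ (where the paper's own earlier discussion of the geometric formula for $i_h$ explicitly stops short). Fully filled in, your approach works; the paper's $T_C^{\pm 1}$ trick buys brevity and sidesteps all of this case-chasing at the cost of being less transparent about the underlying geometry.
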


\begin{proof} If $h(\alpha)=\alpha$, then $i_h(\alpha) = 0$. So we may assume $h(\alpha)\ne\alpha$.

 Let $g=T_Ch$ where $T_C$ is a right Dehn twist about $C$.  By Proposition~\ref{formula}, $c(g)=c(h)+1=1$, thus $g$ is right-veering and by Proposition~\ref{rightveeringi}, $i_g(\alpha)\le c(g)=1$.  Since $h(\alpha)\ne\alpha$, it follows that $i_h(\alpha)=i_g(\alpha)-1\le 0$.

Next let $g'=T_C^{-1}h$.  Then $c(g')=c(h)-1=-1$, thus $g'$ is left-veering and by Proposition~\ref{leftveeringi}, $-1=c(g') \le i_{g'}(\alpha)$.  It follows that $i_h(\alpha)=i_g'(\alpha)+1\ge 0$.
\end{proof}

\begin{definition}\label{stabilize} Let $g\in Aut(S^{\prime},\partial S^{\prime})$ and let $a$ be a properly embedded arc in $S^{\prime}$.  Define $S$ to be the union of $S^{\prime}$ and a band attached to $\partial S^{\prime}$ along neighborhoods of the endpoints of $a$ in $S^{\prime}$ so that the resulting surface is orientable.   This presents $S$ as the union of $S^{\prime}$ and an annulus that intersect in a regular neighborhood of $a$.  Let $T$ be a positive Dehn twist on the annulus.  Let $\hat g$ and $\hat T$ be the extensions of $g$ and $T$, respectively, to the portion of $S$ where they are not already defined, by the identity map.  The {\sl positive stabilization of $g$ along $a$} is the homeomorphism $h=\hat T \circ \hat g :S \to S$. Refer to $a$ as the {\it plumbing arc} of the stabilization.
\end{definition}

\begin{thm}\label{stabilizedc(h)} If $h$  is a positive stabilization, and $S$ has connected boundary, then $0 \le c(h) \le 1/2$.
\end{thm}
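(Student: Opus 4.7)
The plan is to establish the two inequalities separately, both by analyzing the action of $h$ on the co-core of the plumbed annulus, with Proposition~\ref{formula} used to reduce the upper bound to a right-veering computation for $h^{2}$. Let $C=\partial S$ and let $\beta$ denote the co-core of the plumbing annulus $A=N(a)\cup B$, chosen to lie inside the band $B$ with both endpoints $p,q$ on $\partial S$; since $\partial S$ is connected, $p$ and $q$ both lie on the single boundary component $C$.

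For the lower bound $c(h)\ge 0$, I would observe that $\hat g$ restricts to the identity on $B$, so $h(\beta)=\hat T(\beta)$ is obtained from $\beta$ by a single positive Dehn twist along the core $c$ of $A$. At the initial endpoint $p$, this twist shifts $\beta$ to the right, so Corollary~\ref{c(h)<>0} yields $c(h)\ge 0$.

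For the upper bound $c(h)\le 1/2$, I would invoke Proposition~\ref{formula} with $p=0$, $q=2$ to get $c(h^{2})=2c(h)$, reducing the problem to showing $c(h^{2})\le 1$. Proposition~\ref{rightveeringi} applied to $h^{2}$ then says it suffices to exhibit a single arc $\alpha$ with $i_{h^{2}}(\alpha)=0$. I would again take $\alpha=\beta$ and analyze $i_{h^{2}}(\beta)$ in the product model $A\cong S^{1}\times[0,1]$, with core $c=S^{1}\times\{1/2\}$ and co-core $\beta=\{0\}\times[0,1]$. In this model, $T^{2}(\beta)=\{(2f(y),y)\}$ meets $\beta$ in exactly one essential interior point, located at $y=f^{-1}(1/4)$ in the middle of the twist region, which lies outside any sufficiently thin annular neighborhood of $C$ at $p$. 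The full image $h^{2}(\beta)=\hat T\hat g\hat T(\beta)$ can differ from $T^{2}(\beta)$ on the portion of $\hat T(\beta)$ lying in $N(a)\subset S'$, but these differences occur in the interior of $S$, away from the initial annular neighborhood of $C$. Thus $i_{h^{2}}(\beta)=0$, and Proposition~\ref{rightveeringi} gives $c(h^{2})\le 1$, hence $c(h)\le 1/2$.

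The main obstacle is the rigorous justification that $i_{h^{2}}(\beta)=0$ in the presence of the unknown map $\hat g$, since $\hat g$ can displace the portion of $\hat T(\beta)$ passing through $N(a)$ and in principle create new interior intersections with $\beta$. The connected-boundary hypothesis is essential precisely here: because both endpoints of the co-core lie on the same boundary component $C$, the shearing contributions from the double wrapping of $h^{2}(\beta)$ around $c$ must be distributed between $p$ and $q$ along the same circle, giving the factor-of-two refinement of the weaker bound $c(h)\le 1$. Carrying this out cleanly requires checking that one can arrange $\beta$ and $h^{2}(\beta)$ in minimal position so that no interior intersection falls inside the initial collar at $p$.
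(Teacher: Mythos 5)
Your high-level strategy matches the paper's: prove the lower bound by noting the cocore is carried to the right by the Dehn twist, and prove the upper bound by reducing, via Proposition~\ref{formula}, to showing $c(h^2)\le 1$, which by Proposition~\ref{rightveeringi} follows from exhibiting an arc $\alpha$ with $i_{h^2}(\alpha)=0$. But the crucial step, computing $i_{h^2}$ of your chosen arc, is missing, and the reasoning you sketch for it does not work.

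Two concrete problems. First, you pick $\alpha=w$ (the cocore), so $h^2(\alpha)=\hat T\hat g\hat T(w)$ has $\hat g$ sandwiched in the middle, and your analysis amounts to ``the deviations from $T^2(w)$ caused by $\hat g$ stay in the interior.'' That is not true in general: after $\hat T(w)$ enters $S'$, the map $\hat g$ can carry it arbitrarily close to $\partial S'$, which is part of $\partial S$. The paper sidesteps this by taking $\beta=h^{-1}(w)$ instead, so that $h^2(\beta)=h(w)=\hat T(w)$ is completely explicit and concrete (no $\hat g$ at all); all the complexity of $\hat g$ is isolated in the single arc $\beta$, which the proof then traces through the four reference edges $n,e,w,s$ of Figure~\ref{news}. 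Second, and more fundamentally, you are applying the wrong criterion. Recall the definition: a point $x\in \alpha\cap h^2(\alpha)$ contributes to $i_{h^2}(\alpha)$ when the \emph{union of the initial segments} of $\alpha$ and $h^2(\alpha)$ up to $x$ lies in an annular neighborhood of $C$. It is irrelevant whether $x$ itself is near $C$. The paper's edge-tracking argument establishes precisely this: each candidate intersection $\beta\cap h^2(\beta)$ is shown to have initial segments whose union either misses $n$ (hence is not boundary parallel) or is dual to $n$ (hence non-separating, again not boundary parallel). Your claim that intersections ``lie outside any sufficiently thin annular neighborhood of $C$'' neither implies nor is implied by the condition you actually need. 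You correctly sensed there was a gap here, but the repair requires the combinatorial bookkeeping with the $n,e,w,s$ edges rather than a localization argument.
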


\begin{proof} Let $w$ be the cocore of the stabilizing handle added to $S^{\prime}$ as shown in Figure~\ref{news}.  Consider the arcs $h(w)$ and $\beta=h^{-1}(w)$.  To show that $c(h)\le 1/2$, it is enough, by Proposition~\ref{formula},  to show that $c(h^2) \le 1$.  This in turn will follow, by  Proposition~\ref{rightveeringi}, if we show $i_{h^2}(\beta)=0$.

To see this, we follow the arc $\beta$ from its initial point, keep track of the edges labelled $n, e, w$ and $s$ that it crosses, and argue that if it eventually intersects $h^2(\beta)$, the intersection point will not contribute to $i_{h^2}(\beta)$.  The first edge that $\beta$ crosses is $e$.  From this point on, it never crosses either $w$ or $e$.  Thus if it intersects $h^2(\beta)$, it must do so in the portion $f$ of $\beta$ in the square bounded by $n, e, w$, and $s$.  If after hitting $e$, $\beta$ intersects $f$, the initial segments of $\beta$ and $f$ are not boundary parallel since their union misses $n$.  In this case, this point of $\beta \cap f$ contributes nothing to $i_{h^2}(\beta)$, subsequent intersections can not contribute either, and we conclude $i_{h^2}(\beta)=0$.

Now consider the case instead that $\beta$ first hits $e$, then $s$, and then $s$ again.  This forces an intersection of $\beta \cap f$, and for the same reason, we conclude $i_{h^2}(\beta)=0$.

This leaves the case that $\beta$ intersects $e, s$, and $n$ in order.  At this point, either the entire arc $\beta$ misses $f$, in which case there is nothing to prove, or $\beta$ intersects $f$.  In the latter case, the union of the initial segments of $\beta$ and $f$ are a non-separating curve dual to $n$, and hence not boundary parallel.
\end{proof}

\begin{figure}[ht]
  	\centering
   	\includegraphics[width=4truein]{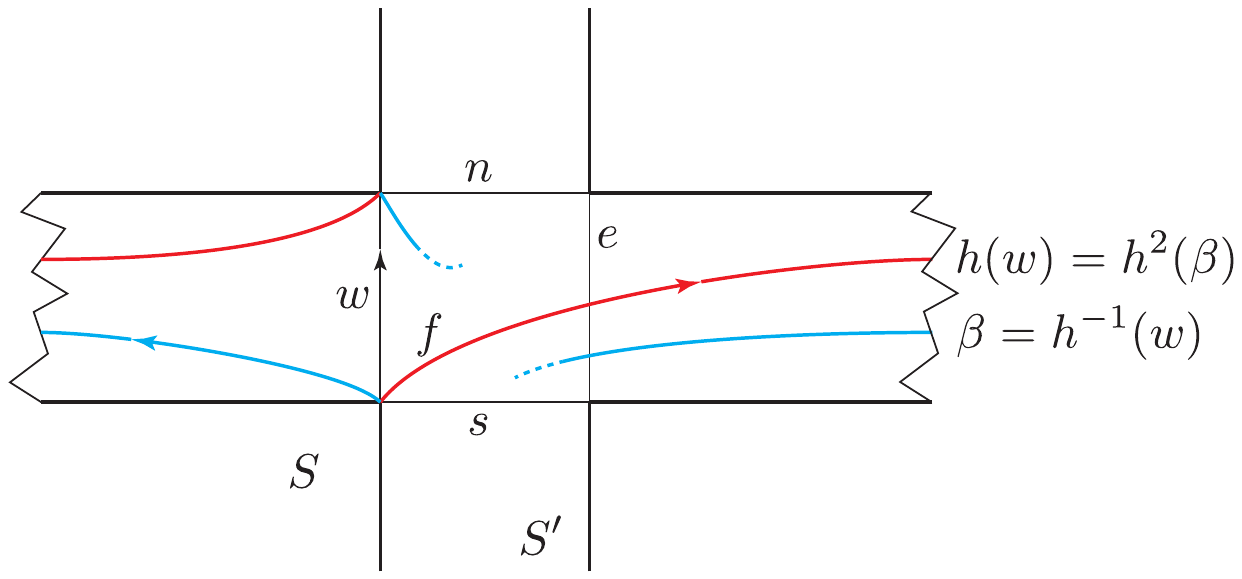} 
	\caption{} 
	\label{news}
\end{figure}

\begin{example}\label{lensexample} Let $A$ be an annulus, and let $T$ be a positive Dehn twist of $A$.  Let $h'=T^n$ with $n\ge 5$.  Next let $\sigma\subset A$ be an arc that cuts $A$ into a disk, and let $h:S \to S$ be the positive stabilization of $h'$ along $\sigma$.  Since $S$ is a punctured torus, a straightforward homology computation shows that $h$ is pseudo-Anosov.  Since $h$ is a product of positive Dehn twists, it is right-veering, hence $c(h)> 0$.  By Theorem~\ref{stabilizedc(h)}, $c(h) \le 1/2$.  Since $\chi (S)=-1$ the stable lamination can have only $1$ or $2$ cusps surrounding $\partial S$.  From this it follows that $c(h)$ is a multiple of 1/2.  It now follows that $c(h)=1/2$.
\end{example}

\begin{prop}\label{prop18} Let $h$ be a homeomorphism of a surface $S$ with connected boundary.  Suppose that $h$ is a positive stabilization of a homeomorphism $g$ of $S'$ along a plumbing arc $a$.  If $c(h)=1/2$, then $i_g(a)\ge 1$ and $i_g(a^{-1})\ge 1$. In particular, $c(g)\ge 1$ on each boundary component of  $S^{\prime}$.
\end{prop}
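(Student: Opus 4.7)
Plan: My plan is to feed the hypothesis $c(h)=1/2$ into the diagnostic framework of Section~\ref{basics}. By Proposition~\ref{formula}, $c(h^2)=2c(h)=1$, so $h^2$ is right-veering (Proposition~\ref{RV}) and Proposition~\ref{rightveeringi} applied to $h^2$ (with $c(h^2)=1\in{\mathbb Z}$) gives $i_{h^2}(\alpha)\in\{0,1\}$ for every properly embedded oriented arc $\alpha$ based on $\partial S$. The proof of Theorem~\ref{stabilizedc(h)} already established $i_{h^2}(\beta)=0$ for $\beta=h^{-1}(w)$, where $w$ is the cocore of the stabilizing band; the plan is to refine that analysis to extract the constraint $i_g(a)\ge 1$ from the sharp equality $c(h^2)=1$.

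The strategy is to take an arc $\alpha$ based at the boundary component containing $p_1=a(0)$ and examine how $\alpha$ and $h^2(\alpha)$ interact near $p_1$. Tracking $h^2=\hat T\hat g\hat T\hat g$, and using that the $h^2$-image of an arc built from the cocore $w$ traverses the band, runs along a parallel of $g(a)$ through $S'$, and traverses the band again before a further application of $\hat T$, the boundary-parallel initial intersections are governed by two contributions: the near-boundary behavior of the pair $(a,g(a))$ in $S'$, which is exactly $i_g(a)$, and twist contributions from the two applications of $\hat T$ on the stabilizing annulus. A refined version of the $(n,e,w,s)$-band case analysis from the proof of Theorem~\ref{stabilizedc(h)} should show that if $i_g(a)\le 0$ then the twist contributions accumulate to force $i_{h^2}(\alpha)\ge 2$ for some such $\alpha$, contradicting the bound from Proposition~\ref{rightveeringi}. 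The mirror argument at $p_2=a(1)$ with $a^{-1}$ produces $i_g(a^{-1})\ge 1$. The main obstacle is this bookkeeping: precisely which initial segments fit in an embedded collar of $\partial S$, and how the Dehn twist $\hat T$ and the interaction of $g(a)$ with a neighborhood of $a$ combine into the signed count — separating genuine collar-bound contributions from ones whose initial segments meander across the band or loop through $S'$.

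For the ``in particular'' conclusion, let $C_1\subset\partial S'$ contain $p_1$. Since $i_g(a)\ge 1>0$, Proposition~\ref{leftveeringi} rules out $g$ being left-veering at $C_1$, and the unlabelled proposition immediately preceding Definition~\ref{stabilize} rules out the fractional Dehn twist coefficient of $g$ at $C_1$ being zero. Hence $g$ is right-veering at $C_1$, and Proposition~\ref{rightveeringi} yields that this coefficient is at least $i_g(a)\ge 1$. The identical argument at $p_2\in C_2\subset\partial S'$ using $i_g(a^{-1})\ge 1$ gives that the coefficient at $C_2$ is at least $1$; since $\partial S$ is connected, the band-attachment combinatorics force every boundary component of $S'$ to contain an endpoint of $a$, so $c(g)\ge 1$ on every component.
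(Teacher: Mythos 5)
Your first paragraph sets up the right diagnostic framework (pass to $h^2$, note $c(h^2)=1$ via Proposition~\ref{formula}, invoke Proposition~\ref{rightveeringi} to get $i_{h^2}(\alpha)\in\{0,1\}$), and your ``in particular'' argument in the third paragraph is essentially correct: $i_g(a)\ge 1$ forces right-veering at that component, the corollary $i_g(a)\le c(g)$ gives $c(g)\ge 1$ there, and the orientability and connectedness constraints do force $S'$ to have exactly two boundary components, each containing one endpoint of $a$.

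But the core of the proposition --- that $c(h)=1/2$ forces $i_g(a)\ge 1$ --- is not proved; it is deferred. Your second paragraph says a ``refined version of the $(n,e,w,s)$-band case analysis \ldots\ should show'' that $i_g(a)\le 0$ forces some $\alpha$ with $i_{h^2}(\alpha)\ge 2$, and you then name that bookkeeping as ``the main obstacle'' without resolving it. That bookkeeping is the proof; identifying where the difficulty lies is not a substitute for carrying out the argument. Moreover, the contradiction you target is not the one the paper arranges, and it is not clear your version can be made to work. The band analysis in the proof of Theorem~\ref{stabilizedc(h)} is a purely combinatorial upper-bound argument (follow $\beta$ across $n,e,w,s$ and show candidate intersections are not boundary parallel); nothing in it is set up to produce a lower bound on $i_{h^2}$. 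The paper's proof of this proposition introduces genuinely new machinery: it passes to geodesic representatives $\bar n,\bar s,\bar\sigma$ in a hyperbolic structure, uses the periodic (attracting/repelling) points $x_i,y_i$ of the Thurston representative's restriction to $\partial S$, pins down the intervals containing $\bar w(0),\bar w(1),\bar\sigma(0),\bar\sigma(1)$, and then uses the crucial \emph{existence} of an arc $\beta$ with $i_{h^2}(\beta)=1$ (a consequence of $c(h)$ equalling $1/2$ exactly) to locate $\overline{h(\sigma)}(0)$ or $\overline{h(\beta)}(0)$. The contradiction is that $i_g(a)<1$ forces $\overline{h(\sigma)}(0)$ into the wrong interval --- a dynamical/positional contradiction, not an ``$i_{h^2}\ge 2$'' contradiction. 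None of this structure appears in your sketch, so as it stands the argument has a genuine gap at exactly the point where the content of the proposition lies.
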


\begin{proof} 
By symmetry, it suffices to show that $i_g(a)\ge 1$.

Let $n, e, w, s$ be the properly embedded arcs introduced in the proof of Theorem~\ref{stabilizedc(h)} and let $\sigma=h(w)$.
Consider their geodesic representatives $\overline{n}, \overline{e} = \overline{w},\overline{s},$ and $\overline{\sigma}$. 

Notice that $\overline{n}=\overline{s}$ if and only if $S'$ is an annulus and in this case, $c(h)=1/2$ if and only if $g=T_C^n$ for some 
$n\ge 5$. So we may restrict attention to the case that $S'$ is not an annulus and $\overline{n}\cap \overline{s}=\emptyset$.

Focus attention on $\overline{n}\cap h(\overline{s})$. If this intersection is nonempty, follow the arc $h(\overline{s})$ from its initial point 
$\overline{s}(0)$ to its first intersection $h(\overline{s})(t_0)=\overline{n}(t_1)$ with $\overline{n}$. Notice that  if the geodesic representatives $\overline{n}$ and $\overline{s}$ had been chosen in $S'$, rather than in $S$, they would be equal  and isotopic to $a$.  It follows that $i_g(a)\ge 1$ if and only if $\overline{n}\cap h(\overline{s})\ne\emptyset$ with $h(\overline{s})[0,t_0] * \overline{n}[0,t_1]$ boundary parallel in $S^{\prime}$. See Figure~\ref{twisting>1}.

\begin{figure}[ht]
  	\centering
   	\includegraphics[width=3truein]{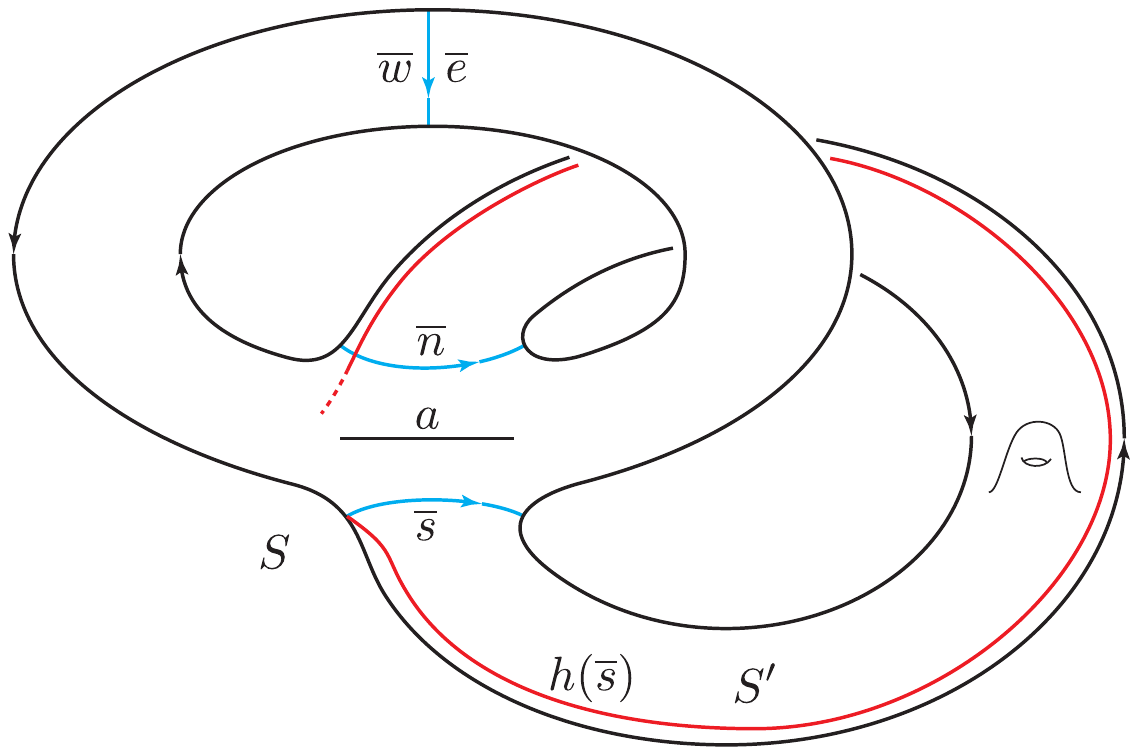} 
	\caption{} 
	\label{twisting>1}
\end{figure}

We next introduce reference points about the boundary $\partial S$. These are represented in Figure~\ref{dynamics}. As usual, let $\phi$ denote the Thurston representative of $h$. 

Consider first the case that $\phi$ is periodic or reducible with $\phi_0$ periodic. In this case, set $n=1$ and choose distinct points $x_1$ and $y_1$ so that 
$\overline{w}(0)\in (x_1,y_1)$ and also
the listing
$\{x_1,y_1,x_2,y_2\}$ is ordered cyclically and compatibly with the induced orientation of $C$, where 
$\{x_1,x_2\}$ and $\{y_1,y_2\}$ are each (order two) $\phi$-orbits  if $\phi$ is periodic and (order two) $\phi_0$ orbits if $\phi$ is reducible with $\phi_0$ periodic. 

Consider next the case that $\phi$ is pseudo-Anosov or reducible with $\phi_0$ pseudo-Anosov. 
Since $c(h)=1/2$, there are an even number, $2n$ say, of attracting periodic points and an even number, again $2n$, of repelling critical points in $C$ with respect to $\phi$ or $\phi_0$ respectively. Let $\{x_1,y_1,...x_{2n},y_{2n}\}$ be a listing of these, labelled cyclically and ordered compatibly with the
induced orientation of $C$. Cut $C$ open along these periodic points to obtain $4n$ pairwise disjoint intervals
$$(x_1,y_1), (y_1,x_2),...,(x_{2n},y_{2n}),(y_{2n},x_1)$$ 
We may further assume that the periodic points are labelled so that $\overline{w}(0)\in (x_1,y_1)$. Since $c(h)=1/2$ it
follows that $\overline{\sigma}(0)\in (x_{n+1},y_{n+1})$. 

The cyclic ordering in $C$ of endpoints of arcs in $S$ agrees with the cyclic ordering of their geodesic representatives, provided the arcs have no boundary parallel intersections.  Using this we see that since the endpoints $\partial w\cup\partial\sigma$ are cyclically ordered about $C$ as $\{w(0),\sigma(0),w(1),\sigma(1)\}$, necessarily $\overline{w}(1)\in (x_{n+1},y_{n+1})$ and $\overline{\sigma}(1)\in (x_1,y_1)$. See Figure~\ref{dynamics}.

\begin{figure}[ht]
  	\centering
   	\includegraphics[width=5truein]{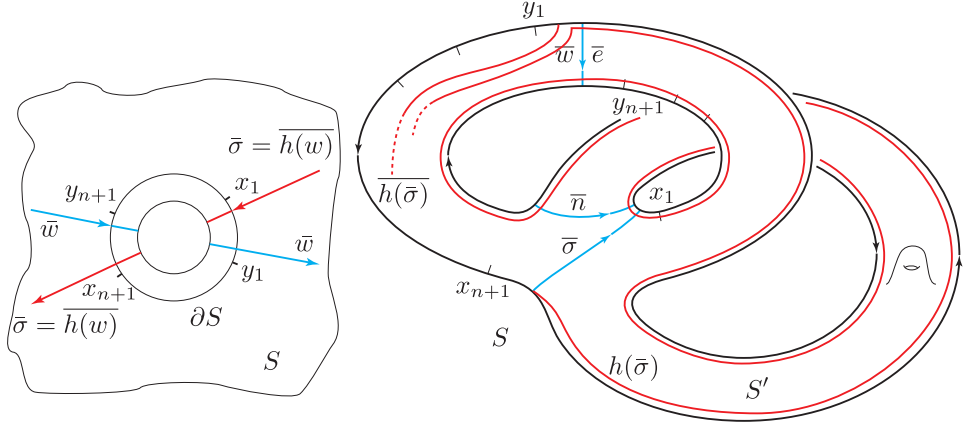} 
	\caption{Two views of $\partial S$} 
	\label{dynamics}
\end{figure}

Since $c(h)=1/2$, there is a properly embedded arc $\beta$ so that $i_{h^2}(\beta)=1$. In fact, either 
$i_{h^2}(w)=1$ or we can choose $\beta$ so that $\overline{\beta}(0)\in (y_1,x_2)$. 
If $i_{h^2}(w)=1$, then $\overline{h(\sigma)}(0)\in (\overline{w}(0),y_1)$.  In the second case, $\overline{h(\beta)}(0)\in (y_{n+1},x_{n+2})\subset  (\overline{w}(1),\overline{\sigma}(1))$.

The goal now is to relate the initial point of a geodesic orthogonal to $\partial S$, such as $\overline{h(\sigma)}$, to the homeomorphisms $\hat{T}_C$ and $\hat{g}$. Given a geodesic orthogonal to the boundary, for instance $\bar\sigma$, denote by $\bar g\bar\sigma(0)$ the initial point of $\overline{g(\bar\sigma)}$. This notation is not meant to suggest that $\bar g$ or $\bar T$ are functions that can be applied to arbitrary points of $\partial S'$.

We now show the locations of $\overline{h(\sigma)}(0)$ or $\overline{h(\beta)}(0)$ are not possible if $i_g(a) < 1$.  Recall that $h=\hat {T}_C \circ \hat g :S \to S$. We begin with $\bar{\sigma}(0), \bar{\beta}(0)\in (\bar{w}(0),\bar{s}(0))$.
So $$\bar{g}\bar{\sigma}(0)\in (\bar{g}\bar{w}(0),\bar{g}\bar{s}(0)) = (\bar{w}(0),\bar{g}\bar{s}(0)) \, .$$
If $i_g(a) < 1$ we may therefore conclude that 
$$\bar{g}\bar{\sigma}(0)\in  (\bar{w}(0),\bar{n}(0))\, .$$
and with no boundary parallel intersection. So 
$$\overline{h(\sigma)}(0)= \bar{T}_C \bar{g}\bar{\sigma}(0)\in  (\bar{T}_C \bar{w}(0),\bar{T}_C \bar{n}(0))\, .$$
and with no boundary parallel intersection. But then it follows that
$\overline{h(\sigma)}(0)\in (\bar{\sigma}(0),\bar{n}(0))$, contradicting the conclusion of the preceding paragraph. We may also conclude that $\overline{h(\beta)}(0)\in (\bar{w}(0),\bar{n}(0))$, again an impossibility. We conclude that $i_g(a)\ge 1$.
\end{proof}

\section{Stallings' twists applied to (2,1)-cables.}\label{pA}

Let $S$ be a surface properly embedded in a 3-manifold $M$, and let $C$ be an unknot which is properly embedded in $S$.  We say that $C$ is {\it untwisted relative to $S$} if its regular neighborhood in $S$ is an untwisted annulus. This means, if $D$ is� an embedded disk  in $M$  bounded by $C$, then $C$ is untwisted relative to $S$ if and only if $D$ can be isotoped relative to its
boundary so that $D$ and $S$ are transverse along $C$. A {\it Stallings' twist} \cite{St}  is a surgery along any such unknotted untwisted $C$.

Stallings' twists will be applied to $(2,1)$-cables and exploited in Section~\ref{applications}.  As a first step we show in this section that the monodromies associated to the construction are pseudo-Anosov and have fractional Dehn twist coefficient $1/2$.

Consider an open book $S^3=(F,h)$, with connected binding $k$, such that $h$ is isotopic to a pseudo-Anosov map $\psi$ and $c(h)=0$. Let $N$ denote a regular neighborhood of $k$. Abuse notation and let $F$ also denote the compact fibre of the surface bundle $S^3\setminus int N$, and  consider two copies of this compact fibre: $F_0 = F \times \{0\}$ and $F_{1/2} = F\times \{1/2\}$. Choose a hyperbolic metric with geodesic boundary for $F$ and let $F_0$ and $F_{1/2}$ inherit this metric.

Let $N_0 \subset int N$ be a second smaller regular neighborhood of $k$ and let $K$ denote the $(2,1)$-cable of $k$, embedded in $\partial N_0$ (see \cite{Rol}, p. 112).  Letting $F:S^1\times D^2 \to S^3$ be an embedding with image $N_0$, the image under $F$ of the $(2,1)$ torus knot (shown in Figure~\ref{(2,1)cable})  is what is standardly known as the $(2,1)$-cable of $k$. 

\begin{figure}[ht]
  	\centering
   	\includegraphics[width=3truein]{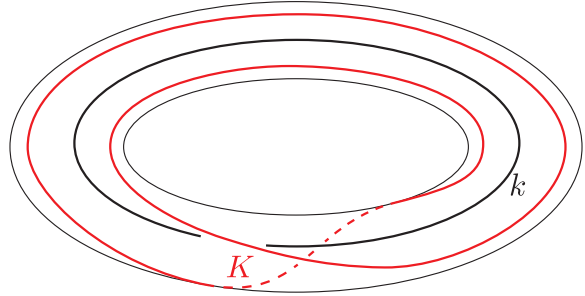} 
	\caption{The $(2,1)$-cable of $k$.} 
	\label{(2,1)cable}
\end{figure}

Let $P$ denote a pair of pants properly embedded in $N\setminus int N_1$, with boundary components $\partial F_0\cup\partial F_{1/2}$ in $\partial N$ and a third boundary component lying on $\partial N_1$, where  $N_1$ is a small regular neighborhood of $K$ lying in the interior of $N$. See Figure~\ref{pants}. 

\begin{figure}[ht]
  	\centering
   	\includegraphics[width=3truein]{} 
	\caption{}
	\label{pants}
\end{figure}

Set $S = F_0\cup P\cup F_{1/2}$. 
Extend the hyperbolic metric on $F_0\cup F_{1/2}$ over $P$ to obtain a hyperbolic metric  on 
$S$ with geodesic boundary.  Notice that $S$ can be thought of as the union of two copies of $F$ connected by a 1-handle. Let $\alpha = \hat{\alpha}$ denote the geodesic representative orthogonal to $\partial S$ of  a cocore of this 1-handle. We may choose this  hyperbolic metric on $S$  so that there is an orientation preserving isometry, $r:P\to P$ say, which interchanges the two components of $P \setminus \alpha$.  

Lemma~\ref{H_0formula} gives two slightly different formulas for the monodromy map $S$.  Depending on the context, it is enough to describe a representative of the free isotopy class, in which case we use $H_0$ and $r$.  To compute fractional Dehn twist coefficients we need to describe a representative of the relative isotopy class.  This is done by adjusting $r$ so that it fixes $\partial S$ pointwise.  To accomplish this, add a collar neighborhood $\partial S \times I$ to $S$, and let $R$ be the homeomorphism of $P \cup  (\partial S \times I)$ that extends $r$ and is a shear to the right by half a rotation on $\partial S \times I$.  For simplicity of notation, we  incorporate the collar into $S$ and think of $R$ as a homeomorphism of $S$.

\begin{lemma}\label{H_0formula} The surface $S$ is a fibre of $K$ in the fibred 3-manifold $S^3\setminus N_1$. Representatives of the free and relative isotopy classes of the monodromy classes are $H_0$ and $H'_0:S \to S$, respectively, which are given by

$$ H_0(z)  = \left\{ \begin{array}{ll} (x,1/2) & \mbox{ if } z=(x,0) \mbox{ lies in } F_0 \\
                           (h(x),0) & \mbox{ if } z = (x,1/2) \mbox{ lies in } F_{1/2} \\
                           (T_{\partial F_0} T_{\partial F_{1/2}})^{-1} r (z) & \mbox{ if } z \mbox{ lies in } P \end{array} \right.$$
                           
$$ H'_0(z)  = \left\{ \begin{array}{ll} H_0(z) & \mbox{ if } z \notin P \\
                           (T_{\partial F_0} T_{\partial F_{1/2}})^{-1} R(z) & \mbox{ if } z \in P \end{array} \right.$$            

\end{lemma}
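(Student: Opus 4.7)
My plan is to construct the fibration of $S^3\setminus\operatorname{int}(N_1)$ with fiber $S$ explicitly, from the suspension flow of $(F,h)$ and the meridional rotation of $N$, and then to read off the monodromy. Let $\phi_u$ denote the suspension flow on $S^3\setminus\operatorname{int}(N)$, so $\phi_u(F_t)=F_{t+u}$, and let $\rho_u\colon N\to N$ denote rotation of the solid torus by angle $2\pi u$ in the meridional direction. These agree on $\partial N$ and splice to a flow $\Phi_u$ on $S^3\setminus k$; since $\rho_u$ preserves $N_1$ setwise, $\Phi_u$ restricts to $S^3\setminus\operatorname{int}(N_1)$. Set $S_u:=\Phi_u(S)=F_u\cup\rho_u(P)\cup F_{u+1/2}$.

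The first task is to verify that $\{S_u\}$ is a fibration over $S^1$ with period $1/2$: the $F$-pieces foliate $S^3\setminus\operatorname{int}(N)$ by the original open book; $\rho$ acts freely on $N\setminus k$, so the $\rho$-orbit of $P$ sweeps out $N\setminus\operatorname{int}(N_1)$; the two flows splice along $\partial N$ by construction. At $u=1/2$ the $r$-symmetry of the chosen hyperbolic metric on $P$ gives $\rho_{1/2}(P)=P$, so $S_{1/2}=F_{1/2}\cup P\cup F_0=S$, closing the fibration. The period is $1/2$ rather than $1$ because $\operatorname{lk}(\mu_k,K)=2$, which forces the natural generator of $H^1(S^3\setminus\operatorname{int}(N_1);\mathbb Z)$ to wrap twice around $S^1$ along a meridian of $k$.

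Computing the monodromy $H_0=\Phi_{1/2}|_S$ is then immediate on the $F$-pieces: on $F_0$ the flow sends $(x,0)$ to $(x,1/2)\in F_{1/2}$, and on $F_{1/2}$ it sends $(x,1/2)$ to $(x,1)=(h(x),0)\in F_0$, matching the $F$-components of the claimed formula. On $P$ the map is the half-rotation $\rho_{1/2}|_P$, which interchanges the two components of $P\setminus\alpha$. Because boundary Dehn twists are trivial in the free mapping class group of a pair of pants, $\rho_{1/2}|_P$ is freely isotopic to $r$, so any expression $(\text{boundary twists})\cdot r$ represents this same free class.

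The delicate step is pinning down the specific factor $(T_{\partial F_0}T_{\partial F_{1/2}})^{-1}$ in front of $r$ rather than bare $r$. I would compare $\rho_{1/2}|_P$ and $r$ on annular collars of $\partial F_0$ and $\partial F_{1/2}$: both act as isometries between these collars, but the rotational parametrizations they induce along $\partial F_0$ and $\partial F_{1/2}$ differ by a full turn each, producing precisely the correction $(T_{\partial F_0}T_{\partial F_{1/2}})^{-1}$ needed for the three pieces of the formula to glue into a homeomorphism of $S$ rather than merely an element of the free mapping class group. For the relative isotopy class $H_0'$, I would modify $\Phi_u$ near $\partial N_1$ by a shear that fixes $\partial S$ pointwise; this replaces the free isometry $r$ with its shear extension $R$ while leaving the Dehn twist factor unchanged.
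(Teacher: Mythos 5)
Your construction of the splice flow $\Phi_u$ is exactly the product structure the paper reads off from its figures of product disks in $N\setminus\operatorname{int}N_1$, and your computation of $H_0=\Phi_{1/2}|_S$ piece by piece matches the paper's argument, with your sketch of the Dehn-twist correction (``I would compare $\rho_{1/2}|_P$ and $r$ on annular collars\dots'') at the same level of detail as the paper's ``Direct computation shows\dots'' on two arcs. One small logical wrinkle: you should first choose $N_1$ and $P$ to be invariant under the half-rotation $\rho_{1/2}$ (possible since the $(2,1)$-curve on $\partial N_0$ is $\rho_{1/2}$-invariant) and only then observe that the hyperbolic isometry $r$ is realized by $\rho_{1/2}|_P$; as written you have the implication running backwards, deducing $\rho_{1/2}(P)=P$ from the existence of $r$.
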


\begin{proof} $(S^3\setminus int N)\setminus S$ inherits a product structure from the product structure on $(S^3\setminus int N)\setminus F$. So it suffices to understand a compatible product structure on $(N\setminus int N_1)\setminus P$. 


A meridional flow line for $S^3 \setminus int N$ is shown on the left of Figure~\ref{Productdisk}.  This splits into two subarcs that are flow lines for the $(2,1)$-cable of $k$.  For each subarc, a parallel flowline is shown as part of the boundary of a disk.  The disks in turn are unions of flowlines in $(N\setminus int N_1)\setminus P$. 

Figure~\ref{Monodromy} shows $P$ flattened out, and it records the positions of two arcs under the monodromy map which is the identity on the boundary.  The monodromy, up to isotopies fixing $\partial S$, is determined by the action on these arcs.  Direct computation shows that $(T_{\partial F_0} T_{\partial F_{1/2}})^{-1} R(z)$ agrees with the monodromy map on these arcs.
\end{proof}

\begin{figure}[ht]
  	\centering
   	\includegraphics[width=3truein]{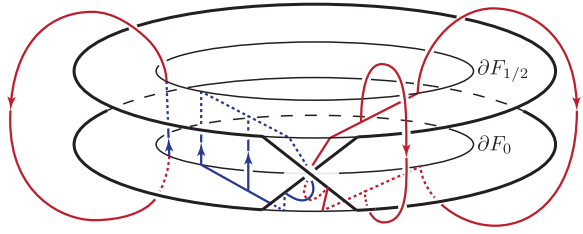} 
	\caption{Two product disks and some representative flowlines for $H'_0:S \to S$ are shown. }
	\label{Productdisk}
\end{figure}

\begin{figure}[ht]
  	\centering
   	\includegraphics[width=2.5truein]{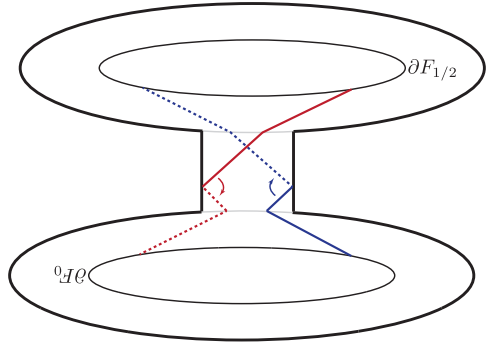} 
	\caption{Before and after images of two arcs in $P$ under the flow.}
	\label{Monodromy}
\end{figure}

Choose a simple closed curve $C$ in $S$ with two properties.  First, it intersects $F_0\cup F_{1/2}$  in  two essential arcs, $C_0 = C\cap F_0$ and $C_{1/2}=C\cap F_{1/2}$. Second, $C_i$ is nonseparating in $F_i$ for each $i = 0, 1/2$.  Now set $H= T_C\circ H_0$.

\begin{thm}\label{H} Let $H$ be the map obtained from the above construction applied to a pseudo-Anosov homeomorphism $h$ of $F$ with $c(h)=0$. The Thurston representative $\Psi$ of $H$ is pseudo-Anosov.
\end{thm}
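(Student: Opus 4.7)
The strategy is to apply Thurston's classification (Theorem~\ref{Thurston}) to the free isotopy class of $H$ and rule out both the periodic and reducible alternatives, leaving pseudo-Anosov as the only possibility.

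First I would show that $H$ is not freely isotopic to a periodic map. By Lemma~\ref{H_0formula}, $H_0^2$ preserves each of $F_0$ and $F_{1/2}$ setwise and, up to boundary twists coming from $(T_{\partial F_0}T_{\partial F_{1/2}})^{-1}$ and the order-two isometry $r$, acts as $h$ on each subsurface. Since $T_C$ only contributes a Dehn twist along $C$, the restriction of $H^2$ to $F_0$ (up to isotopy in the mapping class group of $F_0$) is $h$ composed with Dehn twists along $\partial F_0$ and along an arc supported near $C_0 = C\cap F_0$; this is still a pseudo-Anosov mapping class. Hence $H^2|_{F_0}$ has topological entropy at least $\log\lambda(h)>0$, where $\lambda(h)$ is the dilatation of $h$. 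Consequently $H$ itself has positive topological entropy and cannot be isotopic to a periodic map.

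Next I would rule out reducibility. Suppose for contradiction that there is an $H$-invariant essential multicurve $\Gamma\subset S$, isotoped so as to meet $\partial F_0\cup \partial F_{1/2}$ transversely and minimally. Passing to a suitable power $H^N$ which fixes each component of $\Gamma$ up to isotopy, the restriction $H^{2N}|_{F_0}$ would preserve $\Gamma\cap F_0$ (a collection of arcs and closed curves) up to isotopy. Since $H^{2N}|_{F_0}$ is pseudo-Anosov, the only such invariant objects in $F_0$ are $\partial$-parallel; the same holds on $F_{1/2}$. Thus, after isotopy, $\Gamma$ lies in $P$ together with neighborhoods of $\partial F_0\cup \partial F_{1/2}$. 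Because $P$ is a pair of pants, its essential simple closed curves are parallel to $\partial F_0$, $\partial F_{1/2}$, or $\partial N_1$, so $H$-invariance forces $\Gamma$, up to isotopy, to consist of curves parallel to $\partial F_0\cup\partial F_{1/2}$. However, $C$ was chosen to meet each of $\partial F_0$ and $\partial F_{1/2}$ essentially in its arcs $C_0$ and $C_{1/2}$, so $T_C$ cannot preserve any such multicurve up to isotopy, a contradiction.

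\textbf{Main obstacle.} The key technical difficulty is verifying that $H^2|_{F_0}$ is pseudo-Anosov in the mapping class of $h$ modulo boundary twists. One must carefully disentangle three contributions to $H^2|_{F_0}$: the action of $T_C$ (which, once isotoped back to a self-map of $F_0$, becomes a Dehn twist along the arc $C_0$ together with boundary data), the half-rotation coming from applying $R$ twice, and the boundary corrections from $(T_{\partial F_0}T_{\partial F_{1/2}})^{-1}$. Once this decomposition is in hand, the pseudo-Anosov conclusion for $H^2|_{F_0}$ follows because composing a pseudo-Anosov homeomorphism with Dehn twists along boundary and along $\partial$-based arcs does not change its Nielsen--Thurston type. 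Additional care is needed in the reducibility argument when components of $\Gamma$ wind through $P$ and through neighborhoods of $\partial F_0\cup\partial F_{1/2}$, but the small complexity of $P$ makes these cases tractable by direct inspection.
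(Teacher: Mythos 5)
The central step in your argument is the claim that $H^2$ (or $H^{2N}$) restricts, up to isotopy, to a pseudo-Anosov self-map of $F_0$. This cannot hold: $H$ was constructed precisely so that the twist $T_C$ destroys the reducibility of $H_0$ along $\partial F_0 \cup \partial F_{1/2}$. Concretely, $H(\partial F_0) = T_C\bigl(H_0(\partial F_0)\bigr) = T_C(\partial F_{1/2})$, and since $C$ was chosen so that $C \cap F_{1/2}$ is an essential arc, one has $i\bigl(T_C(\partial F_{1/2}), \partial F_{1/2}\bigr) = i(C, \partial F_{1/2})^2 = 4 > 0$, so $T_C(\partial F_{1/2})$ is isotopic to neither $\partial F_0$ nor $\partial F_{1/2}$. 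Hence $H$, and likewise every power of $H$, fails to preserve the isotopy class of the multicurve $\partial F_0 \cup \partial F_{1/2}$, and there is no isotopy taking $H^2$ to a map that carries $F_0$ to itself. Your ``main obstacle'' paragraph gestures at this by invoking a ``Dehn twist along an arc supported near $C_0$,'' but a Dehn twist along a properly embedded arc is not a well-defined self-homeomorphism, and this is exactly the place where the argument comes apart. Since both your periodic and your reducible branches route through $H^{2N}|_{F_0}$, the gap is fatal to both.

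The paper's argument avoids this by never restricting $\hat H$ to a subsurface. After capping off $\partial S$ to pass to the closed surface $\hat S$ and reducing (via Lemma~\ref{first} and its corollary) to showing $\hat H$ is pseudo-Anosov, it assumes an invariant multicurve $\gamma$ exists and works directly with geodesic representatives relative to the doubled laminations $\lambda^s\times\{0,1/2\}$ and $\lambda^u\times\{0,1/2\}$, the reducing curve $A$, and a regular neighborhood $X$ of $A$; the contradiction comes from intersection-number estimates comparing $\hat H_0(\gamma)$ with $T_C^{-1}(\gamma)$ in Lemmas~\ref{path}--\ref{small} and Corollary~\ref{last}. That more careful bookkeeping is needed precisely because the subsurface decomposition you try to exploit is the thing $T_C$ was introduced to demolish.
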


\begin{proof} To simplify the exposition, it helps to pass to a closed hyperbolic surface $\hat{S}$ naturally associated to $S$. Topologically, $\hat{S}$ is defined by setting $$\hat{S}=S\cup_{\partial} D^2,$$
the surface obtained from $S$ by capping off $\partial S$ with a disk. Notice that $$\hat{S} = F_0\cup \hat{P}\cup F_{1/2}\, ,$$ where $\hat{P}=P\cup_{\partial} D^2$ is an annulus.  
The homeomorphism $H_0$ extends to a homeomorphism $\hat{H}_0:\hat{S}\to\hat{S}$ given by
$$ \hat{H}_0(z)  = \left\{ \begin{array}{ll} (x,1/2) & \mbox{ if } z=(x,0) \mbox{ lies in } F_0 \\
                           (h(x),0) & \mbox{ if } z = (x,1/2) \mbox{ lies in } F_{1/2} \\
                           (T_{\partial F_0} T_{\partial F_{1/2}})^{-1} \hat{r}(z) & \mbox{ if } z \mbox{ lies in } \hat{P} \end{array} \right.$$
where $\hat{r}:\hat{P}\to\hat{P}$ is a homeomorphism, uniquely determined up to isotopy, extending $r:P\to P$.

Now consider the closed hyperbolic surface $F_0\cup_f F_{1/2}$, obtained by gluing the hyperbolic surfaces $F_0$ and $F_{1/2}$ by the isometry $f:\partial F_0\to \partial  F_1$ given by $f(x,0) = (x,1/2)$.  Choose a  diffeomorphism from $\hat{S}$  to $F_0\cup_f  F_{1/2}$, and use this diffeomorphism to pull back the hyperbolic metric on $F_0\cup_f F_{1/2}$ to a hyperbolic metric on $\hat{S}$. Let $A$ denote the geodesic  simple closed curve which is the pull back to $\hat{S}$  of $\partial F_0 = \partial F_{1/2}\subset F_0\cup_f F_{1/2}$. Notice that 
 $$ \hat{H}_0(z)  = \left\{ \begin{array}{ll} (x,1/2) & \mbox{ if } z=(x,0) \mbox{ lies in } F_0 \\
                           (h(x),0) & \mbox{ if } z = (x,1/2) \mbox{ lies in } F_{1/2} \\
                          T_A^{-2} \,\, \hat{r}(z) & \mbox{ if } z \mbox{ lies in } \hat{P} \end{array} \right.$$

Set $$\hat{H}=T_C \circ \hat{H}_0\, .$$

We will use the following characterization of periodic and reducible maps (\cite{Th, CB}). 

\begin{lemma}\label{first} A homeomorphism $h$ of a surface $S$ has a periodic or reducible representative if and only if there exists a possibly immersed multi-curve $\gamma = \rho_1 \cup \dots \cup \rho_n$ where each $\rho_i$ is an essential, embedded, curve in $S$, and, up to isotopy, $\gamma$ is fixed by $h$.  In the periodic case, each $\rho_i$ may be assumed to be non-separating, and in the reducible case, $\gamma$ may be chosen to be embedded. \qed
\end{lemma}

\begin{cor} If the Thurston representative $\hat\Psi$ of $\hat H$ is pseudo-Anosov, then the Thurston representative $\Psi$ of $H$ is also pseudo-Anosov.
\end{cor}

\begin{proof} By contradiction, suppose that $H$ is either periodic or reducible, and let $\gamma$ be the multi-curve guaranteed by Lemma~\ref{first}.  Since the curves $\gamma$, $H_0(\gamma)$, and $T_C^{-1}(\gamma)$ and the image of the isotopy between $H_0(\gamma)$ and $T_C^{-1}(\gamma)$ all lie in $S$ and therefore in  $\hat{S}$, it follows that $H(\gamma)$ is isotopic to $\gamma$.  Furthermore, no component of $\gamma$  is  boundary parallel in $S$.  Thus applying Lemma~\ref{first} to $\gamma \subset \hat S$ shows that $\hat H$ is either periodic or reducible.
\end{proof}

Theorem~\ref{H} therefore follows from the following theorem. \end{proof}

\begin{thm}\label{hatH}  The Thurston representative $\hat{\Psi}$ of $\hat{H}$ is pseudo-Anosov.
\end{thm}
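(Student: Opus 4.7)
The plan is to apply Lemma~\ref{first} contrapositively: it suffices to show that $\hat{H}$ preserves no isotopy class of essential multi-curve in $\hat{S}$, which will force $\hat{\Psi}$ to be pseudo-Anosov. I therefore suppose, for contradiction, that $\gamma \subset \hat{S}$ is an essential multi-curve with $\hat{H}(\gamma) \sim \gamma$, and will derive a contradiction from the pseudo-Anosov hypothesis on $h$ and the geometric placement of $C$.

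The first step is an explicit description of $\hat{H}^2$. Using the conjugation identity $\hat{H}_0 T_C \hat{H}_0^{-1} = T_{\hat{H}_0(C)}$, I would write
\[
\hat{H}^2 \;=\; T_C \, T_{\hat{H}_0(C)} \, \hat{H}_0^2.
\]
From the piecewise definition of $\hat{H}_0$, its square $\hat{H}_0^2$ restricts to $h$ on each of $F_0$ and $F_{1/2}$, and restricts to $T_A^{-4}$ on the annulus $\hat{P}$ (using $\hat{r}^2 = \mathrm{id}$ and $\hat{r}(A) = A$).

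Next I isotope $\gamma$ so that $|\gamma \cap A|$ is minimal, and split into two cases. If $\gamma \cap A = \emptyset$, then $\gamma = \gamma_0 \sqcup \gamma_{1/2}$ with each $\gamma_i \subset F_i$. Because $\hat{H}$ interchanges the two sides of $A$ up to twisting near $A$ and $C$, the invariance of $\gamma$ pairs $\gamma_0$ with $\gamma_{1/2}$, so that $\hat{H}(\gamma_0) \sim \gamma_{1/2}$ in $F_{1/2}$ and hence $\hat{H}^2(\gamma_0) \sim \gamma_0$ in $F_0$. By the formula above, $\hat{H}^2|_{F_0}$ is $h$ composed with the restrictions to $F_0$ of $T_C$, $T_{\hat{H}_0(C)}$, and $T_A^{-4}$; the first two restrict to twists supported near the non-separating arcs $C_0 = C \cap F_0$ and $\hat{H}_0(C) \cap F_0$, whose endpoints lie on $A$. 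If instead $\gamma \cap A \neq \emptyset$, the same analysis yields a $\hat{H}^2|_{F_0}$-invariant essential proper multi-arc $\gamma \cap F_0$ in $F_0$.

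The main obstacle is completing each case by showing the map $\hat{H}^2|_{F_0}$ cannot preserve any essential multi-curve or multi-arc in $F_0$. The key ingredient is that $h$ is pseudo-Anosov on $F$, hence admits no invariant essential curves or arcs up to isotopy; the hypothesis $c(h)=0$ controls the boundary behavior of $h$ enough that the additional boundary twist $T_A^{-4}$ does not introduce new invariance, and the non-separating choice of the arcs $C_0$ and $\hat{H}_0(C) \cap F_0$ ensures the extra arc-twists genuinely mix the dynamics rather than cancel out. A careful analysis of the invariant laminations $\lambda^s$ and $\lambda^u$ of $h$ near $A$, combined with an estimate of how the arc-twists act on these laminations, would close out the contradiction and complete the proof.
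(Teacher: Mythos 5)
The overall contradiction strategy (apply Lemma~\ref{first} and suppose an essential multi-curve $\gamma$ satisfies $\hat{H}(\gamma)\sim\gamma$) matches the paper, but the way you then try to reduce the problem has a real gap.

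The central problem is your claim that $\hat{H}^2|_{F_0}$ can be read as ``$h$ composed with the restrictions to $F_0$ of $T_C$, $T_{\hat{H}_0(C)}$, and $T_A^{-4}$.'' The Dehn twists $T_C$ and $T_{\hat{H}_0(C)}$ are about curves that cross $A$ and run through the annulus $\hat{P}$ as well as both $F_i$'s; they do not preserve $F_0$, and their ``restriction to $F_0$'' is not a self-homeomorphism of $F_0$. For the same reason the statement in your $\gamma\cap A=\emptyset$ case that $\hat{H}(\gamma_0)\sim\gamma_{1/2}$ in $F_{1/2}$ is unjustified: applying $T_C$ pushes $\gamma_0$ across $A$ out of $F_0\cup F_{1/2}$, so you cannot split the invariance of $\gamma$ into componentwise statements inside each fiber. (In fact the paper rules out $\gamma\cap A=\emptyset$ outright: if $\gamma$ were disjoint from $C$ it would be $\hat{H}_0$-invariant and hence isotopic to $A$, which does meet $C$; and then $\gamma\cap A\neq\emptyset$ follows by comparing $|T_C^{-1}(\gamma)\cap A|$ with $|\gamma\cap A|=|\hat{H}_0(\gamma)\cap A|$.) The rest of your argument is acknowledged to be a sketch, and it is precisely there that the content lies: the paper does not pass to $\hat{H}^2$ or attempt a purely $F_0$-internal statement at all, but instead works directly with the relation $\hat{H}_0(\gamma)\sim T_C^{-1}(\gamma)$, restricts to the complementary region $W$ of $\lambda^s\times\{0,1/2\}$ containing $A$, sets up a neighborhood $X$ of $A$ containing all triangular disk regions of $\hat{S}\setminus(\gamma\cup A\cup C)$, and then uses signed intersection counts of the $\gamma$-arcs with the $C$-arcs $\sigma_1,\sigma_2$ inside $X$ to show that the geodesic of $\hat{H}_0(\gamma)$ twists strictly more about $A$ than $T_C^{-1}(\gamma)$ can. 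That combinatorial/geodesic bookkeeping near $A$ is the engine of the proof, and it is exactly what is missing from your outline. You would need either to supply an argument of that kind or to genuinely rebuild the reduction so that it respects the decomposition of $\hat{S}$; as written, the reduction to $\hat{H}^2|_{F_0}$ does not.
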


\begin{proof} 

 Suppose  instead that $\hat{\Psi}$ is periodic or reducible. Choose $\gamma$ as guaranteed by Lemma~\ref{first} so that it is either an embedded curve or is a union of non-separating curves.  Necessarily $\hat{H}_0(\gamma)$ is isotopic to $T^{-1}_C(\gamma)$.  We next establish notation for a sequence of lemmas that will be used to derive a contradiction in Corollary~\ref{last}. 

Let $\lambda^s$ and $\lambda^u$ denote the stable and unstable geodesic laminations of $\psi$. 
Let $L^s_1, ..., L^s_n$ , $n\ge 1$, denote the leaves of $\lambda^s$ bounding the complementary region of $\lambda^s$ containing $\partial F$, ordered cyclically about $\partial F$.
Similarly, let $L^u_1, ..., L^u_n$ denote the leaves of $\lambda^u$ bounding the complementary region of $\lambda^u$ containing $\partial F$, ordered cyclically about $\partial F$.

The unstable prongs, i.e., the geodesics perpendicular to $\partial F$ found between each consecutive pair 
$L^u_i, L^u_{i+1}$, canonically cut each $L_i^s$ into two open intervals $L_{i-}^s$ and $L_{i+}^s$. Similarly, the stable prongs canonically cut each $L_i^u$ into two open intervals $L_{i-}^u$ and $L_{i+}^u$. Since $c(h)=0$, we may assume that both $\psi$ and $h$ fix the leaves $L_i^s, L_i^u, L_{i\pm}^s$, and $L_{i\pm}^u$, for $1\le i\le n$. $L_i^s$, the leaves $L_i^u$, the leaves $L_{i\pm}^s$, and the leaves $L_{i\pm}^u$, $1\le i\le n$. 

To simplify the exposition (and the notation), we now assume that $\lambda^s\times\{0,1/2\}$, $\lambda^u\times\{0,1/2\}$, $A$, $C$, and $\gamma$ are each geodesic in $\hat{S}$. Let $\eta$ denote the geodesic representative of $\hat{H}_0(\gamma)$, and choose  an isotopy representative of  $T_C^{-1}$  so that  $T_C^{-1}(\gamma)$ is a geodesic. 

Let $W$ denote the closed complementary region of $\lambda^s\times \{0,1/2\}$ which contains $A$.
Since $\gamma$ meets $\lambda^s\times \{0,1/2\}$  efficiently, so does $\hat{H}_0(\gamma)$. It follows that we can isotope $\hat{H}_0(\gamma)$ to be geodesic while preserving this efficient intersection with $\lambda^s\times \{0,1/2\}$. The following therefore holds. 

\begin{lemma}\label{path} If a component 
 of $\hat{H}_0(\gamma)\cap W$ is a path connecting $L^s_i\times\{0\}$ and $L^s_j\times\{1/2\}$ for some $i,j$, $1\le i,j\le n$, then the corresponding component of  $\eta\cap W$ is also a path connecting $L^s_i\times\{1/2\}$ and $L^s_j\times\{0\}$.  \qed
\end{lemma}

\begin{lemma}\label{comp}
Let $O$ be a geodesic simple closed curve in $\hat{S}$, and for each integer $l$, let $O_l$ denote the geodesic representative of $T_A^l(O)$. If a component of $O\cap W$ is a path connecting $L^s_i\times\{0\}$ and $L^s_j\times\{1/2\}$ for some $i,j$, $1\le i,j\le n$, then the corresponding component of  $O_l\cap W$ is also a path connecting $L^s_i\times\{1/2\}$ and $L^s_j\times\{0\}$. 
\end{lemma}

\begin{proof}
The leaves $L_i^s\times\{0\}$ which intersect $O$ are uniquely determined by the constraint that the intersection $O\cap (L_i^s\times\{0\})$ is efficient. This efficiency of intersection is unaffected by composition with $T_A^l$.
\end{proof}

Symmetric statements hold relative to the unstable lamination $\lambda^u\times \{0,1/2\}$.

\begin{lemma} Both $\gamma\cap C\ne \emptyset$ and $\gamma\cap A\ne \emptyset$.
\end{lemma}

\begin{proof} If $\gamma\cap C=\emptyset$, then $\gamma$ is fixed by $T_C^{-1}$ and hence, up to isotopy, also by $\hat{H}_0$.  Since $\psi$ is pseudo-Anosov, $\hat{H}_0$ fixes only $A$, up to isotopy, and it follows that $\gamma\cap C \ne \emptyset$. 

It follows that if $\gamma\cap A =  \emptyset$, then necessarily  $T_C^{-1}(\gamma)\cap A \ne  \emptyset$. But we know
$\hat{H}_0(\gamma)=T^{-1}_C(\gamma)$ and hence  $$|T^{-1}_C(\gamma)\cap A| =|\hat{H}_0(\gamma)\cap A|=|\hat{H}_0(\gamma)\cap \hat{H}_0(A)| = |\gamma\cap A| =0 \, .$$ So $\gamma\cap A \ne \emptyset$
 (and $T_C^{-1}(\gamma)\cap A \ne  \emptyset$).
\end{proof}

We now use $C$ to get a measure of the amount of twisting of $\hat{H}_0(\gamma)$ and $T_C^{-1}(\gamma)$ respectively about $A$. We do this as follows. First consider the components of $\hat{S}\setminus{(\gamma\cup A\cup C)}$. Call such a complementary region a {\it triangular disk region} if it is a disk with piecewise geodesic boundary consisting of exactly three geodesic subarcs, one contained in  each of $\gamma$, $A$ and $C$ respectively.
Choose a regular neighborhood $X$ of $A$ so that 
\begin{enumerate}
\item all such triangular disk regions are contained in the interior of $X$,
\item all intersections of $\partial X$ with $C$, $\gamma$, $\hat{H}_0(\gamma)$, and $T_C^{-1}(\gamma)$ are transverse,
\item $C\cap X$ consists of two essential embedded arcs: $\sigma_1$ and $\sigma_2$,
\item $\gamma\cap X$ consists of $a$ essential embedded arcs: $\gamma_1,...,\gamma_a$, for some $a\ge 2$,
\item $\eta\cap X$ consists of $a$ essential embedded arcs: $\eta_1,...,\eta_a$,
\item $T_C^{-1}(\gamma)\cap X$ consists of $a$ essential embedded arcs: $\tau_1,...,\tau_a$, and
\item  for $1\le i\le 2$ and $1\le j\le a$, each of the arcs $\sigma_i$,  $\gamma_j$, $\eta_j$, and $\tau_j$ has nonempty (necessarily minimal) intersection with each of $\cup L_i^s\times \{0\}$, $\cup L_i^s\times\{1/2\}$,  $\cup L_i^u\times \{0\}$,  and $\cup L_i^u\times\{1/2\}$.
\end{enumerate} 
In (4)--(6) we are using the fact that $\hat{H}_0(A)=A$,  thus  $$|T_C^{-1}(\gamma)\cap A| = |\hat{H}_0(\gamma)\cap A| = |\gamma\cap A|= a\, .$$
Choose the indices $j$,  $1\le j\le a$, so that $\eta_j$ is the component corresponding to $\hat{H}_0(\gamma_j)$.

Orient the arcs $\sigma_1,\sigma_2$, $\gamma_i$, $\eta_i$, and  $\tau_i$, $1\le i \le a$, so that they run from $\partial X\cap (F\times \{0\})$ to $\partial X\cap (F\times \{1/2\})$.

\begin{lemma}\label{small} For all $i$ and $j$, $1\le i\le a$ and  $1\le j\le 2$, 
 $|\langle \tau_i,\sigma_j\rangle| \le 1$.
\end{lemma}

\begin{proof}
Since $\tau_i \subset T_C^{-1}(\gamma)$ and $\sigma_j \subset C$ each component $\tau_i$ can meet each of $\sigma_1$ and $\sigma_2$ at most once. 
\end{proof}

\begin{lemma} For some $i_0$ and $j_0$, $1\le i_0\le a$ and  $1\le j_0\le 2$,  $\langle\gamma_{i_0}, \sigma_{j_0}\rangle <0$.
\end{lemma}

\begin{proof} 
Points of intersection $\gamma\cap C$ lie either in $X$ or outside $X$. The condition that all  triangular disk regions lie in $X$ guarantees that any point of intersection $\gamma\cap C$ lying outside $X$ results in increased geometric intersection of $T_C^{-1}(\gamma)$ with $A$. And any point of intersection $\gamma\cap C$ lying inside $X$ and with positive intersection number $\langle \gamma_i,\sigma_j\rangle$  results in increased geometric intersection of $T_C^{-1}(\gamma)$ with $A$. Since $\gamma\cap C$ is nonempty, it follows that if all intersection numbers $\langle \gamma_i,\sigma_j\rangle$ are nonnegative, then necessarily $|T_C^{-1}(\gamma)\cap A| > |\gamma\cap A|$. This is impossible.
\end{proof}

Rechoose indices as necessary so that $$\langle \gamma_1,\sigma_1\rangle < 0$$ and $\eta_1$ is the component of $\eta$ corresponding to $\hat H_0(\gamma_1)$.

\begin{cor}\label{last} The geodesic representative of $\hat H_0(\gamma)$, $\eta$, and $T_C^{-1}(\gamma)$ are not equal, and therefore $\hat{H}(\gamma)$ is not isotopic to $\gamma$.
\end{cor}

\begin{proof}
Focus on $\gamma_1$ and $T_A^2(\eta_1)$, the latter arc isotoped rel endpoints to be geodesic and oriented so that it runs from $\partial X\cap (F\times \{0\})$ to $\partial X\cap (F\times \{1/2\})$. The arc $\gamma_1\cap W$ is a path from $L_i^s\times\{0\}$ to $L_j^s\times\{1/2\}$ for some $i,j,\, 1\le i,j\le n$.  Since $\hat{H}_0$ switches $F\times \{0\}$ and $F\times \{1/2\}$, and  since $c(h)=0$, it follows from Lemma~\ref{comp} that both $\eta_1\cap W$ and  $T_A^2(\eta_1)\cap W$ are paths from $L_j^s\times\{0\}$ to $L_i^s\times\{1/2\}$. There are two possibilities: either $T_A^2(\eta_1)\cap\gamma_1 = \emptyset$ or 
$T_A^2(\eta_1)\cap\gamma_1 \ne \emptyset$.

Consider first the case that  $T_A^2(\eta_1)\cap\gamma_1 = \emptyset$. In this case, 
$$\langle T_A^2(\eta_1),\sigma_1\rangle\le 0$$ and hence
$$\langle \eta_1,\sigma_1\rangle \le -2\, .$$ It follows from Lemma \ref{small} that
$\eta_1\ne \tau_m$ for all $m, 1\le m\le a$.

Next consider the case that $T_A^2(\eta_1)\cap\gamma_1 \ne \emptyset$. Recall that $\eta_1$ is the portion of the geodesic, corresponding to $\gamma_1$, that is formed by applying $\hat H_0$ to $\gamma$.  The action of $\hat H_0$ on $W$ is $T^{-2}_A$, thus $T_A^2(\eta_1)$ corresponds to acting on $\gamma_1$ by the map $\hat r$ used in the definition of $\hat H_0$.  If $i \ne j$, $\hat r$ would force $T_A^2(\eta_1)\cap\gamma_1 = \emptyset$.  It follows that $i=j$.  Moreover, $\langle \gamma_1,\sigma_1\rangle < 0$ implies $|\langle T_A^2(\eta_1),\sigma_1\rangle|\le1$.

If $\langle T_A^2(\eta_1),\sigma_1\rangle\le 0$, the argument of the first case applies.
So we may assume $\langle T_A^2(\eta_1),\sigma_1\rangle= 1$. This is possible only if
$\gamma_1$, $T_A^2\eta_1$ and $\sigma_1$ are all paths connecting $L_i^s\times\{0\}$ to $L_i^s\times\{1/2\}$, pairwise isotopic through paths connecting $L_i^s\times\{0\}$ to $L_i^s\times\{1/2\}$. In other words, up to composition by $T_A^l$ for some integer $l$, they must lie as shown in Figure~\ref{intersect}.a. The arc $\eta_1$ is added in Figure \ref{intersect}.b. \begin{figure}[ht]
  	\centering
   	\includegraphics[width=3truein]{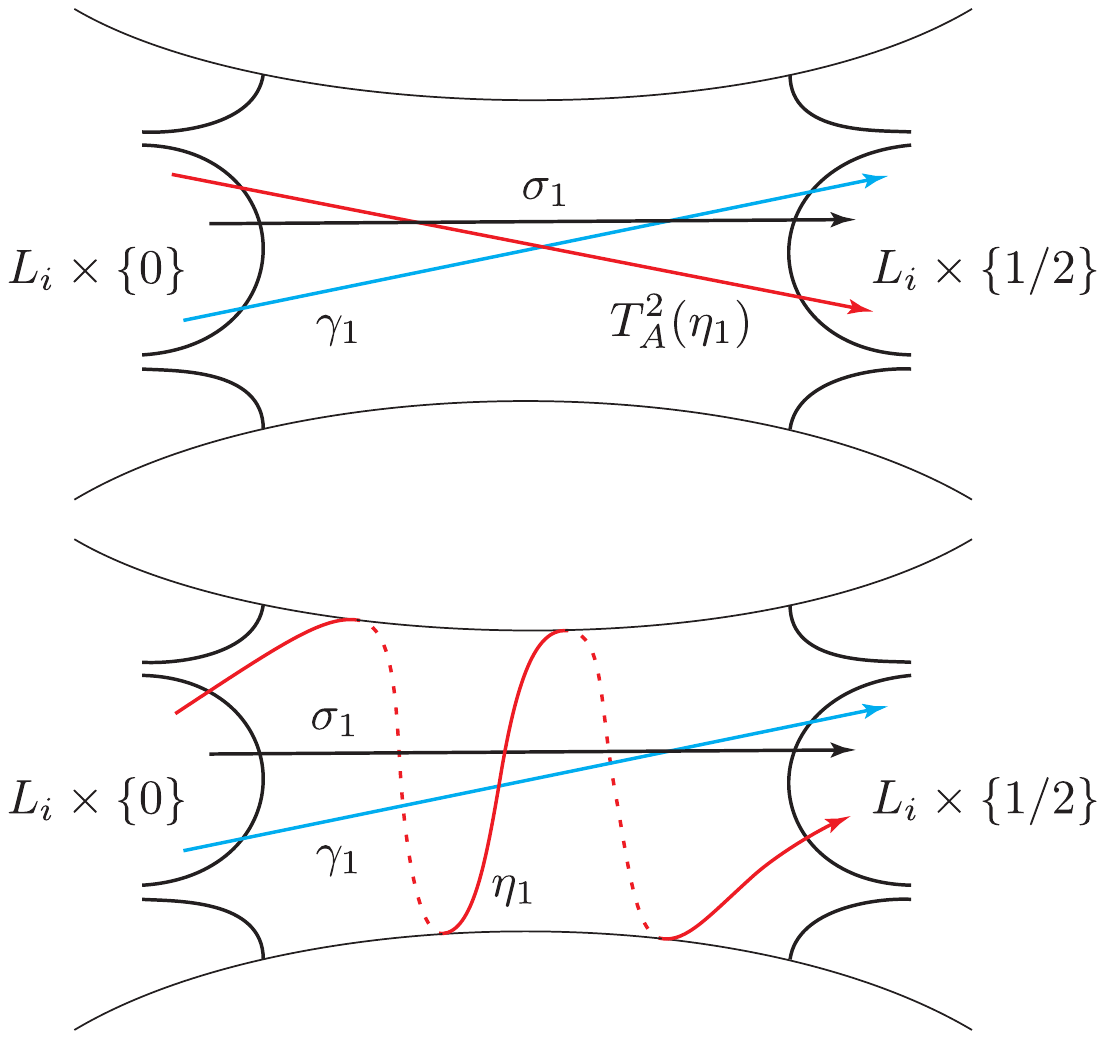} 
	\caption{}
	\label{intersect}
\end{figure}

The next step is to show there is a geodesic arc, $\beta$, properly embedded and essential in $W$, such that  
$\beta$ is disjoint from $\gamma_1\cup T_A^2(\eta_1)$. When $n>1$, choose  $\beta$ to be a geodesic  path from $L_p^s\times\{0\}$ to $L_p^s\times\{1/2\}$ for some $p\ne i$.
When $n=1$, we argue similarly but  work  instead with the leaves $L_{1\epsilon}^s$,   $\epsilon\in\{\pm\}$. The arcs $\gamma_1$, $T_A^2\eta_1$ and $\sigma_1$ are all paths connecting
$L_{1\epsilon}^s\times\{0\}$ to $L_{1\epsilon}^s\times\{1/2\}$, pairwise isotopic through paths connecting
$L_{1\epsilon}^s\times\{0\}$ to $L_{1\epsilon}^s\times\{1/2\}$. We then choose $\beta$ to be a geodesic path from $L_{1\delta}^s\times\{0\}$ to $L_{1\delta}^s\times\{1/2\}$, where $\delta = -\epsilon$.

Thus in either case, $\beta$ satisfies $|\beta\cap \eta_1|=2$, whereas $|\beta\cap\tau_m|\le 1$ for all $l, 1\le l\le a$. So again we may conclude $\eta_1\ne \tau_m$ for all $m, 1\le m\le a$.
\end{proof}

Theorem~\ref{hatH} now follows from Corollary~\ref{last}\end{proof}

\begin{prop}\label{H_0'} With $H_0'$ as defined in Lemma~\ref{H_0formula}, and $H'=T_C \circ H'_0$, $1/2 \le c(H')\le1$.
\end{prop}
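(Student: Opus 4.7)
The plan is to combine a mod-one restriction on $c(H')$ with arc-based estimates from Propositions~\ref{formula} and~\ref{rightveeringi}.

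First, observe that $H'$ is freely isotopic to $H = T_C \circ H_0$: they agree on the original surface $S$, and the modification $H_0'$ introduces only the shear $R$ on the added collar, which is absorbed under free isotopy. By Theorem~\ref{H}, their common Thurston representative $\Psi$ is pseudo-Anosov. Since the isometry $r$ restricts to a half-rotation on the third boundary component of $P$ (namely $\partial S$) and $T_C$ acts trivially on $\partial S$, the map $H$ restricts to a half-rotation on $\partial S$. Hence $\Psi$ permutes any periodic orbit $\{x_0, \dots, x_{q-1}\}$ on $\partial S$ by the shift $x_i \mapsto x_{i + q/2 \bmod q}$, forcing $q$ even and $c(H') \equiv 1/2 \pmod 1$.

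For the upper bound $c(H') \le 1$, I would exhibit an arc $\alpha$ with $i_{H'}(\alpha) = 0$. A natural candidate is a short arc lying entirely in the collar $\partial S \times I$ and disjoint from $C$: since $T_C$ acts as the identity on $\alpha$, the image $H'(\alpha) = R(\alpha)$ is the half-rotation shear of $\alpha$, and a direct inspection shows that no initial intersection of $\alpha$ with $H'(\alpha)$ contributes to $i_{H'}$. By Proposition~\ref{rightveeringi} applied to $H'$ (right-veering, as argued next), one concludes $c(H') \le i_{H'}(\alpha) + 1 = 1$.

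For the lower bound $c(H') \ge 1/2$, the mod-one restriction reduces the task to showing $c(H') > 0$, i.e., that $H'$ is right-veering. The positive Dehn twist $T_C$ is right-veering on arcs crossing $C$, and the shear $R$ is a right half-rotation on the collar. For an arbitrary arc $\alpha$ starting on $\partial S$, a case analysis (initial behavior in the collar, then possibly crossing $P$, $C$, or $F_0 \cup F_{1/2}$) shows that these two positive contributions outweigh the negative Dehn twists $T_{\partial F_0}^{-1} T_{\partial F_{1/2}}^{-1}$ appearing in $H_0'|_P$; by Proposition~\ref{RV}, $c(H') > 0$.

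The principal obstacle is the detailed verification of both the intersection count $i_{H'}(\alpha) = 0$ in the upper-bound argument and the right-veeringness globally. The interaction between the interior twist $T_C$ and the negative twists inside $H_0'|_P$ must be controlled so as not to cancel the right-veering introduced by $R$, and the best-bet is a picture-based analysis analogous to the \emph{news}-diagram argument in the proof of Theorem~\ref{stabilizedc(h)}, tracking how the initial segment of each arc threads through the pants region and the collar before any possible return.
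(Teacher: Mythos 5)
Your route is genuinely different from the paper's, but each of its three main steps has a gap. The central problem is the mod-one restriction. You infer from the fact that $H = T_C \circ H_0$ restricts to a half-rotation on $\partial S$ that the Thurston representative $\Psi$ must permute a periodic orbit on $\partial S$ by the shift $x_i \mapsto x_{i+q/2}$. But the restriction of a homeomorphism to $\partial S$, and in particular its rotation number, is not invariant under free isotopy: the free isotopy carrying $H$ to $\Psi$ may rotate $\partial S$ by an arbitrary amount, and the fractional Dehn twist coefficient is precisely the device that records this rotation. Asserting that the rotation number survives the isotopy is therefore essentially assuming the conclusion. (Note also that if this step were valid, your argument would immediately give $c(H') = 1/2$ once $0 < c(H') \le 1$ were known, a strictly stronger statement than Prop.~\ref{H_0'}; the paper obtains $c(H') = 1/2$ only after combining Prop.~\ref{H_0'} with Gabai's theorem~\ref{Gatwist} in Corollary~\ref{H'inS3}, which is evidence that the mod-one restriction is not available by such elementary means.)

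The upper-bound and lower-bound steps are also incomplete. An arc lying entirely in the collar $\partial S \times I$ is boundary-parallel and hence inessential; its geodesic representative degenerates, and Proposition~\ref{rightveeringi} gives no information for such an arc. And the right-veering claim is left to an unspecified ``case analysis'' that must control the interaction of $T_C$ and $R$ with the negative twists $T_{\partial F_0}^{-1} T_{\partial F_{1/2}}^{-1}$; this is exactly the kind of delicate bookkeeping the paper avoids. The paper's proof instead fixes a single essential arc $\alpha$, the cocore of the $1$-handle fixed by the involution $r\colon P \to P$, and computes $i_{H'}(\alpha) = 0$ and then $i_{H'^2}(H'^{-1}(\alpha)) = 1$ by tracking the initial segments of $H'(\alpha)$ and $H'^{-1}(\alpha)$. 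The positivity of the second count forces $c(H'^2) > 0$, so Proposition~\ref{rightveeringi} applies to give $1 \le c(H'^2) \le 2$, and Proposition~\ref{formula} ($c(H'^2) = 2c(H')$) yields $1/2 \le c(H') \le 1$ with both bounds and the right-veering requirement handled in one stroke.
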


\begin{proof}  Let $\alpha \subset S$ be the arc fixed by the involution $r:P \to P$.  To estimate $c(H')$, we first compute $i_{H'}(\alpha)$.  Loosely speaking, it is enough to compute just $R$ and $T_C$ on the initial portion of $\alpha$.  More precisely, $H'(\alpha)=\beta_1*\beta_2$ where $\beta_1$ is an arc ending transversely on $F\times \{0\}$, and $\beta_2$ is an arc which starts on $F\times \{0\}$ and has an essential first return to $F\times \{0\}$.  Figure~\ref{c(H')} shows $\beta_1$ which is computed by applying $R$ and $T_C$ to the initial portion of $\alpha$.  Only the first intersection of $\beta_1$ and $\alpha$ can contribute to $i_{H'}(\alpha)$, but this does not yield a boundary parallel union of initial segments.  It follows that $i_{H'}(\alpha)=0$, and thus by Proposition~\ref{rightveeringi}, $0 \le c(H') \le 1$.

\begin{figure}[ht]
  	\centering
   	\includegraphics[width=3.7truein]{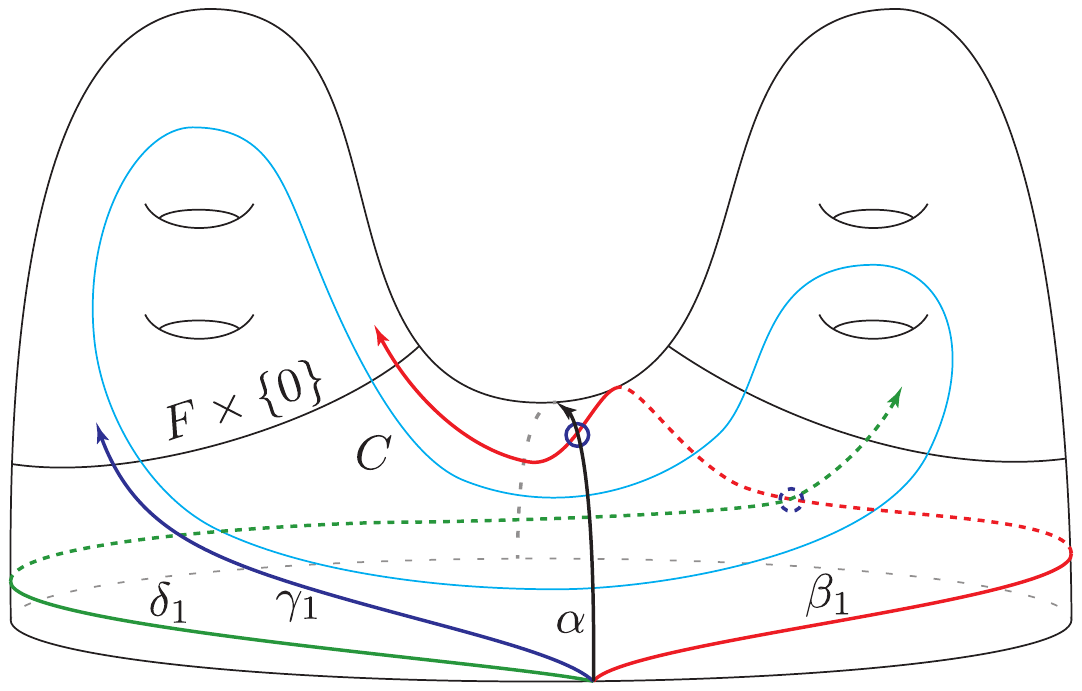} 
	\caption{}
	\label{c(H')}
\end{figure}

The next step is to estimate $c(H'^2)$ by computing $H'^{-1}(\alpha)$ and comparing it to $H'(\alpha)$.  Towards this end, write $T_C^{-1}(\alpha)=\gamma_1*\gamma_2$ where $\gamma_1$ is essentially truncated at $F\times \{0\}$ as above.  To compute $H'^{-1}(\alpha)=\delta_1*\delta_2$ where $\delta_1$ is essentially truncated at $F\times \{1/2\}$ by applying the remaining portion of $H'^{-1}$ to $\beta_1$.  The single intersection of $\gamma_1$ and $\beta_1$ is enough to conclude that $i_{H'^2}(H'^{-1}(\alpha))=1$.  Thus by Lemma~\ref{rightveeringi} we have $1\le c(H'^2)\le 2$ and it follows by Lemma~\ref{formula} that $1/2\le c(H') \le 1$.
\end{proof}

We show in the next section, Corollary~\ref{H'inS3}, that in $S^3$, $c(H')$ can only be $1/2$.

\section{Stabilization in $S^3$}\label{S3iS}
The first portion of this section is devoted to proving the result, Theorem~\ref{onlyvalues},  that the only values fractional Dehn twist coefficients take on in $S^3$ are $0$ and $1/n$ where $n$ is an integer satisfying $|n|\ge 2$.

This leads to the statement of Conjecture~\ref{main1}, or in light of Theorem~\ref{onlyvalues}, the equivalent statement, Conjecture~\ref{main2}, either of which imply that a fibred knot in $S^3$ with fractional Dehn twist coefficient $1/2$ can never be destabilized. This consequence, in the most important case when the monodromy is pseudo-Anosov, was first stated,  without proof,
 by Gabai in \cite{Ga3}. 

Part of our interest in this result is that it appears to depend on deep facts about $S^3$, most likely Gabai's thin position strategy, in an essential way.  Indeed, very simple examples, such as the examples of Proposition~\ref{lenspace}, show the result fails even in lens spaces.

We begin by considering more closely the case that the Thurston representative $\phi$ of the  monodromy $h$ is either periodic or reducible with $\phi_0$ periodic.
\begin{prop}\cite{Seifert} Let $S^3=(S,h)$ be an open book decomposition which has connected binding $k$. If the Thurston representative of $h$ is periodic, then $k$ is the unknot or  a 
$(p,q)$-torus knot for some relatively prime integers $p$ and $q$ satisfying $|p|>1$.
\end{prop}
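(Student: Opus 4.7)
Let $\phi$ be the periodic Thurston representative of $h$, of finite order $n\geq 1$. Since $\phi$ is freely isotopic to $h$, the mapping tori $M_\phi$ and $M_h$ are homeomorphic, and $M_h$ is the knot exterior $S^3\setminus\mathrm{int}\,N(k)$. My plan is to exhibit this exterior as a Seifert fibered space and then quote the classical fact that a knot in $S^3$ with Seifert fibered exterior must be the unknot or a $(p,q)$-torus knot.

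To produce the Seifert fibration, write $M_\phi=(S\times\mathbb{R})/{\sim}$ with $(x,t)\sim(\phi(x),t-1)$. The translation flow $(x,t)\mapsto(x,t+s)$ on $S\times\mathbb{R}$ commutes with this gluing and so descends to a flow on $M_\phi$. Because $\phi^n=\mathrm{id}$, translation by $n$ is trivial, so the flow factors through an action of $S^1=\mathbb{R}/n\mathbb{Z}$. Every orbit is a circle (of length dividing $n$, with shorter orbits arising when some iterate of $\phi$ fixes the basepoint), so the action exhibits $M_\phi$ as Seifert fibered over the $2$-orbifold $S/\langle\phi\rangle$. If $\phi$ restricts to the identity on $\partial S$, then as an orientation-preserving isometry fixing a boundary arc pointwise, $\phi$ must equal the identity on all of $S$; in that case $h$ is a power of $\tau_{\partial S}$ and the condition $S^3=M_h\cup N(k)$ forces $S$ to be a disk, so $k$ is the unknot. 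Otherwise $\phi$ rotates $\partial S$ nontrivially, the $S^1$-orbits meet $\partial N(k)$ in a slope distinct from the meridian, and one can cap off $\partial N(k)$ by a Seifert fibered solid torus. This realizes $S^3$ itself as Seifert fibered with $k$ as a (possibly exceptional) fiber, and the classical classification of Seifert fibrations of $S^3$ forces $k$ to be a torus knot of some type $(p,q)$ with $\gcd(p,q)=1$ and $|p|>1$.

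The hard part is the boundary/extension analysis: one must verify that the $S^1$-orbits on $\partial N(k)$ are not meridional (handled by the boundary-rotation dichotomy above) in order to extend the fibration over the binding. Once that is settled, the construction of the $S^1$-action on the exterior itself is automatic from the periodicity of $\phi$, and the remainder of the proof is a routine invocation of Seifert's classification.
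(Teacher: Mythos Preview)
The paper does not supply its own proof of this proposition: it is stated with the citation \cite{Seifert} and no argument is given. Your proof is the standard classical argument underlying that citation, and it is correct. The periodicity of $\phi$ produces a circle action on the mapping torus, hence a Seifert fibration of the knot exterior; your boundary dichotomy (either $\phi$ is the identity, forcing $S$ to be a disk by an $H_1$ count, or $\phi$ rotates $\partial S$ nontrivially, so the boundary fibers are non-meridional and the fibration extends over $N(k)$) is exactly what is needed to pass from a Seifert fibration of the exterior to one of $S^3$ with $k$ as a fiber, after which Seifert's classification of fibrations of $S^3$ finishes the job.

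One small remark: in the extension step you might make explicit that when $\phi|_{\partial S}$ is a nontrivial rotation of order $n$, each $S^1$-orbit on $\partial N(k)$ traverses the meridional direction $n$ times while winding nontrivially in the $\partial S$ direction, so its slope is visibly not $\mu$. This is implicit in what you wrote but is the one place a reader might pause.
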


\begin{prop} Let $S^3=(S,h)$ be an open book decomposition which has connected binding $k$. If the Thurston representative, $\phi$, of $h$ is reducible with $\phi_0$ periodic, then $k$ is a $(p,q)$-cable knot, for some relatively prime integers $p$ and $q$ satisfying $|p|>1$, and $c(h)=1/(pq)$.
\end{prop}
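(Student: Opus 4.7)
The plan is to isolate the Seifert fibered submanifold of $S^3 \setminus \mathring N(k)$ produced by the periodic piece $\phi_0$, identify it with a cable space using the topology of $S^3$, and then read off both the cable data and the slope of the periodic orbit on $\partial N(k)$.

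First I would let $S_0 \subset S$ be the component of the cut-open surface containing $\partial S = k$, and form the mapping torus $N_0 = (S_0 \times I)/\phi_0$. Because $\phi_0$ is a periodic isometry, $N_0$ is naturally Seifert fibered: each $\phi_0$-orbit in $S_0$ gives a fiber circle whose multiplicity equals the orbit size. The space $N_0$ embeds in $(S \times I)/h' \cong S^3 \setminus \mathring N(k)$ as the piece adjacent to $k$, with $\partial N_0 = \partial N(k) \cup T_1 \cup \cdots \cup T_r$, where each $T_i$ is the mapping torus of an $h'$-orbit of reducing curves bounding $S_0$. The preceding proposition covers the case $r=0$, so here $r \ge 1$ (since $\phi$ is reducible but not periodic).

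Next I would use the maximality of the reducing system and the connectedness of $k$ to argue that $S_0$ is planar with $r = 1$ and that $\phi_0$ permutes the non-$\partial S$ boundary components of $S_0$ as a single $p$-cycle for some $p \ge 2$. Then $N_0$ is a cable space, Seifert fibered over an annular orbifold with one cone point of order $p$. Alexander's theorem for tori in $S^3$ implies that $T = T_1$ bounds a solid torus $V$ on some side, and the side containing $k$, namely $N(k) \cup N_0$, cannot be a solid torus since $\phi_0$ acts nontrivially on a non-annular surface; hence the far side $V$ is a solid torus. Letting $J$ be its core, the cable-space structure identifies $k$ as the $(p,q)$-cable of $J$, where $q$ (taken $\bmod\, p$) records the amount of rotation of $\phi_0$ within one boundary orbit. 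Finally, a periodic orbit of the suspension flow on $\partial N(k)$ is a regular Seifert fiber of $N_0$ representing $\lambda + pq\, \nu$ in meridian-longitude coordinates of $k$, so $c(h) = 1/(pq)$ by the definition of the fractional Dehn twist coefficient.

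The main obstacle I anticipate is the first claim of the previous paragraph: rigorously showing that $S_0$ is planar with exactly one orbit of non-binding boundary components, so that $N_0$ really is a cable space rather than a more general Seifert-fibered piece. This should follow from the maximality of the reducing system, the incompressibility of the tori $T_i$ in $S^3 \setminus k$, and the classification of Seifert-fibered submanifolds of knot complements in $S^3$, but the argument must be set up to avoid circular appeal to the cabling conclusion one is trying to establish.
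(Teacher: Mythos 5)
Your high-level strategy -- isolate the Seifert-fibered piece of the knot exterior adjacent to $k$, identify it using the topology of $S^3$, and read off the cable slope from a regular fiber on $\partial N(k)$ -- is the right idea and is essentially the paper's. But two of the claims you make on the way are false, and together they break the argument.

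First, $S_0$ is generally \emph{not} planar. For a $(p,q)$-cable one can compute, from the Seifert fibration of the cable space, that $S_0$ has $p+1$ boundary components and Euler characteristic $q(1-p)$, hence genus $(p-1)(q-1)/2$; this is zero only when $q=1$. (Concretely: the base orbifold of the cable space is an annulus with one cone point of order $p$, so $\chi^{\mathrm{orb}} = (1-p)/p$, and $\phi_0$ must have order $pq$ to make $\chi(S_0)$ come out right.) No amount of massaging maximality will make $S_0$ planar. The paper's argument never needs planarity; it only needs that $S_0$ is not an annulus, which it gets from the fact that the reducing curves are not boundary-parallel.

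Second, and more seriously, you have the two sides of the torus $T$ reversed. You argue that the $k$-side $N(k)\cup N_0$ cannot be a solid torus and so the \emph{far} side $V$ is, and then you declare $k$ a cable of the core $J$ of that far-side $V$. This is contradictory on three counts. (a) If the side of $T$ disjoint from $k$ were a solid torus, $T$ would be compressible in $S^3\setminus k$ (a meridian disk of that solid torus gives a compression), but you also invoke incompressibility of $T$ in $S^3\setminus k$. (b) A knot cannot be a $(p,q)$-cable of a curve that lies on the opposite side of a torus from it: a cable must lie inside a solid torus neighborhood of its companion. (c) In fact $N(k)\cup N_0$ \emph{is} a solid torus -- extending the Seifert fibration over $N(k)$ (with $k$ a regular fiber) gives a fibration over a disk with one cone point of order $p$. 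This is exactly how the paper proceeds: it shows $T$ is incompressible in $S^3\setminus k$ (via the infinite cyclic cover), concludes that the solid torus $V$ guaranteed by Alexander's theorem must contain $k$, rules out $k$ being a singular fiber of the induced Seifert structure on $V$ (which would force $S_0$ to be an annulus), and hence identifies $k$ as a $(p,q)$-cable of the core $K_V$ of $V$. That last step -- ruling out $k$ as the singular fiber -- is also missing from your outline, and it is needed before one can say $k$ is a regular fiber and hence a cable.
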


\begin{proof}
Suppose $\phi$ is reducible with  $\phi_0:S_0 \to S_0$ periodic. Denote the boundary components of  $S_0$ by $C=k = \partial S, C_1,\dots, C_n$.  Let $T_i$ denote the torus in $S^3$ that is the union of flow lines of $\phi_0$ intersecting $C_i$.  Since $\phi_0$ may permute the $C_i$'s, the $T_i$'s are not necessarily distinct.  Let $V_i$ denote a solid torus in $S^3$ with $\partial V_i=T_i$.

Each $T_i$ is essential in $S^3-k$.  To see this, notice that an infinite cyclic cover of $S^3-k$ can be created by gluing copies of $S^3-k$ split along $S$.  The inverse image of $T_i$ in this cover is an infinite cylinder which is incompressible since $C_i$ is essential in $S$.

It follows that every $V_i$ contains $k$.  By connectivity of $S_0$ there can be only one such solid torus, thus $V=V_1=\dots=V_n$ and $T=T_1=\dots T_n$.  Consider a closed orbit $\gamma$ of $\phi_0$ that lies on the boundary of a regular neighborhood  $N(k)$ of $k$.  If this is a meridian, then $c(h)=0$.  Otherwise the orbits of $\phi_0$, together with $k$, define a Seifert fibre space structure on $V$.  

A Seifert fibre space structure on $V$ can have at most one singular fibre.  If $k$ is a singular fibre, the leaf space of $V-N(k)$ is an annulus.  Moreover, this annulus is covered by $S_0$.  It follows that $S_0$ is itself an annulus, thereby contradicting the choice of   $S_0$ as a reducing surface for $\phi$.

It follows that $k$ is a non-singular fibre and hence a $(p,q)$-cable of the core of $V$. Let $K_V$ denote the knot which is the core of $V$, and let $X_V$ denote the complement of $V$ in $S^3$.  Coordinates $(p,q)$ are chosen so that a $(1,0)$ curve is a longitude for $K_V$, and a $(0,1)$ curve is a meridian for $K_V$.

Let $S^{\prime\prime}$ denote the subsurface $S\setminus int S_0$ of $S$. Notice that the orientation on $S_0$ induces a $\phi_0$-invariant orientation on the intersection $T\cap S=C_1\cup\dots\cup C_n$.  Equivalently, the oriented complementary surface $S^{\prime\prime}$ intersects $\partial X_V$ in  $n\ge 1$ parallel consistently oriented simple closed curves: $C_1,\dots,C_n$. Necessarily these curves are $(1,0)$ curves and $S^{\prime\prime}$ has $n$ connected components. Moreover, the complement of $S^{\prime\prime}$ in $X_V$ is an $I$-bundle.  It follows that $K_V$ is fibered and $S^{\prime\prime}$ consists of $n$ copies of a fibre $S_V$ of $K_V$. It follows also that $p=n$.  Moreover, since $S_0$ cannot be an annulus, $|p|>1$.

To compute $c(h)$, consider a periodic orbit $\gamma$ near $C$, and express it in (longitude, meridian) coordinates with respect to $C$.  The orbits of $\phi_0$ define the Seifert fibration on $V$, and since $C$ is a non-singular fibre, $\gamma$ is also a $(p,q)$ curve with respect to $K_V$.  It follows that $\gamma$ intersects a meridian of $C$ once.  To compute the number of times $\gamma$ intersects $S$, it is enough to compute the intersection between a $(p,q)$ curve and the $p$ $(1,0)$ curves that are $S\cap T$.  It follows that $\gamma$ is a $(1,pq)$ curve with respect to $C$, thus $c(h)=1/(pq)$.
\end{proof}

\begin{cor}\label{pqgames} Let $S^3=(S,h)$ be an open book decomposition which has connected binding $k$. Suppose that the Thurston representative, $\phi$, of $h$ is either periodic or reducible with $\phi_0$ periodic. Then either $k$ is the unknot and $c(h)=0$ or $c(h)=1/n$, where  $|n|\ge 2$ is the slope of the cabling annulus.

\end{cor}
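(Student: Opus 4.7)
The plan is to split the argument along the two Thurston cases covered by the preceding two propositions, and to reduce the torus knot subcase of the periodic case to a computation parallel to the one carried out in the reducible case.

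First, suppose $\phi$ is periodic. The preceding (Seifert) proposition says $k$ is either the unknot or a $(p,q)$-torus knot with $|p|>1$. If $k$ is the unknot, then $S$ is a disk, and the unique mapping class in $\mathrm{Aut}(S,\partial S)$ with a periodic Thurston representative is the identity; the suspension flow on the complementary torus fibres it by meridians, so a closed orbit is a meridian and $c(h)=0$. If $k$ is a genuine $(p,q)$-torus knot, I repeat the end of the previous proposition's argument: the exterior of $k$ is Seifert fibered, with $k$ itself a regular fibre, and the closed orbits of $\phi$ on $\partial N(k)$ are parallel regular fibres. Writing such a fibre as a $(p,q)$-curve in coordinates coming from the unknotted companion solid torus and then changing to the $(\lambda,\mu)$-coordinates of $k$ itself, a closed orbit becomes a $(1,pq)$-curve, so $c(h)=1/(pq)$. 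Coprimality together with $|p|>1$ and $k$ being nontrivial forces $|q|\ge 2$, and therefore $|pq|\ge 2$.

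Second, if $\phi$ is reducible with $\phi_0$ periodic, the content of the immediately preceding proposition gives directly that $k$ is a $(p,q)$-cable knot with $|p|>1$ and $c(h)=1/(pq)$; since a cable is nontrivially cabled, $|q|\ge 1$, and combined with $|p|\ge 2$ this again yields $|pq|\ge 2$.

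Finally, I observe that in both of the nontrivial subcases the integer $n=pq$ is exactly the slope (in the standard $(\lambda,\mu)$ framing of $k$) of the cabling annulus, i.e.\ the annulus in $\partial N(\text{companion})$ cobounded by $k$ and its companion core. Setting $n=pq$ then yields the stated dichotomy: either $k$ is the unknot and $c(h)=0$, or $c(h)=1/n$ with $|n|\ge 2$ equal to the cabling slope. The only real step beyond quoting the two propositions is the torus-knot computation, and the only mild obstacle there is keeping the framing conventions straight when passing between the Seifert $(p,q)$-coordinates and the $(\lambda,\mu)$-coordinates on $\partial N(k)$; this is the same translation already performed in the proof of the reducible proposition.
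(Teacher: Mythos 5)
Your proof is correct and follows essentially the same approach as the paper's: reduce to the two preceding propositions, handle the unknot separately, and identify $n=pq$ as the slope of the cabling annulus. The paper's version is terser---it simply observes that either $k$ is the unknot or the cabling annulus is essential with slope $pq$, implicitly reusing the periodic-orbit/Seifert-fibre computation from the cable case and without separately arguing the disk-page case for the unknot---but your more explicit treatment of the torus-knot computation and the unknot (disk fibre, trivial mapping class group, $c(h)=0$) fills in exactly the steps the paper leaves to the reader.
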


\begin{proof}
Since $k$ is either a $(p,q)$-torus knot or a $(p,q)$-cable knot, it follows that either $k$ is the unknot, or the cabling annulus is incompressible and boundary incompressible in $S^3-int\,N(k)$. Since the cabling annulus has slope $pq$, it follows that either $k$ is the unknot  and $c(h)=0$ or $k$ is not the unknot and $c(h) = 1/(pq)$. 
\end{proof}

\begin{thm}\label{Gatwist}\cite{Ga3} Let $S^3=(S,h)$ be an open book decomposition which has connected binding $k$. Suppose that $\phi$  is either  pseudo-Anosov  or  reducible with $\phi_0$ pseudo-Anosov.  Then either $c(h)=0$ or $c(h)=1/r$, where $2\le |r|\le 4(genus(k))-2$.
\end{thm}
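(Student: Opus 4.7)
The plan is to split the claim into two assertions: $|p| = 1$, and $2 \le q \le 4g - 2$, where we write $c(h) = p/q$ in lowest terms with $q > 0$ (assuming $c(h) \ne 0$). The two parts have quite different natures: the denominator bound is a surface-intrinsic Euler--Poincar\'e computation for the pseudo-Anosov foliations, while $|p| = 1$ depends essentially on the global topology of $S^3$.

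First I would handle the denominator bound. The Thurston representative $\phi$ (or $\phi_0$ in the reducible case) preserves a pair of singular measured foliations whose boundary prongs on $C$ rotate with rotation number $p/q$, and the suspension-flow analysis of Section~\ref{basics} identifies $q$ with the number of prongs meeting $C$. The Euler--Poincar\'e identity applied to this singular foliation, using that interior singularities have at least three prongs and that $\chi(S) = 1 - 2g$, yields the bound $q \le 4g - 2$. The lower bound $q \ge 2$ comes from ruling out $q = 1$: that case forces $c(h) \in \mathbb{Z}$ and vanishing rotation number of $\phi|_C$, which either contradicts the pseudo-Anosov assumption at $C$ or reduces to the periodic/reducible-periodic setting already handled in Corollary~\ref{pqgames}.

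For the harder assertion $|p| = 1$, which is Gabai's theorem from \cite{Ga3}, I would follow his thin-position strategy. The stable lamination $\lambda^s$ of $\phi$ suspends via $\phi$ to an essential lamination $\Lambda^s$ in $S^3 \setminus \mathrm{int}\, N(k)$ whose cusps meet $\partial N(k)$ along a multicurve of slope $p\lambda + q\nu$. Place $k$ in thin position relative to a sweep-out of $S^3$ by level $2$-spheres and extract upper and lower disks $D_u, D_\ell$ via the standard thin-position argument. Their boundary slopes on $\partial N(k)$ are constrained by the combinatorial interaction of $D_u \cup D_\ell$ with the branched surface carrying $\Lambda^s$; a careful count of intersections of these disks with the cusp multicurve, combined with essentiality of $\Lambda^s$ and the fact that meridional filling of $S^3 \setminus \mathrm{int}\, N(k)$ yields $S^3$, forces $|p| \le 1$. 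Combining with $c(h) \ne 0$ (so $|p| \ge 1$) gives $|p| = 1$.

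The crux and main obstacle is the disk-versus-lamination combinatorial analysis in the thin-position step: this is where the global topology of $S^3$ genuinely enters. Proposition~\ref{lenspace} confirms that the bound fails in lens spaces, so no purely local-dynamical or index-theoretic replacement for Gabai's sweep-out technology is available.
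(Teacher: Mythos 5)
The paper's own proof is a direct citation: it invokes Gabai's Theorem~8.8 of \cite{Ga3}, which constrains the degeneracy $d(k)$ to be either meridional (giving $c(h)=0$) or an integer slope $r/1$ with $2 \le |r| \le 4\,\mathrm{genus}(k)-2$, and then simply translates via the coordinate change $c(h)=1/d(k)$ noted in Section~\ref{basics}. You instead attempt to reconstruct Gabai's theorem from scratch, which is a genuinely different and much more ambitious route. Your decomposition into a surface-intrinsic denominator bound and a global $|p|=1$ statement does capture the two principal ingredients: the Euler--Poincar\'e index formula does give $q \le 4g-2$ (the period of a boundary-prong orbit divides the prong count, which is at most $4g-2$), and the thin-position sketch for $|p|\le 1$ is consistent in outline with Gabai's sweep-out strategy.

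However, your argument for the lower bound $q \ge 2$ has a genuine gap. You claim that $q=1$ ``forces $c(h)\in\mathbb{Z}$ and vanishing rotation number of $\phi|_C$, which either contradicts the pseudo-Anosov assumption at $C$ or reduces to the periodic/reducible-periodic setting.'' Neither alternative holds: a pseudo-Anosov map can fix every boundary prong individually, giving rotation number $0$ and hence $q=1$ in lowest terms, while remaining pseudo-Anosov; it is neither periodic nor reducible. The constraint $|r|\ge 2$, i.e.\ $c(h)\ne\pm1$, is not a surface-intrinsic fact at all. It is a global statement about $S^3$ on the same footing as $|p|\le1$, and is part of the content of Gabai's Theorem~8.8 (the paper's parenthetical reference to \cite{Ro2} for ``a second proof of the lower bound'' underscores that it requires its own serious argument). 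In a reconstruction along your lines, ruling out $c(h)=\pm1$ must emerge from the same sweep-out/essential-lamination machinery that gives $|p|\le1$; it cannot be delegated to a local index count, nor to Corollary~\ref{pqgames}, which governs only the periodic and reducible-periodic cases.
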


\begin{proof} Theorem~8.8 of \cite{Ga3} states that the degeneracy, $d(k)$, of the complement of $k$ has one of the forms
\begin{enumerate}
\item $r(1/0)$, with $1\le r\le 4(genus(k)) - 2$
\item $r/1$, with $2\le |r| \le 4(genus(k))-2$
\end{enumerate}
(See \cite{Ro2} for a second proof of the lower bound in (2).)
As noted in Section~\ref{basics}, when $M=S^3$, $c(h)$ is the reciprocal of the slope of $d(k)$. It follows that  $c(h)$ is either $0$ or $1/r$ for some  integer $r$, $2\le |r| \le 4(genus(k))-2$. 
\end{proof}

Combining Corollary~\ref{pqgames} and Theorem~\ref{Gatwist} proves

\begin{thm}\label{onlyvalues} If $(S,h)$ is an open book decomposition of $S^3$ with connected binding, then $c(h)$ equals $0$ or $1/n$ for some integer $n$, $|n|\ge 2$.\qed
\end{thm}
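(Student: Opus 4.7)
The plan is to apply Thurston's classification (Theorem~\ref{Thurston}) to the monodromy $h$ and partition into the four possible types of Thurston representative $\phi$: pseudo-Anosov, periodic, reducible with $\phi_0$ pseudo-Anosov, or reducible with $\phi_0$ periodic. These four cases cover every $h$, so once each is handled, the conclusion follows immediately.

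First I would dispose of the pseudo-Anosov and reducible-with-$\phi_0$-pseudo-Anosov cases. These are exactly the hypotheses of Theorem~\ref{Gatwist}, which already gives that $c(h)$ equals $0$ or $1/r$ for some integer $r$ with $2 \le |r| \le 4\,\text{genus}(k)-2$. No further work is needed here beyond quoting the result.

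Next I would handle the periodic and reducible-with-$\phi_0$-periodic cases. These are exactly the hypotheses of Corollary~\ref{pqgames}, which gives that either $k$ is the unknot and $c(h)=0$, or $c(h) = 1/n$ for some integer $n$ with $|n| \ge 2$ (namely the slope of the cabling annulus). Again no additional argument is required.

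Finally I would observe that Thurston's classification exhausts all possibilities for $\phi$, so in every case $c(h) \in \{0\} \cup \{1/n : n \in \mathbb{Z},\ |n| \ge 2\}$. Since the theorem is essentially a bookkeeping combination of Corollary~\ref{pqgames} and Theorem~\ref{Gatwist}, there is no real obstacle to overcome; the substantive work lies in the earlier results. The only thing to be careful about is making sure the four Thurston cases really do exhaust all behaviors of the monodromy and that one has not double-counted or omitted a case boundary (e.g., the convention, stated just after Theorem~\ref{Thurston}, that reducible excludes periodic), which is purely a matter of citing the existing setup.
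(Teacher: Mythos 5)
Your proposal is correct and matches the paper's own proof exactly: the paper also derives Theorem~\ref{onlyvalues} by combining Corollary~\ref{pqgames} (periodic and reducible with $\phi_0$ periodic) with Theorem~\ref{Gatwist} (pseudo-Anosov and reducible with $\phi_0$ pseudo-Anosov), observing these exhaust the Thurston cases. Nothing further is needed.
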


Theorem~\ref{Gatwist} and Proposition~\ref{H_0'} immediately imply 

\begin{cor}\label{H'inS3} Let $H_0'$ be the monodromy of a (2,1)-cable of a knot in $S^3$ with pseudo-Anosov monodromy and fractional Dehn twist coefficient $0$. Then $H'=T_C \circ H'_0$, as constructed above, satisfies $c(H')=1/2$.\qed
\end{cor}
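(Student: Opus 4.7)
The plan is to combine the upper/lower bounds from Proposition~\ref{H_0'} with the rigid arithmetic restriction on twist coefficients for $S^3$ open books given by Theorem~\ref{onlyvalues}. Since these two inputs are already in hand, the proof will consist of observing that their intersection is a single point.

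First I would note that by construction, the binding of the open book $(S,H')$ is the knot $K$, the $(2,1)$-cable of $k$; in particular the binding is connected, so Theorem~\ref{onlyvalues} applies and forces
\[
c(H') \in \{0\} \cup \bigl\{ 1/n : n \in \mathbb{Z},\ |n|\ge 2\bigr\}.
\]
Next, I would invoke Proposition~\ref{H_0'}, which directly gives $1/2 \le c(H') \le 1$. Intersecting these two constraints, the only admissible value is $c(H')=1/2$: the value $1$ is excluded because it would correspond to $n=1$, which is not allowed, and every other $1/n$ with $|n|\ge 2$ lies strictly below $1/2$.

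There is essentially no obstacle here; the work has been done in Theorem~\ref{H} (ensuring the Thurston representative is pseudo-Anosov, so that the cable knot is genuinely a fibred knot in $S^3$ to which Theorem~\ref{Gatwist} and hence Theorem~\ref{onlyvalues} apply) and in Proposition~\ref{H_0'} (the explicit arc computation with $\alpha$ fixed by $r$). The only thing to verify carefully is that the hypotheses of Theorem~\ref{onlyvalues} are genuinely met by $(S,H')$, i.e.\ that we have an open book decomposition of $S^3$ with connected binding — but this is exactly the content of Lemma~\ref{H_0formula} together with the fact that $H' = T_C\circ H_0'$ differs from $H_0'$ only by a Dehn twist along a curve in the fibre, hence determines the same binding $K\subset S^3$.
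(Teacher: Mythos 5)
Your argument is correct and matches the paper's, which deduces the corollary from Theorem~\ref{Gatwist} (the pseudo-Anosov constituent of Theorem~\ref{onlyvalues} that you cite) together with the bound $1/2 \le c(H') \le 1$ from Proposition~\ref{H_0'}. One small aside: the cable knot is fibred simply because $(S,H_0')$ is an open book decomposition of $S^3$, not because the monodromy is pseudo-Anosov; Theorem~\ref{H} is only needed if one invokes Theorem~\ref{Gatwist} directly rather than the more general Theorem~\ref{onlyvalues}, so your reference to it as a prerequisite for applying Theorem~\ref{onlyvalues} is superfluous.
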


Among all the possible values of the fractional Dehn twist coefficient that arise in $S^3$, it is 1/2 that plays the most mysterious role knot theory. The following was first stated by Gabai in \cite{Ga3} along with a possible proof strategy.

\begin{conjecture}\label{main1} Let $S^3=(S,h)$ be an open book decomposition which is stabilized and has connected binding $K$. Then $c(h)\ne 1/2$.
\end{conjecture}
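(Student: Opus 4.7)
The plan is to argue by contradiction, combining the stabilization obstruction of Proposition~\ref{prop18} with a multi-component extension of Theorem~\ref{onlyvalues}. Suppose $(S,h)$ is a positive stabilization of some $(S',g)$ along a plumbing arc $a$, that $\partial S$ is a knot in $S^3$, and that $c(h)=1/2$. The first step is to pin down the topology of $S'$: attaching a 1-handle to an orientable surface changes the number of boundary components by exactly $\pm 1$ (a standard Euler-characteristic check gives $+1$ when the two attaching arcs lie on a single component of $\partial S'$ and $-1$ when they lie on different components), so the hypothesis that $\partial S$ be connected forces $\partial S'$ to have exactly two components $C_1, C_2$, joined by $a$. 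Hence $(S',g)$ is an open book of $S^3$ whose binding is a two-component fibered link $K' = C_1 \cup C_2$.

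Next, I would invoke Proposition~\ref{prop18}: from $c(h)=1/2$ one obtains $i_g(a) \ge 1$ and $i_g(a^{-1}) \ge 1$, hence $c(g\vert C_1) \ge 1$ and $c(g\vert C_2) \ge 1$. The desired contradiction would then come from an upper bound $|c(g\vert C_i)| < 1$ at (at least) one $C_i$, in the spirit of Theorem~\ref{onlyvalues} but for multi-component bindings. For connected bindings, Theorem~\ref{onlyvalues} rests on Corollary~\ref{pqgames} (periodic and reducible-with-periodic Thurston representative) together with Gabai's Theorem~\ref{Gatwist} (pseudo-Anosov and reducible-with-pseudo-Anosov $\phi_0$).

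To get such a bound at each binding component of a link, I would try to extend each of these ingredients. In the periodic or reducible-with-periodic case near $C_i$, the suspension flow should still produce an essential torus in $S^3\setminus N(K')$ exhibiting $C_i$ as a $(p,q)$-cable of the core of a solid torus, forcing $c(g\vert C_i) = 1/(pq)$ with $|pq|\ge 2$; one must check that the presence of the other link components does not obstruct the Seifert-fibering argument of Corollary~\ref{pqgames}. The harder task is to establish a link version of Theorem~\ref{Gatwist}: at each toroidal boundary component of $S^3\setminus N(K')$, the degeneracy slope should still be of the form $r(1/0)$ or $r/1$ with $|r|\ge 2$, giving $|c(g\vert C_i)| \le 1/2 < 1$ and completing the contradiction.

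The main obstacle is precisely this pseudo-Anosov, multi-component case; it is exactly why the statement remains a conjecture and is described by Gabai in \cite{Ga3} as a strategy rather than a theorem. Gabai's thin-position argument is carried out in the complement of a knot, and adapting the sutured-manifold machinery so that the slope bound is obtained simultaneously at each torus of a link complement appears to require substantive new input. A promising alternative would be Roberts' taut-foliation machinery \cite{Ro1,Ro2}, which is already phrased for multi-component link complements. A fallback would be to iterate destabilization: each stage splits off one more binding component while preserving $c\ge 1$ on every surviving component, and one hopes that at a terminal non-destabilizable open book the cabling or periodic constraints of Section~\ref{basics} force the contradiction.
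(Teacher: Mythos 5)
The statement you were asked to address is a \emph{conjecture} in the paper, not a theorem; the authors present it explicitly as an open problem first raised by Gabai together with a proposed but unrealized proof strategy, and the rest of Section~\ref{applications} treats its truth as a hypothesis from which counterexamples to the HKM conjecture would follow. There is therefore no proof in the paper against which to compare your attempt. What can be assessed is the internal soundness of your plan, and most of it holds up.

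Your opening reduction is correct: since the plumbing arc $a$ is properly embedded in $S'$, orientability of $S$ forces the band to run between two distinct boundary circles of $S'$, so connectedness of $\partial S$ forces $\partial S'$ to have exactly two components. The appeal to Proposition~\ref{prop18} is then the right move and does give $c(g)\ge 1$ at each of $C_1$ and $C_2$. You also diagnose the obstruction correctly: a contradiction would follow from a multi-component analogue of Theorem~\ref{onlyvalues} (or at least of Theorem~\ref{Gatwist}) bounding $c(g)$ strictly below $1$ at some $C_i$. The periodic and reducible-with-periodic half of this (the analogue of Corollary~\ref{pqgames}) is plausible, since the essential-torus and Seifert-fibering arguments are local to a neighborhood of a single boundary circle. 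The pseudo-Anosov half is exactly the missing ingredient: Gabai's degeneracy-slope bound (Theorem~\ref{Gatwist}, which underlies Theorem~\ref{onlyvalues}) is proved by thin-position arguments in knot complements, and no link-complement version is available. In recognizing this, you have essentially re-derived why the statement remains a conjecture.

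Two small cautions. First, your fallback of iterated destabilization does not automatically split off one additional binding component at each stage: a further destabilization of $(S',g)$ along an arc with both endpoints on one component of $\partial S''$ \emph{decreases} the component count, so the picture of a monotone chain is not guaranteed. Second, the tightness route (applying the HKM criterion ``$c\ge 1$ on all components implies $\xi$ tight'' to $(S',g)$) does not by itself yield a contradiction: $c(h)=1/2$ does not force the compatible contact structure on $(S,h)$ to be overtwisted, and the overtwistedness in the paper's examples is a feature of the specific constructions in Section~\ref{applications}, not a consequence of $c(h)=1/2$ alone.
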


By Theorem~\ref{onlyvalues}, this conjecture can also be stated as follows.

\begin{conjecture} \label{main2} Let $S^3=(F,h)$ be an open book decomposition which is stabilized and has connected binding. Then either $c(h)=0$ or $c(h)=1/n$, where $n\ge 3$.
\end{conjecture}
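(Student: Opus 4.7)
The conjecture reduces to excluding the single value $c(h)=1/2$ exactly: Theorem~\ref{stabilizedc(h)} forces $c(h)\in[0,1/2]$ for any positive stabilization with connected binding, while Theorem~\ref{onlyvalues} restricts the values of $c(h)$ in $S^3$ to $\{0\}\cup\{1/n:|n|\ge 2\}$, and the intersection of these two sets is $\{0,1/2\}$. So I would argue by contradiction, assuming $(S,h)$ is the positive stabilization of $(S',g)$ along a plumbing arc $a$ with $c(h)=1/2$.

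The first move is Proposition~\ref{prop18}, which immediately upgrades this to $c(g)\ge 1$ on every component of $\partial S'$. Since destabilization preserves the ambient manifold, $(S',g)$ is itself an open book for $S^3$, now with a fibered binding link $K'\subset S^3$ that may have more than one component. The plan is to contradict the inequality $c(g)\ge 1$ by establishing the following multi-component extension of Theorem~\ref{Gatwist}: for any fibered link $L\subset S^3$ with fibration $(F,\phi)$ and any component $C$ of $\partial F$, one has $|c_C(\phi)|\le 1/2$. Applied to $(S',g)$ on the component where Proposition~\ref{prop18} gives $c(g)\ge 1$, this is the required contradiction.

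I would prove the link extension case by case on the Thurston type of $\phi$ adjacent to $C$, following the template of Section~\ref{S3iS}. In the periodic and reducible-periodic cases, the orbits of $\phi_0$ sweep out an essential torus in the exterior of the relevant component of $L$; as in the proofs leading to Corollary~\ref{pqgames}, the existence of this torus forces that component of $L$ to be a $(p,q)$-cable of a core knot with $|p|>1$, after which the direct orbit computation yields $|c_C(\phi)|=1/|pq|\le 1/2$. The novelty compared with Corollary~\ref{pqgames} is only that one must localize the argument near the component $C$ and verify that other components of $L$ do not interfere with the Seifert-fibered structure on the relevant solid torus. In the pseudo-Anosov case I would follow Gabai's thin-position/sutured-manifold strategy from \cite{Ga3}, built on the branched surface carrying the suspension of the invariant laminations of $\phi$, and bound the degeneracy slope on each torus component of $\partial(S^3\setminus N(L))$ separately by $1/2$, as Gabai does for knots.

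The main obstacle is this last, pseudo-Anosov step in the multi-component setting. Gabai's thin-position argument was written for knots, and adapting it to a fibered link requires tracking how a thin-position 2-sphere meets several distinct binding tori simultaneously while still ruling out degeneracy slopes of modulus greater than $1/2$ on any single component. Equivalently, one must carry out Gabai's combinatorial analysis of disks of intersection with the branched surface component-by-component, in a fashion compatible with all other components of $L$. This is the step the authors explicitly flag when they write that the result \emph{appears to depend on deep facts about $S^3$, most likely Gabai's thin position strategy, in an essential way}, and it is where I expect essentially all of the work to lie.
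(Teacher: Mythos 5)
The statement you are attempting to prove is labeled a \emph{conjecture} in the paper, and the authors do not prove it: they only establish (via Theorem~\ref{onlyvalues}) that it is equivalent to Conjecture~\ref{main1}, and they explicitly note that even the pseudo-Anosov knot case was ``first stated, without proof, by Gabai.'' So there is no ``paper's own proof'' to compare against, and what you have written is a strategy sketch, not a proof.

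Beyond the gap you acknowledge, there is a concrete error that blocks the strategy as stated. Your plan hinges on a ``multi-component extension of Theorem~\ref{Gatwist}'' asserting that for any fibered link $L\subset S^3$ with fiber $F$ and monodromy $\phi$, one has $|c_C(\phi)|\le 1/2$ on every boundary component $C$. That claim is false. The positive Hopf band $(A,T)$, with $A$ an annulus and $T$ a single positive Dehn twist, is a fibered link in $S^3$ (the Hopf link), and by the paper's own treatment of the annulus case in the proof of Proposition~\ref{rightveeringi}, $c(T^n)=n$, so $c(T)=1$ on each boundary component, not $\le 1/2$. More generally, the periodic and reducible-periodic analysis leading to Corollary~\ref{pqgames} relies on $S_0$ not being an annulus and on connectedness of the binding; neither assumption is available once $\partial S'$ may be disconnected. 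So the analogue of Corollary~\ref{pqgames} that you invoke in the periodic case already fails, and you would at minimum have to replace the uniform bound $1/2$ with a much more delicate statement about which boundary components of a fibered link can have $c\ge 1$.

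Even granting some repaired periodic-case lemma, you correctly identify that the pseudo-Anosov multi-component extension of Gabai's degeneracy-slope theorem is the essential content, and you do not supply it. That is precisely why the statement remains a conjecture in the paper. As a technical aside, you should also double-check the scope of the clause ``$c(g)\ge 1$ on each boundary component of $S'$'' in Proposition~\ref{prop18}: the estimates $i_g(a)\ge 1$ and $i_g(a^{-1})\ge 1$ only directly constrain the boundary component(s) meeting the plumbing arc $a$, so the strength of the conclusion you import from that proposition may be narrower than your argument uses.
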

%

 An appealing aspect of these conjectures is that very simple examples show that they are false outside of $S^3$. Specifically, Example~\ref{lensexample} in Section~\ref{basics} proves

\begin{prop}\label{lenspace} Let $p\ge 5$ and $q = 1$. Then $L(p,q)$ has an open book decomposition with connected binding which is stabilized and satisfies $c(h)=1/2$.
\qed\end{prop}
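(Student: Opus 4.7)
The plan is to invoke Example~\ref{lensexample} essentially directly: that example constructs, for each $n\ge 5$, a positively stabilized open book $(S,h)$ on a once-punctured torus whose monodromy is a product of positive Dehn twists, and it verifies both that $h$ is pseudo-Anosov and that $c(h)=1/2$. Every property demanded by Proposition~\ref{lenspace} other than the identification of the supporting 3-manifold is therefore already at hand; the only real content left to verify is that the ambient manifold is $L(p,1)$ for $p=n$.

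First I would check the binding is connected. In Example~\ref{lensexample}, the plumbing arc $\sigma$ is an arc of the annulus $A$ that cuts $A$ into a disk, which forces its two endpoints to lie on distinct boundary components of $A$. Consequently the band attached in Definition~\ref{stabilize} merges the two boundary circles of $A$ into a single boundary circle of the stabilized page $S$, so $(S,h)$ has connected binding as required. Next, since positive stabilization does not change the underlying 3-manifold, the manifold supported by $(S,h)$ equals the manifold supported by the pre-stabilization open book $(A,T^n)$, where $T$ is a positive Dehn twist about the core of the annulus.

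It remains to identify $(A,T^n)$ with $L(n,1)$. This is a standard genus-one Heegaard splitting computation: the mapping torus of $T^n$ on $A$ is a thickened torus, and the open book is obtained by capping off the two boundary tori with solid tori so that each boundary circle of $A$ bounds a meridional disk in the corresponding solid torus. Tracking the image of a meridian of one glued solid torus on the boundary of the other — the image picks up $n$ twists coming from $T^n$ — yields the genus-one splitting of $L(n,1)$ in its standard presentation. Setting $n=p\ge 5$ then produces, together with Example~\ref{lensexample}, a stabilized connected-binding open book of $L(p,1)$ with $c(h)=1/2$.

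The main obstacle is simply bookkeeping of orientation and framing conventions in the last paragraph, to be sure the Heegaard gluing yields exactly $L(p,1)$ rather than a relative such as $L(p,p-1)$ or its mirror; once the conventions are pinned down, the proposition reduces to Example~\ref{lensexample} as the discussion preceding the statement already anticipates.
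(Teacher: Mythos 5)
Your proposal is correct and follows the same route as the paper: the paper simply cites Example~\ref{lensexample} as the proof (hence the \qed in the statement), and you are right that the only extra content is identifying the ambient manifold supported by $(A,T^n)$ as $L(n,1)$, which the paper leaves implicit. Your remarks about connected binding and about stabilization not changing the supporting manifold are the correct justifications, and the worry about landing in $L(p,p-1)$ rather than $L(p,1)$ is harmless since these are diffeomorphic (as unoriented manifolds, $L(p,p-1)\cong L(p,-1)\cong L(p,1)$), so the identification is robust to framing conventions.
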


\section{Applications}\label{applications}

In this section, we give an application of our results in contact topology by associating to each pair $(S,h)$ a contact structure $\xi$ via the Giroux correspondence \cite{Gi}.  This correspondence is many to one, and Giroux shows that the failure of injectivity is generated by positive stabilization (see Definition~\ref{stabilize}).  A natural goal is to try to determine properties of $\xi$ from a single pair $(S,h)$.

Some results in this direction are as follows \cite{HKM1}. If $c(h) \le 0$ then $\xi$ is overtwisted.  If $c(h) \ge 1$ then $\xi$ is tight.  Contrast this with the uncheckable theorem that $\xi$ is tight if and only if for {\it every} $(S,h)$ determining $\xi$, $c(h) >0$.  This leads to the conjecture by Honda, Kazez, and Mati\'c \cite{HKM1} that if a single representative is not destabilizable and has $c(h) >0$ (or equivalently is right-veering) then $\xi$ is tight.

Lekili~\cite{Lekili} produced counterexamples to this conjecture with $S$ a 4 times punctured sphere.  Lisca~\cite{Lisca} constructed an infinite number of counterexamples for the same surface.   Ito and Kawamuro~\cite{IK} have produced an even larger set of counterexamples on the 4 times punctured sphere. 

This should be contrasted with the work of Colin and Honda \cite{CoHo} in which they show that if an open book has connected boundary, pseudo-Anosov monodromy, and fractional Dehn twist coefficient $k/n >0$, then $k>1$ implies the associated contact structure is not only tight, but  universally tight, and the universal cover is $\mathbb R^3$.

Our examples show that it is often very easy to recognize an overtwisted contact structure using essentially no technology. It is enough, by Proposition~\ref{OT}, to find a non-separating, untwisted, unknotted curve contained in $S$.  

Our construction involves surgery on an unknotted curve in $S$, so the first step is to record some results on different framings of the curve before and after surgery.

Let $M= (S,h)$ be an open book decomposition and let $C$ be a simple closed curve embedded in the interior of $S$ which bounds an embedded disk in $M$.  Let $N$ be a regular neighborhood of $C$. Note that although there is a unique canonical meridian $\mu$, there are two natural choices of longitude in this setting:
\begin{enumerate}
\item $\lambda_D$, the slope of the single curve $D\cap\partial N$
\item $\lambda_S$, the slope shared by the two curves $S\cap\partial N$.
\end{enumerate}
Using the orientation convention that $\langle \lambda_S, \mu\rangle = \langle \lambda_D, \mu \rangle = 1$ with respect to the outward pointing normal on $\partial N$, define the integer $tw_C(S)$, the {\it twisting of S along C}, by writing $\lambda_S=\lambda_D+tw_C(S)\mu$.

If the orientation for $\lambda_D$ is reversed, then $\mu$ and hence $\lambda_S$ also have their orientations reversed.  It follows that the sign of $tw_C(S)$ does not depend on the choice of orientation of $D$.  In particular, from the point of view of $D$, if the sign of $tw_C(S)$ is negative, an annular neighborhood of $C$ in $S$ twists to the left, when traveling around $\partial D$.  Thus $tw_C(S)$ serves as a topological stand-in for the Thurston-Bennequin invariant.

Perform a $p\mu+q\lambda_D$ surgery along $C$, for some relatively prime integers $p$ and $q$, and let $Y$ denote the manifold thus obtained from $M$.

\begin{prop}\label{surgery} The manifold $Y$ is homeomorphic to $M$ if and only if $p/q=\pm 1/q$.
\end{prop}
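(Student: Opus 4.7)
The plan is to identify $Y$ with a connected sum $M \# L(p, q)$ and then appeal to uniqueness of prime decomposition. First I would exploit that $C$ is genuinely unknotted: since $C$ bounds the disk $D$ in $M$, a regular neighborhood $B$ of $D$ is a 3-ball whose interior contains $C$ as a standardly embedded unknot, with meridian disk (isotopic to) $D$, and the framing $\lambda_D$ on $\partial N(C)$ is exactly the framing coming from $D \subset B$. The surgery is performed inside $N(C) \subset \mathrm{int}\, B$, so it only affects $B$ and I may write
$$
Y = (M \setminus \mathrm{int}\, B) \cup_{\partial B} B^{p,q},
$$
where $B^{p,q}$ denotes the 3-manifold obtained from $B$ by the prescribed $p\mu + q\lambda_D$ Dehn filling of $N(C)$.

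Next I would identify $B^{p,q}$ with a punctured lens space. Realizing $B$ as the complement of an open 3-ball in $S^3$ turns $(S^3, C)$ into the standard unknot, and the slopes $\mu, \lambda_D$ defined on $\partial N(C) \subset M$ coincide with those coming from the embedding into $S^3$. The classical computation that $p/q$-surgery on the unknot in $S^3$ produces the lens space $L(p, q)$ therefore gives that $B^{p,q}$ is $L(p, q)$ with an open ball removed. Re-gluing this punctured lens space to $M \setminus \mathrm{int}\, B$ along the sphere $\partial B$ is by definition forming a connected sum, so
$$
Y \cong M \# L(p, q).
$$

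Both directions of the proposition then fall out quickly. If $p = \pm 1$, then $L(p, q) \cong S^{3}$ and $Y \cong M \# S^{3} \cong M$. Conversely, suppose $Y \cong M$, so $M \# L(p, q) \cong M$. Since $M$ is closed and orientable (it is the ambient manifold of an open book), the Kneser--Milnor uniqueness theorem for prime decompositions of closed orientable 3-manifolds forces the summand $L(p, q)$ to be $S^{3}$, and hence $p = \pm 1$, which is exactly the condition $p/q = \pm 1/q$.

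The only step that needs genuine care is verifying that the disk framing $\lambda_D$ defined intrinsically on $\partial N(C) \subset M$ really does coincide with the framing inherited from the embedding $B \hookrightarrow S^{3}$; this is precisely where the hypothesis that $C$ bounds an honest disk in $M$ (rather than being merely null-homologous) is used. Once this matching of framings is in hand, the rest of the argument reduces to standard surgery and prime-decomposition facts.
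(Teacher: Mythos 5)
Your proof is correct and takes essentially the same approach as the paper: both localize the surgery to a ball neighborhood $B$ of $D$, recognize $Y$ as $M \# L(p,q)$ (the paper phrases this as ``$M = M \# S^3$ with summing sphere $\partial B$, so it suffices to take $M = S^3$''), and conclude $p = \pm 1$. The only difference is in the last step: the paper handles the forward direction by a direct cut-and-twist along $D$ and the converse by computing $\pi_1(Y) = \mathbb{Z}_p$ in the reduced $S^3$ case, while you invoke the lens-space surgery description together with Kneser--Milnor uniqueness of prime decompositions, which in fact makes the paper's reduction step rigorous.
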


\begin{proof} Let $B$ denote a regular neighborhood of $D$ in $M$. Note that $M=M\sharp S^3$ with summing sphere $\partial B$. So it suffices to consider the case that $M=S^3$. If $p=\pm 1$, simply cut and twist around $D$ to realize the homeomorphism between $M$ and $Y$. Computing $\pi_1(Y)=<\mu | p\mu> = \mathbb Z_p$ shows the converse is also true.
\end{proof}

\begin{prop}\label{tauqofh} If $tw_C(S)=0$ and $p= - 1$, then $Y$ has an open book decomposition given by $(S, T_C^q h)$.  Moreover, the analogues of $C, D,$ and $S$ exist in $Y$ as embedded objects, and in $Y$ it remains the case that $tw_C(S)=0$. 
\end{prop}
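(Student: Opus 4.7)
My plan is to realize the $-1/q$-surgery as an explicit Rolfsen twist supported near the disk $D$ and to push the open book structure through this homeomorphism. First, the hypothesis $tw_C(S)=0$ gives $\lambda_D=\lambda_S$, so the surgery slope $-\mu+q\lambda_D$ coincides with $-\mu+q\lambda_S$; in other words, the surgery is $-1/q$ with respect to the page framing of $C$, which is the framing under which the Dehn twist relation is cleanest.

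To build the homeomorphism $\Phi:M\to Y$, set $X=M\setminus\mathrm{int}(N)$. The punctured disk $D\cap X$ is properly embedded in $X$ and meets $\partial N$ in a curve of slope $\lambda_D$. A $q$-fold twist supported in a bicollar of $D\cap X$ is a self-homeomorphism of $X$ whose restriction to $\partial N$ is a $q$-fold Dehn twist along $\lambda_D$; this Dehn twist carries the new meridian $-\mu+q\lambda_D$ to the original meridian $\mu$ up to sign, so the twist extends across the replaced solid torus to give $\Phi:M\to Y$. Because $tw_C(S)=0$ aligns $\lambda_S$ with $\lambda_D$, the bicollar can be chosen so that, near $C$, its product direction is transverse to $S$; then $\Phi$ is the identity off this bicollar and acts on $S$ near $C$ as $q$ positive Dehn twists along $C$. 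Pushing the first-return map of the open book once around therefore produces the monodromy $T_C^q\circ h$, and $(S,T_C^q h)$ is an open book for $Y$.

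For the final assertions, the images $\Phi(C), \Phi(D), \Phi(S)$ are embedded in $Y$, and we identify them with $C$, $D$, $S$ via $\Phi$. Since $tw_C(S)$ is a topological invariant of the embedded pair $(C,S)\subset M$ and $\Phi$ is a homeomorphism, the equality $tw_C(S)=0$ transfers to $Y$.

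The main obstacle I expect is the sign verification: confirming that the Rolfsen twist contributes positive Dehn twists $T_C^q$ rather than $T_C^{-q}$ to the monodromy under the paper's orientation conventions. This reduces to a careful accounting of the orientations of $\mu$, $\lambda_D$, and the bicollar direction through the twist construction, which is bookkeeping but easy to slip on.
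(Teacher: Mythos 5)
Your plan --- realize the $\pm 1/q$ surgery as a Rolfsen twist supported near $D$ and push the open book through it --- is a natural classical idea, but the key step, ``Pushing the first-return map of the open book once around therefore produces the monodromy $T_C^q\circ h$,'' is where the argument has a genuine gap, and taken literally the conclusion you'd draw from $\Phi$ is the wrong one.

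The issue is this. You construct $\Phi:M\to Y$ as a homeomorphism supported in a bicollar of $D$, and you argue that $\Phi(S)=S$ and that $\Phi|_S$ is $T_C^q$ near $C$. But a homeomorphism $\Phi$ carries the open book $(S,h)$ on $M$ to the open book on $Y$ with page $\Phi(S)$ and monodromy $\Phi|_S\circ h\circ (\Phi|_S)^{-1}$. If $\Phi|_S=T_C^q$ (up to isotopy), this gives monodromy $T_C^q\,h\,T_C^{-q}$ --- conjugate to $h$ --- not $T_C^q h$. That is exactly as it must be: the homeomorphism $\Phi$ by itself can only tell you that $Y$ is abstractly homeomorphic to $M$, which is Proposition~\ref{surgery}, not Proposition~\ref{tauqofh}. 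What you actually need is to identify the \emph{natural} open book on $Y$ that arises from extending the fibration of $M\setminus K$ across the new solid torus after the $-\mu+q\lambda_D$ filling, and to compute \emph{that} monodromy. The Rolfsen twist is the wrong lens for this, because it is supported near $D$, which is transverse to the pages and passes through all of them, so $\Phi$ does not carry fibres to fibres and does not commute with the first-return flow. Intuiting the monodromy change from the local picture near one page therefore doesn't suffice. A subsidiary problem, which matters for the ``moreover'' part, is that you tacitly assume $D\cap S=C$; the statement only provides an embedded $D$ with $\partial D=C$, and interior intersections $D\cap S$ would undercut your identification $\Phi(S)=S$.

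The paper sidesteps all of this by working directly in the mapping-torus description: it removes a solid torus $N=A\times[1/2,1]$ around $C$ sitting between two consecutive pages and constructs a homeomorphism from $(S\times I)\setminus N$ (glued by $h$) to $(S\times I)\setminus N$ (glued by $T_C^q h$) that is the identity away from $A\times\{1\}$, then reads off from a compressing disk for $N$ that the filling slope is $\mu-q\lambda_D$, i.e.\ $p=-1$. This compares the two open books on their common complement rather than pushing one through a homeomorphism, and it is precisely the point where the change in monodromy and the change in filling slope are matched. To salvage the Rolfsen-twist approach you would essentially need to reproduce this comparison; the twist alone does not deliver it.
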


\begin{proof} Let $A$ be an annular neighborhood of $C$ in $S$ and let $N=A\times[1/2,1]$. Let $b$ be non-separating embedded arc in $A$  and let $R = b \times [1/2,1]$ be a rectangular compressing disk for $N$.  

To construct the desired homeomorphism from $Y$ to the manifold built from $(S, T_C^q h)$, start with the identity map from $(S \times I)-N \to (S\times I)-N$.  Next extend the homeomorphism to $A \times \{1\} \to A\times\{1\}$ as follows.  Let $x,y\in A$.  At the quotient level, we have $(x,1)\sim (h(x),0)$ and $(y,1)\sim(T_C^q h(y),0)$, thus we define $(x,1)\mapsto (y,1)$ if $h(x)=T_C^q h(y)$.

\begin{figure}[ht]
  	\centering
   	\includegraphics[width=3.5truein]{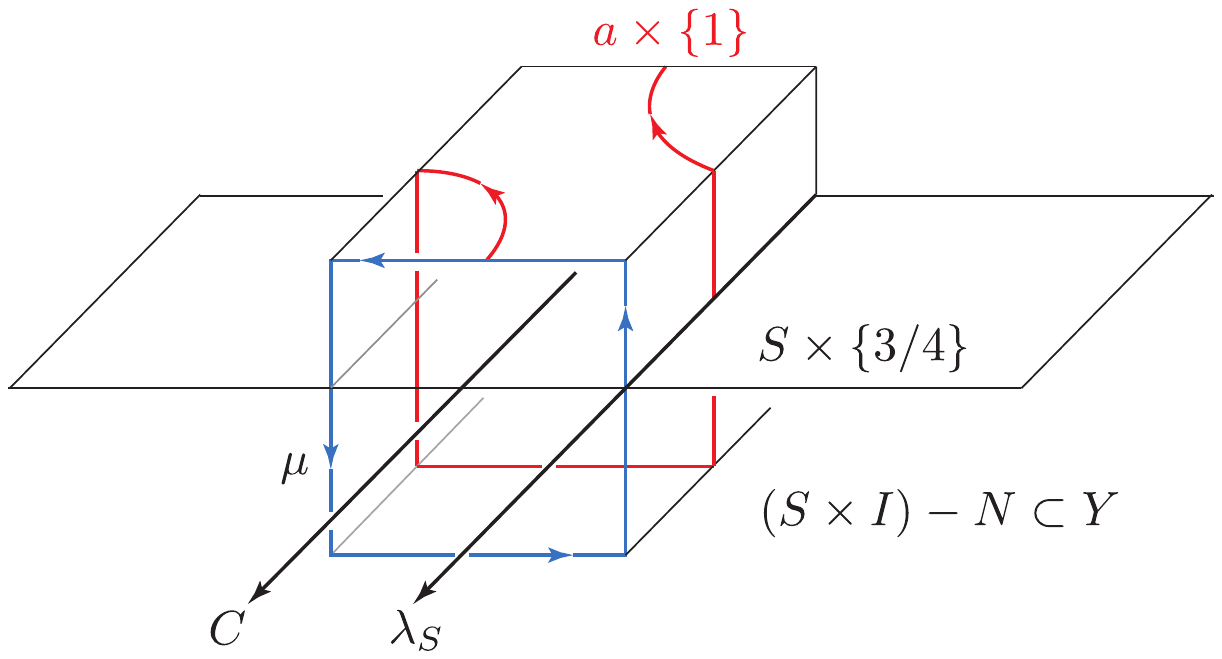} 
	\caption{}
	\label{surgery}
\end{figure}

Next we compute the preimage of $\partial R$ to compute the required filling of $\partial N$ in the domain.  To be explicit about orientations, pick an orientation for $C$.  Choose the orientation on $\lambda_S$ so that it is isotopic to $C$ in $N(C)$.  The orientation on $\lambda_S$ determines the orientation on $\mu$.  See Figure~\ref{surgery}. The only portion of $\partial R$ not mapped by the identity is the arc $b \times \{1\}$.  From the formula above, we see the preimage of $b \times \{1\}$ is an arc $a\times\{1\}$ in $A\times \{1\}$ such that $h(a)=T_C^q h(b)$, or equivalently, $a=h^{-1}T_C^q h(b)$.  It follows that the preimage of $\partial R$ in $M$ is a $\mu - q\lambda_S=\mu -q\lambda_D$ curve, thus the homeomorphism on $(S \times I)-N$ extends to a homeomorphism defined on all of $Y$.
\end{proof}

\begin{prop}\label{OT} If $C$ bounds an embedded disk $D$ in $M=(S,h)$, $tw_C(S)\ge 0$, and $C$ is non-separating in $S$, then the contact structure determined by $(S,h)$ is overtwisted.
\end{prop}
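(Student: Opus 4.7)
The plan is to realize $C$ as a Legendrian unknot with non-negative Thurston--Bennequin invariant, and then invoke the Bennequin--Eliashberg inequality to conclude that $\xi$ cannot be tight. Every hypothesis in the proposition is exactly what is needed to arrange this.

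First, I would use the Giroux--Thurston--Winkelnkemper construction to pick a contact form supporting $(S,h)$. After a $C^\infty$-small perturbation a chosen page may be taken to be convex with dividing set isotopic to the binding, and with the property that for any simple closed curve contained in the page the \emph{page framing agrees with the contact framing}. The Legendrian realization principle then applies to $C$: since $C$ is non-separating in $S$, no component of $S\setminus C$ is a disk whose closure misses $\partial S$, so $C$ can be isotoped through convex surfaces to a Legendrian curve $C'$ still lying in a page, with contact framing equal to the page framing.

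Next I would compute the Thurston--Bennequin invariant of $C'$ relative to the embedded disk $D$. A Legendrian pushoff of $C'$ is realized by a copy of $\lambda_S$ in $\partial N$, so $tb(C') = \mathrm{lk}(C',\lambda_S)$, which equals the algebraic intersection of $\lambda_S$ with $D$. Because $D\cap \partial N = \lambda_D$ and $\lambda_S = \lambda_D + tw_C(S)\,\mu$, while the meridian $\mu$ bounds a meridional disk of $N$ meeting $D$ algebraically once, this intersection number is precisely $tw_C(S)$. Thus $tb(C') = tw_C(S)\ge 0$.

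Finally, I would apply the Bennequin--Eliashberg inequality: in any tight contact three-manifold a Legendrian unknot $L$ bounding an embedded disk satisfies $tb(L)\le -1$. Since $C'$ bounds the embedded disk $D$ and has $tb(C') \ge 0$, $\xi$ cannot be tight and so is overtwisted. The main obstacle is verifying that convexity of the page, the standard form of its dividing set, and the page-framing-equals-contact-framing identification can all be arranged simultaneously so that the Legendrian realization step goes through cleanly. If one prefers a self-contained and more elementary argument in line with the remark in the introduction, one can avoid $tb$ entirely by exhibiting an honest overtwisted disk: stabilize $C'$ Legendrian-ly until $tb=0$ (possible when $tw_C(S) \ge 0$), and then perturb $D$ in a collar of $\partial D$ to make the contact planes tangent to $D$ along $\partial D$, producing the overtwisted disk directly.
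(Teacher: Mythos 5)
Your argument is correct and follows essentially the same route as the paper: you observe that a non-separating curve is non-isolating, invoke Legendrian realization to put $C$ on a page with contact framing equal to page framing, and identify $tb(C)$ with $tw_C(S)$. The only cosmetic difference is that your main finish invokes the Bennequin--Eliashberg inequality to rule out tightness, whereas the paper simply stabilizes $C$ (if necessary) to $tb=0$ and exhibits $D$ directly as an overtwisted disk --- which is exactly the ``more elementary'' alternative you describe in your closing sentence, and is in fact essentially what the proof of the Bennequin--Eliashberg bound for unknots reduces to.
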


\begin{proof} Since $C$ is non-separating, it is non-isolating.  Thus by Honda's Legendrian realization principle \cite{Ho}, we may assume $C$ is Legendrian.  The Thurston-Bennequin invariant of $C=\partial D$ is equal to $tw_C(S)$, thus if $tw_C(S)=0$, $D$ is an overtwisted disk.  In the remaining cases, $C$ can be isotoped to decrease the Thurston-Bennequin invariant and thereby produce an overtwisted disk.
\end{proof}

\begin{cor}\label{qsurgery} If $C$ bounds an embedded disk $D$ in $M=(S,h)$, $tw_C(S)= 0$, and $C$ is non-separating in $S$, then the contact structure determined by $(S,T_C^q h)$ is overtwisted for any integer $q$.
\end{cor}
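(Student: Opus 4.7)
The plan is a direct two-step combination of the two preceding results. By hypothesis the curve $C \subset S$ bounds the embedded disk $D$ in $M=(S,h)$, has $tw_C(S)=0$, and is non-separating in $S$, so we are exactly in the setup of Proposition~\ref{tauqofh}. Applying that proposition (with $p=-1$ and the given $q$), the $-1/q$ surgery on $C$ produces a manifold $Y$ that carries the open book decomposition $(S, T_C^q h)$, and furthermore the curve $C$, the disk $D$, and the page $S$ persist in $Y$ as embedded objects with $tw_C(S)=0$ still holding in $Y$.

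The next step is to observe that the property of $C$ being non-separating in $S$ is an intrinsic property of the pair $(C,S)$, not of the ambient manifold; since the page $S$ is unchanged, $C$ remains non-separating in the page of the new open book $(S, T_C^q h)$.

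At this point the hypotheses of Proposition~\ref{OT} are satisfied for the open book $(S, T_C^q h)$ on $Y$: the curve $C$ bounds an embedded disk in $Y$, has $tw_C(S)=0$, and is non-separating in the page $S$. Proposition~\ref{OT} therefore implies that the contact structure compatible with $(S, T_C^q h)$ is overtwisted, which is exactly the conclusion of the corollary. There is no real obstacle here; all the work has already been done in Propositions~\ref{tauqofh} and~\ref{OT}, and the corollary is just the combination.
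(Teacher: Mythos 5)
Your proposal is correct and follows essentially the same route as the paper: combine Proposition~\ref{tauqofh} (to get the open book $(S,T_C^q h)$ on $Y$ with the embedded disk still present and $tw_C(S)=0$ preserved) with Proposition~\ref{OT} (to conclude overtwistedness), noting along the way that non-separation of $C$ in the page is intrinsic. The only difference worth flagging is cosmetic: where you simply cite the assertion ``in $Y$ it remains the case that $tw_C(S)=0$'' from the statement of Proposition~\ref{tauqofh}, the paper's proof instead re-derives that fact by an explicit framing computation ($\lambda_S = \lambda_D + tw_C(S)\mu$ in $M$ transforms under $\mu' = \mu - q\lambda_D$, and when $tw_C(S)=0$ one gets $\lambda_S=\lambda_D$ on $\partial N(C)$, a local relation that survives the surgery). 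This is because the proof given in the paper for Proposition~\ref{tauqofh} constructs the homeomorphism and identifies the surgery curve but does not actually verify the ``moreover'' clause about $tw_C(S)$, so the paper uses the corollary's proof to supply that verification. Your version is logically complete as written, provided one accepts the full statement of Proposition~\ref{tauqofh}; a referee might ask you to justify the persistence of $tw_C(S)=0$ directly, which is exactly the small computation the paper includes.
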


\begin{proof} The invariant $tw_C(S)$ has two possible interpretations depending on whether it is computed in $M=(S,h)$ or in $Y=(S,T_C^q h)$.

In $M$ we have $\lambda_S=\lambda_D + tw_C(S)\mu$. By Proposition~\ref{tauqofh} we have in $Y$, $\mu'=\mu - q\lambda_D$ and consequently, $\lambda_S=\lambda_D + tw_C(S)(\mu'+q\lambda_D)=(1 + tw_C(S)q)\lambda_D + (\mu' tw_C(S))$.  When $tw_C(S)=0$ in $M$, $\lambda_S=\lambda_D$.  This formula is satisfied on just the boundary of the regular neighborhood of $C$, thus it holds in $Y$ as well.
\end{proof}

\begin{example}\label{cable_8_20}  Let $F$ be the Seifert surface for the knot $8_{20}$ as described in Example~\ref{Example_8_20}.  Using the notation of Section~\ref{pA}, form the $(2,1)$-cable of $F$, and denote its Seifert surface by $S = F_0\cup P\cup F_{1/2}$.  This gives an open book decomposition $(S, H_0')$ of $S^3$. To produce the curve $C$ used in the definition of $H = T_C \circ H_0$ consider the curves $A$ and $B$ that live on $F$ as shown in Figure~\ref{8_20}.

Pick an arc $a \subset F_{0}$ that runs from a point of $A \times \{0\}$ to the point where the twisted band is added in the formation of the $(2,1)$-cable.  Choose an arc $b \subset F_{1/2}$ similarly corresponding to $B \times \{1/2\}$. In addition the arcs should be chosen so that $A, B$, and the projections of $a,b$ to $F$ have no interior intersections.  Define $C$ to be the band sum, in $S$, of $A \times \{0\}$ and $B \times \{1/2\}$ along an arc consisting of $a, b$, and an arc that runs across the twisted band.  

Since $A$ and $B$ are non-separating in $F$,   $C$ does not separate when restricted to either $F_{0}$ or $F_{1/2}$ 
and  Theorem~\ref{H} applies. Combining Theorem~\ref{H}, Proposition~\ref{RV}, Corollary~\ref{H'inS3}, and Proposition~\ref{OT} shows that $H$ is pseudo-Anosov, right-veering with fractional Dehn twist coefficient 1/2, and the associated contact structure is overtwisted.
\end{example}

\begin{thm}\label{counterexamples} There exists an infinite collection of fibred knots in $S^3$ such that the monodromy is pseudo-Anosov, right-veering with fractional Dehn twist coefficient 1/2, and the associated contact structure is overtwisted.
\end{thm}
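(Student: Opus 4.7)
The plan is to apply the construction of Example~\ref{cable_8_20} uniformly to an infinite family of starting knots. The construction there --- taking the $(2,1)$-cable of a pseudo-Anosov fibred knot $K_0 \subset S^3$ with $c(h) = 0$ whose fibre $F$ carries a pair of disjoint non-separating curves $A, B$ realized as cores of a right and a left Hopf band (so the arcs $a, b$ dual to them certify $c(h) = 0$ via Corollary~\ref{c(h)<>0}), and then band-summing lifts of $A, B$ across the twisted band in $P$ to form the Stallings-twist curve $C$ --- makes no essential use of $K_0 = 8_{20}$. Applied to any such starting datum, Theorem~\ref{H}, Corollary~\ref{H'inS3}, Proposition~\ref{RV}, and Proposition~\ref{OT} will yield a fibred knot $K'$ in $S^3$ with pseudo-Anosov monodromy $H = T_C \circ H_0$, $c(H) = 1/2$, right-veering, and with overtwisted associated contact structure.

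To produce an infinite supply of starting knots, I would apply Stallings twists to $K_0 = 8_{20}$. I would choose an unknotted, untwisted, non-separating simple closed curve $\gamma \subset F$ that is disjoint from $a \cup b \cup h(a) \cup h(b)$ and that intersects the invariant laminations of $h$ essentially; such $\gamma$ exists since $F$ has positive genus and the excluded set is contained in a finite $1$-complex. For each integer $n$, let $K_n$ be the knot in $S^3$ obtained by $\tfrac{1}{n}$-surgery along $\gamma$. By Proposition~\ref{tauqofh}, $K_n$ is fibred in $S^3$ with fibre $F$ and monodromy $h_n = T_\gamma^n \circ h$. Since $\gamma$ is disjoint from $a \cup h(a)$, we have $h_n(a) = h(a)$, and similarly $h_n(b) = h(b)$, so $a$ and $b$ still witness right- and left-veering for $h_n$; thus $c(h_n) = 0$ by Corollary~\ref{c(h)<>0}. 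The standard Thurston/Long--Morton criterion then makes $h_n$ pseudo-Anosov for all sufficiently large $|n|$. Finally, by Thurston's hyperbolic Dehn surgery theorem applied to the (hyperbolic) link complement $S^3 \setminus (K_0 \cup \gamma)$, the complements of the $K_n$ have pairwise distinct volumes for cofinitely many $n$, so the $K_n$ are pairwise distinct.

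Applying the construction of the first paragraph to each $K_n$ then yields an infinite family $\{K_n'\}$ of fibred knots in $S^3$ satisfying all four conclusions of the theorem. The $K_n'$ will be pairwise distinct because $(2,1)$-cabling is injective on knot types: the JSJ decomposition of the exterior of $K_n'$ contains an essential torus whose inside is the exterior of the companion $K_n$, so $K_n$ can be recovered from $K_n'$.

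The main obstacle will be in the second paragraph: guaranteeing that a single $\gamma$ simultaneously (i) avoids the veering witnesses (so that $c(h_n) = 0$), (ii) fills together with the invariant foliations of $h$ (so that $h_n$ remains pseudo-Anosov for large $|n|$), (iii) is unknotted in $S^3$ with $tw_\gamma(F) = 0$ (so that the surgery stays in $S^3$ by Proposition~\ref{tauqofh}), and (iv) produces pairwise non-homeomorphic surgered complements. Each individual requirement is generic in $F$; the only real work is verifying their simultaneous realizability, which is a finite combinatorial check on the fibre and invariant laminations of $8_{20}$.
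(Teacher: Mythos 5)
Your first paragraph correctly extracts the essential content of Example~\ref{cable_8_20}: the construction needs only a pseudo-Anosov fibred knot $K_0$ whose fibre $F$ carries two disjoint Hopf band cores of opposite sign (these give disjoint non-separating $A,B$, the witnessing arcs $a,b$ certifying $c(h)=0$ via Corollary~\ref{c(h)<>0}, and the Stallings-twist curve $C$ in the cabled fibre). From that point on, however, your route diverges from the paper's and contains a real gap. The paper produces the needed starting data by an explicit citation: nontorus fibred $2$-bridge knots of genus $\ge 2$ have fibres that are linear plumbings of Hopf bands encoded by $L/R$-words of even length $\ge 4$ (Schubert, Gabai--Kazez); any non-adjacent $L$ and $R$ in the word give the required disjoint Hopf bands of opposite sign. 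This is an infinite family with pseudo-Anosov monodromy essentially for free. You instead try to manufacture an infinite family by performing Stallings twists on the fibre of $8_{20}$ itself, and the entire burden is then carried by the claim that a suitable twisting curve $\gamma$ exists.

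That existence claim is where the argument breaks down. You assert that each of (i)--(iv) is ``generic in $F$,'' but condition (iii) --- that $\gamma$ is unknotted in $S^3$ and has $tw_\gamma(F)=0$ --- is a constraint on the embedding of $\gamma$ in $S^3$, not on its isotopy class in the abstract surface $F$. It is neither generic nor guaranteed to be compatible with avoiding a prescribed $1$-complex; for a general fibred knot, Stallings-twist curves on the fibre may not exist at all. You give no argument that one exists on the fibre of $8_{20}$ disjoint from $a\cup b\cup h(a)\cup h(b)$ and filling with the invariant laminations, and calling this ``a finite combinatorial check'' is an assertion, not a proof. There is also an omission: for the cabling construction to apply to $K_n$, you need $A$ and $B$ to \emph{remain} Hopf band cores of opposite sign in the twisted embedding of $F$, which requires $\gamma$ (and the compressing disk it bounds) to be disjoint from annular neighborhoods of $A$ and $B$, not merely from the arcs $a,b,h(a),h(b)$; this stronger disjointness is never imposed. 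In short, the reduction to the two-Hopf-band input is correct and matches the paper, but the mechanism you propose for producing infinitely many such inputs is unsupported, whereas the paper sidesteps all of these issues by pointing to the explicit $2$-bridge family.
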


\begin{proof} It is enough, by the construction of Example~\ref{cable_8_20}, to produce knots with pseudo-Anosov monodromy whose fibres contain two disjoint Hopf bands of opposite signs.  One such family of examples are the nontorus fibred 2-bridge knots of genus 2 and higher.  The fibres of such knots correspond to words in L and R that use both letters and have even length \cite{Sch, GK}.  The letters correspond to Hopf bands plumbed together in a vertical stack.  As long as the surface has genus at least 2, the word will have length at least 4.  The desired Hopf bands correspond to a non-adjacent pair of letters L and R which necessarily exist.  
\end{proof}

If Conjecture~\ref{main1} is true, then none of the examples of Theorem~\ref{counterexamples} and Example~\ref{cable_8_20} can be destabilized, and thus they would all provide additional counterexamples to the conjecture of \cite{HKM1}.

\end{document}